\newfont{\cyr}{wncyr10 scaled 1100}
\newfont{\cyrr}{wncyr9 scaled 1000}
\theoremstyle{plain}
\newtheorem{thm}{Theorem}[section]
\newtheorem{prop}[thm]{Proposition}
\newtheorem{lemma}[thm]{Lemma}
\newtheorem{corol}[thm]{Corollary}
\newtheorem*{ThmA}{Theorem A}
\theoremstyle{definition}
\newtheorem{conjecture}[thm]{Conjecture}
\newtheorem{defn}[thm]{Definition}
\newtheorem{assumption}[thm]{Assumption}
\theoremstyle{remark}
\newtheorem{remark}[thm]{Remark}
\newtheorem*{intro-question}{Question}
\newtheorem{example}[thm]{Example}
\newcommand{\A}{\mathbb A}
\newcommand{\Q}{\mathbb Q}
\newcommand{\Z}{\mathbb Z}
\newcommand{\R}{\mathbb R}
\newcommand{\C}{\mathbb C}
\newcommand{\PP}{\mathbb P}
\newcommand{\F}{\mathbb F}
\newcommand{\T}{\mathbb T}
\newcommand{\cR}{\mathcal R}
\DeclareMathOperator{\Pic}{Pic}
\DeclareMathOperator{\End}{End}
\DeclareMathOperator{\Aut}{Aut}
\DeclareMathOperator{\Autm}{Aut^{\mathrm{mod}}}
\DeclareMathOperator{\Emb}{Emb}
\DeclareMathOperator{\Hom}{Hom}
\DeclareMathOperator{\Gal}{Gal}
\DeclareMathOperator{\GL}{GL}
\DeclareMathOperator{\SL}{SL}
\DeclareMathOperator{\M}{M}
\DeclareMathOperator{\Ta}{Ta}
\DeclareMathOperator{\Jac}{Jac}
\newcommand{\Ne}{N_\mathrm{Eic}}
\newcommand{\Nc}{N_\mathrm{Car}}
\newcommand{\val}{\mathrm{val}}
\newcommand{\rec}{\mathrm{rec}}
\newcommand{\op}{\mathrm{op}}
\definecolor{Indigo}{rgb}{0.2,0.1,0.7}
\definecolor{Violet}{rgb}{0.5,0.1,0.7}
\definecolor{White}{rgb}{1,1,1}
\definecolor{Green}{rgb}{0.1,0.9,0.2}
\newcommand{\lra}{{\longrightarrow }} 
\newcommand{\longmono}{\mbox{$\lhook\joinrel\longrightarrow$}}
\newcommand{\mat}[4]{\left(\begin{array}{cc}#1&#2\\#3&#4\end{array}\right)}
\newcommand{\smallmat}[4]{\bigl(\begin{smallmatrix}#1&#2\\#3&#4\end{smallmatrix}\bigr)}
\newcommand{\dirlim}{\mathop{\varinjlim}\limits}
\newcommand{\invlim}{\mathop{\varprojlim}\limits}
\newcommand{\mf}[1]{{\mathfrak{#1}}}
\newcommand{\cO}{{\mathcal O}}
\begin{document}

\title{Heegner points on Hijikata--Pizer--Shemanske curves}
\date{}
\author{Matteo Longo \and Victor Rotger \and Carlos de Vera-Piquero}

\begin{abstract} 
We study Heegner points on elliptic curves, or more generally modular abelian varieties, coming from uniformization 
by Shimura curves attached to a rather general type of quaternionic orders closely related to those introduced by 
Hijikata--Pizer--Shemanske in the 80's. We address several questions arising from the Birch and Swinnerton-Dyer (BSD) 
conjecture in this general context. In particular, under mild technical conditions, we show the existence of non-torsion 
Heegner points on elliptic curves in all situations in which the BSD conjecture predicts their existence.    
\end{abstract}

\subjclass[2010]{}

\keywords{}

\thanks{During the work on this paper, M.L. was partially supported by PRIN 2010/11
``Arithmetic Algebraic Geometry and Number Theory" and PRAT 2013 ``Arithmetic of Varieties over Number Fields"; 
C. de V.-P. was partially supported by Grant MTM2012-34611 and by the German Research Council via SFB/TR 45; and 
V. R. was partially supported by Grant MTM2012-34611}

\address{M. L.: Dipartimento di Matematica, Universit\`a di Padova, Padova, Italy.}
\email{mlongo@math.unipd.it}
\urladdr{www.math.unipd.it/~mlongo}

\address{V. R.: Departament de Matem\`atiques, Universitat Polit\`ecnica de Catalunya, Barcelona, Spain.}
\email{victor.rotger@upc.edu}
\urladdr{https://www-ma2.upc.edu/vrotger}

\address{C. de V.-P.: Fakult\"at f\"ur Mathematik, Universit\"at Duisburg-Essen, Essen, Germany.}
\email{cdeverapiquero@gmail.com}
\urladdr{https://www.uni-due.de/~adf538d}

\maketitle


\section*{Introduction}

This work grows on an attempt to study Shimura curves and Heegner points in arithmetically interesting 
situations which at present are poorly understood. 

To motivate our study, let us present an example first. Suppose that $E/\Q$ is an elliptic curve of 
conductor $p^2q$, where $p$ and $q$ are two distinct odd primes, and let $K$ be an imaginary quadratic 
field in which $p$ is ramified and $q$ is inert. In this situation one can not construct rational 
points on $E(K)$ using parametrizations by modular curves $X_0(N)$, because the classical Heegner 
hypothesis fails. 

However, in this scenario it may perfectly be the case that the sign of the functional equation satisfied 
by $L(E/K,s)$ be $-1$ (indeed, this holds under certain arithmetic conditions on the local root numbers of 
the functional equation of $L(E/K,s)$ at $p$ and $q$ which are made precise below).  If in addition the 
order of vanishing of $L(E/K,s)$ at $s=1$ is one, then the Birch and Swinnerton-Dyer conjecture predicts 
the existence of a rational point in $E(K)$ generating $E(K)\otimes\Q$. One expects to construct points of 
infinite order in $E(K)$ using parametrizations by Shimura curves $X_U$ attached to the quaternion algebra 
of discriminant equal to $pq$ and open compact subgroups $U$ of $(B\otimes\hat \Z)^\times$; note that $U$ 
can not be maximal at $p$ because the conductor of the elliptic curve is divisible by $p^2$. 

Nevertheless, in this setting one still has a rich 
theory of local quaternionic orders whose level is divisible by $p^2$ and optimal embeddings of quadratic orders 
of $K$ into them; this theory has been developed by Pizer \cite{Pizer}, and then extended in greater 
generality by Hijikata--Pizer--Shemanske in \cite{HPS1}. Shimura curves attached to these orders play an important role in 
what follows, motivating the title of this paper. 

One of the main motivations that led us to work on this project  is that these curves can be $p$-adically uniformized by the 
$p$-adic rigid analytic space corresponding to the first (abelian) covering of the Drinfel'd tower over the $p$-adic 
upper half plane $\mathcal H_p$. This rigid analytic space has an explicit description (see \cite{Tei}) which can be used 
to study $p$-adic aspects of Heegner points, including their connection to Iwasawa theory and $p$-adic $L$-functions, as in the case where the elliptic curve 
has multiplicative reduction and can be uniformized by Drinfel'd upper half plane (cf. \cite{BDMTcurves}, 
\cite{BD-Inv}, \cite{BD-Duke}). 

It would also be highly interesting to extend the theory of Stark--Heegner 
points in this context (starting with the foundational paper \cite{Dar}, and developed in \cite{BD-Hida}, \cite{BD-Rat},
\cite{Das}, \cite{BDD}, \cite{Green}, \cite{LRV1}, \cite{LRV2}, \cite{LV}). Such a generalization is however not straight-forward, essentially because the jacobian varieties of the Shimura curves referred to above have additive reduction at $p$ (as opposed to having toric reduction, which is a crucial feature in the above approaches). We regard this as an exciting obstacle to overcome rather than a forbidding difficulty, and this  note aims to settle the first step towards this program that we hope to pursue in the near future.

In any case let us stress from the beginning that, independently of any eventual applications of our work building on the $p$-adic uniformization of elliptic curves, in this article we address 
much more general questions related to the existence of Heegner points than those coming from the specific example 
described above (and, therefore, use much more general orders than those introduced in \cite{Pizer} and \cite{HPS1}). Indeed, many of our arguments involving Euler systems and applications to the conjecture of Birch and Swinnerton-Dyer rely on the following apparently na\"ive question:

\begin{intro-question}
Given an elliptic curve $E/\Q$, an imaginary quadratic field $K$ and an anticyclotomic character $\chi$ of 
$\Gal(K^\mathrm{ab}/K)$ factoring through the ring class field $H_c$ of conductor $c\geq1$ of $K$, under which conditions there exist non-trivial Heegner points in $E(H_c)$?
\end{intro-question}

Let us now describe in more detail the main results of this article and their implications to the conjecture of Birch and Swinnerton-Dyer. 

As above, let $E/\Q$ be an elliptic curve of conductor $N$, $K$ be an imaginary quadratic field and 
\[
\chi: G_K = \Gal(\bar K/K) \lra \C^\times
\]
be a character of finite order. 
We assume throughout that $\chi$ is {\em anticyclotomic}, meaning that  
$\chi(\tau \sigma \tau^{-1}) = \chi^{-1}(\sigma)$ for any $\sigma\in G_K$ and 
$\tau \in G_\Q \setminus G_K$. The abelian extension cut out by $\chi$ is then a 
ring class field associated to some order $R_c$ in $K$ of conductor $c=c(\chi)\geq 1$. Let 
$H_c$ denote the corresponding abelian extension, determined by the isomorphism 
$\Gal(H_c/K)\simeq\Pic(R_c)$ induced by the Artin map. 

Let $L(E/K,\chi,s)$ denote the Rankin $L$-series  associated to the twist of $E/K$ by $\chi$. 
Since $\chi$ is anticyclotomic, the motive associated to $L(E/K,\chi,s)$ is Kummer self-dual and 
this implies that the global root number $\varepsilon(E/K,\chi)$ of $L(E/K,\chi,s)$ is either $+1$ 
or $-1$. Assume for the remainder of the article that 
\[
\varepsilon(E/K,\chi) = -1,
\]
hence in particular $L(E/K,\chi,s)$ vanishes at the central critical point $s=1$. Define
\[
(E(H_c)\otimes\C)^\chi := \{x\in E(H_c)\otimes\C: \sigma(x)=\chi(\sigma)x, \forall \sigma\in\Gal(H_c/K)\}.
\]
In this situation, the Galois equivariant version of the Birch--Swinnerton-Dyer conjecture predicts that 
\[
L'(E/K,\chi,1)\neq 0 \stackrel{?}{\Longleftrightarrow} \dim_\C\left(E(H_c)\otimes\C\right)^\chi=1.
\]

In this paper we are concerned with the implication ``$\Rightarrow$''. The well-known strategy to prove this 
implication, after Kolyvagin's fundamental work \cite{Kol}, is to exploit Euler systems of Heegner points on $E$ 
arising from Heegner points on modular curves $X_0(N)$, via a parametrization $$\pi_E: X_0(N)\rightarrow E$$ whose 
existence is a consequence of Wiles's Modularity Theorem \cite{Wi}, \cite{TW}, \cite{BCDT}. More generally, 
in order to enlarge the source of rational points on elliptic curves, one can use uniformizations of our fixed 
elliptic curve $E$ by Jacobians $\Jac(X_{U})$ of Shimura curves $X_{U}$ associated to indefinite quaternion 
algebras $B/\Q$ and compact open subgroups $U\subseteq \hat B^\times=(B\otimes \hat\Z)^\times$. 

When $U=\hat{\mathcal R}^\times = (\mathcal R\otimes\hat\Z)^\times$ for some order $\mathcal R$ of $B$,
we simply write $X_{\mathcal R}$ for $X_{\hat{\mathcal R}^{\times}}$; also, to emphasize the role of $B$, 
we will sometimes write $X_{B,U}$ and $X_{B,\mathcal R}$ for $X_U$ and $X_{\mathcal R}$, respectively. 

Section 1 is devoted to introduce an explicit family of special orders $\cR$ in $B$ which shall play a central 
role in our work. These orders are determined by local data at the primes of bad reduction of the elliptic curve 
$E$, following classical work of Hijikata, Pizer and Shemanske that apparently did not receive the attention it 
justly deserved.

In section 2 we study the Shimura curves $X_{\cR}$ associated to the above mentioned Hijikata--Pizer--Shemanske 
orders and work out explicitly the Jacquet--Langlands correspondence for these curves, which allows 
us to dispose of a rich source of modular parametrizations of the elliptic curve $E$. For any integer $c$, 
there is a (possibly empty) collection of distinguished points on $X_{\mathcal R}$, called \emph{Heegner points 
of conductor $c$}. The set of Heegner points of conductor $c$ on $X_{\mathcal R}$ is in natural correspondence 
with the set of conjugacy classes of optimal embeddings of the quadratic order $R_c$ in the quaternion order 
$\mathcal R$, and we denote it $\mathrm{Heeg}(\mathcal R,K,c)$. We say that a point in $E(H_c)$ is a 
\emph{Heegner point of conductor $c$} associated to the order $\cR$ if it is the image of a Heegner point 
of conductor $c$ in $X_{\mathcal R}$ for some uniformization map 
\[
\pi_E: \Jac(X_{\mathcal R})\rightarrow E
\] 
defined over $\Q$. The corresponding set of Heegner points is denoted $\mathrm{Heeg}_E(\mathcal R,K,c)$.

In section 3 we perform a careful and detailed analysis of the rather delicate and involved theory of optimal 
embeddings of quadratic orders into Hijikata--Pizer--Shemanske orders. Combining all together this allows to 
prove the main result of this article. A slightly simplified version of this result is the following. The main 
virtue of the statement below with respect to previous results available in the literature is that it is both 
{\em general} (removing nearly all unnecessary hypothesis on divisibility and congruence relations among $N$, 
$D$ and $c$) and {\em precise} (pointing out to a completely explicit Shimura curve).

\begin{ThmA}
Let $E/\Q$ be an elliptic curve of conductor $N$ not divisible neither by $2^3$ nor by $3^4$. Let $K$ be an imaginary quadratic field of discriminant $-D$ and $\chi$ be
an arbitrary anticyclotomic character of conductor $c\geq 1$. Assume that $\varepsilon(E/K,\chi)=-1$. Then 
\begin{enumerate}
\item there exists an explicit Hijikata--Pizer--Shemanske order $\cR=\cR(E,K,\chi)$ for which the set of 
Heegner points $\mathrm{Heeg}_E(\mathcal R,K,c)$ in $E(H_c)$ is non-empty. 
\item If $L'(E/K,\chi,1)\neq 0$ and $E$ does not acquire CM over 
any imaginary quadratic field contained in $H_c$, then $\dim_\C\left(E(H_c)\otimes\C\right)^\chi=1$.
\end{enumerate}
\end{ThmA}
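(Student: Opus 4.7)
My plan for part (1) is to read off the correct HPS order $\mathcal{R}=\mathcal{R}(E,K,\chi)$ from the local factorization
\[
\varepsilon(E/K,\chi) \;=\; \varepsilon_{\infty}(E/K,\chi)\cdot\prod_{\ell}\varepsilon_{\ell}(E/K,\chi).
\]
Since $K$ is imaginary and $\chi$ is anticyclotomic, $\varepsilon_{\infty}(E/K,\chi)=-1$, so the assumption that the global sign equals $-1$ prescribes an even finite set of places at which the Tunnell--Saito local invariant picks out the non-split quaternion algebra, yielding an indefinite quaternion algebra $B/\mathbb{Q}$. The local HPS type of $\mathcal{R}_\ell$ at each finite prime is then dictated by the local conductor exponent $v_\ell(N)$, the splitting behaviour of $\ell$ in $K$, and the conductor of $\chi_\ell$, matched according to the Jacquet--Langlands correspondence worked out in Section~2 so that the new vector of $\pi_{E,\ell}$ sits inside the correct local representation of $B_\ell^\times$. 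The mild hypothesis $2^3\nmid N$ and $3^4\nmid N$ is precisely what keeps the local situation inside the HPS classification of Section~1. The optimal-embedding analysis of Section~3 then guarantees that each $\mathcal{R}_\ell$ admits optimal embeddings of $R_c\otimes\mathbb{Z}_\ell$, which glue into a global optimal embedding $\iota\colon R_c\hookrightarrow\mathcal{R}$ by the standard local-to-global principle for quaternion orders. Composing the associated degree-zero Heegner divisor with the modular parametrization $\pi_E\colon\Jac(X_\mathcal{R})\twoheadrightarrow E$ supplied by Section~2 produces the desired point in $\mathrm{Heeg}_E(\mathcal{R},K,c)\subset E(H_c)$.

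For part (2), the plan is the classical two-sided attack via Gross--Zagier--Zhang and Kolyvagin. The Gross--Zagier--Zhang height formula, adapted to the Shimura curve $X_\mathcal{R}$ through the explicit Jacquet--Langlands of Section~2, equates the N\'eron--Tate height of the $\chi$-component of the Heegner point produced in part (1) with a non-zero multiple of $L'(E/K,\chi,1)$; the non-vanishing hypothesis thus shows that this $\chi$-component is non-torsion, establishing $\dim_{\mathbb{C}}(E(H_c)\otimes\mathbb{C})^\chi\geq 1$. The matching upper bound follows from Kolyvagin's Euler system machinery: varying the conductor to $c\ell$ along Kolyvagin primes $\ell$ and using the norm-compatibility of Heegner points in the tower of HPS Shimura curves, the resulting derivative classes annihilate enough of the relevant Selmer group to force $\dim_{\mathbb{C}}(E(H_c)\otimes\mathbb{C})^\chi\leq 1$. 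The non-CM hypothesis over imaginary quadratic subfields of $H_c$ is precisely what guarantees that the residual Galois representations $\bar\rho_{E,\ell}$ have image large enough for the Chebotarev step in Kolyvagin's descent to go through.

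The principal obstacle lies in part (1): one must match the local triple $(v_\ell(N),\,\ell\text{ in }K,\,v_\ell(c))$ against the full list of HPS orders at $\ell$, simultaneously verifying the local Jacquet--Langlands compatibility and the existence of optimal embeddings through the rather intricate counts of Section~3. The exclusion of $2^3\mid N$ and $3^4\mid N$ reflects the boundary of this classification, where wildly ramified local representations would fall outside the HPS framework and the optimal-embedding enumeration would break down. Once part (1) is settled, part (2) reduces to checking that both the Gross--Zagier--Zhang formula and the Euler-system construction transport from Eichler to HPS orders; this extension is made possible because the test vectors and the Hecke actions on $X_\mathcal{R}$ are compatible with those on the Eichler-type analogue, as guaranteed by the Jacquet--Langlands picture of Section~2.
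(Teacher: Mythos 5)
Your overall architecture matches the paper's: determine the (indefinite) quaternion algebra $B$ from the local root numbers $\varepsilon_\ell(E/K,\chi)$ versus $\eta_{K,\ell}(-1)$, select a Hijikata--Pizer--Shemanske order via the explicit Jacquet--Langlands correspondence, verify local optimal embeddings of $R_c\otimes\Z_\ell$, glue by the local--global principle (type number one), map through $\Jac(X_{\cR})\twoheadrightarrow E$, and invoke Gross--Zagier--Zhang plus Kolyvagin for part (2) (the paper outsources the latter entirely to Nekov\'a\v r's theorem, so your sketch of that machinery is consistent with what is actually cited). However, there is a genuine gap at the decisive step of part (1). You write that ``the optimal-embedding analysis of Section~3 then guarantees that each $\mathcal{R}_\ell$ admits optimal embeddings of $R_c\otimes\Z_\ell$.'' Section~3 guarantees no such thing: it only gives necessary and sufficient conditions on the pair $(m,n)=(\val_\ell(c),\val_\ell(N_{\cR}))$ for the local embedding set to be non-empty. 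The entire content of the paper's proof (the six lemmas preceding Theorem \ref{main_thm}) is the verification, case by case via Tunnell's local $\varepsilon$-factor formulas for supercuspidal and Steinberg representations, that the root-number condition defining whether $\ell\in\Sigma(E,\chi)$ \emph{forces} $(m,n)$ into the admissible range --- and, crucially, that when it does not, one can pass to a proper suborder $\cR'\subseteq\cR_{\min}$ of strictly larger level $n'>n$ at $\ell$ (keeping the conductor equal to $c$, not replacing $c$ by a multiple $c'$) for which the embedding criteria are met. As literally stated, your claim is false for the minimal order: for instance, if $p\mid\Delta$, $\pi_{E,p}$ is supercuspidal, $p$ ramifies in $K$, $\val_p(N)=2$ and $p\mid c$, then $R_c\otimes\Z_p$ does not embed optimally into $R_2(L_p)$, and one must descend to $R_{2(m+1)}(L_p)$. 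This matching of Tunnell's formulas against the embedding tables is where the hypotheses $2^3\nmid N$ and $3^4\nmid N$ actually intervene (they feed into Assumption \ref{assumption2N}, ruling out e.g.\ the genuinely failing case $\val_3(N)=4$, $3$ inert, $\val_3(c)=1$); your explanation that these hypotheses keep one ``inside the HPS classification'' is not accurate, since the HPS classification covers all levels --- the obstruction is the compatibility between root numbers and embedding numbers, not the classification itself. You correctly identify this matching as ``the principal obstacle,'' but identifying the obstacle is not the same as overcoming it: without carrying out the case analysis, the proposal does not establish part (1), and part (2) depends on it.
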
 

This theorem is proved in the last section of the article, where we also provide a more general statement for 
elliptic curves and prove a similar but weaker result (Theorem \ref{Main_Thm1}) for modular abelian varieties; 
we close the paper with a refined conjecture on the existence of Heegner points on modular abelian varieties.

Theorem A follows from Theorem \ref{main_thm} below, where the condition that $2^3$ and $3^4$ do not divide 
$N$ is considerably relaxed into the much weaker (but more involved) Assumption \ref{assumption2N}. Namely, 
we can also prove the above theorem in all cases where $3^5$ divides $N$; there is only one case with 
$3^4$ dividing $N$ exactly that we can not treat with our arguments (it is a case in which $3$ is inert in 
$K$ and the $p$-power conductor of $\chi$ is equal to $1$). In addition, if $\val_2(N)\geq 4$ is even, then 
the newform $f \in S_2(\Gamma_0(N))$ associated to $E$ by modularity is a twist of a newform 
$f' \in S_2(\Gamma_1(N'))$ for some level $N'$ with $\val_2(N')<\val_2(N)$ (\cite[Theorem 3.9]{HPS2}) and in 
these cases it seems possible to investigate the existence of Heegner points with different methods (namely, 
prove the existence of Heegner points for the abelian variety corresponding to $f'$ and then twisting back to 
show the existence of Heegner points on the elliptic curve). Therefore, if we assume that the form is 
\emph{primitive}, in the sense that it is not a twist of a form of lower level, then we can exclude form our 
discussion all cases in which $\val_2(N)$ is even and greater or equal than $4$. Finally, if we further assume 
the condition that the $2$-component $\pi_{f,2}$ of the automorphic representation $\pi_f$ associated with 
$f$ has minimal conductor among all its twists by quasi-characters of $\Q_2^\times$, then the only cases 
we need to exclude in Theorem A above are the following, where $\Delta$ is the discriminant of the quaternion 
algebra:
\begin{equation}\label{MissingCase}\tag{Missing Cases}
\begin{array}{lc}
\text{$\val_3(N) = 4$, $3\nmid \Delta$, $3$ is inert in $K$ and $\val_3(c)=1$;}\\
\text{$\val_2(N)\geq 3$, $2\nmid \Delta$ and $2$ is ramified in $K$.}\end{array}
\end{equation}
See Corollary \ref{coro-final} for a complete statement. However, let us stress that these missing cases, 
even if they seem to be isolated in the case of elliptic curves,  they are not rare at all in the more general 
context of modular abelian varieties associated with modular forms of level $\Gamma_0(Mp^r)$ with $p$ any 
prime number and $r$ arbitrarily large. 

Statement (2) in the above theorem follows from (1) and well-known Kolyvagin type arguments which are spelled 
out in detail in \cite{Nek}. Namely, given
\begin{itemize}
\item a parametrization of the elliptic curve $E$ by a Shimura curve $X_{B,U}$,
\item a Heegner point $x$ in $X_{B,U}(K^\mathrm{ab})$, rational over a subfield $K(x)\subseteq K^\mathrm{ab}$, and
\item a character $\chi$ factoring through $\Gal(K(x)/K)$,
\end{itemize} 
Nekov\'a\v r shows that if the special value of the derivative of the complex $L$-function at $s=1$ is nonzero, 
then the dimension of the $\C$-vector space $(E(K(x))\otimes\C)^\chi$ is equal to $1$, provided that $E$ does not acquire 
CM over any imaginary quadratic field contained in $K(x)^{\ker(\chi)}$. 

In some sense, our Theorem A \emph{reverses} 
the logical order of the result in \cite{Nek}, starting with a character of a given conductor and asking for a Heegner point 
rational over the subextension of $K^\mathrm{ab}$ cut out by that character. Therefore, the whole focus of our work is on 
statement (1) of the above theorem. More precisely, this work grows out from a systematic study of existence conditions for 
Heegner points in all scenarios in which the BSD conjecture predicts the existence of a non-zero element in 
$(E(H_c)\otimes\C)^\chi$. To understand the flavour of this work, it is therefore important to stress that \emph{we do 
not require any condition on the triplet $(N,D,c)$}, besides the above restrictions at $2$ and $3$ (cf. also Assumption 
\ref{assumption2N}). Quite surprisingly, the interplay between local root numbers, non-vanishing of the first derivative 
of the $L$-function and the theory of optimal embeddings shows that these conditions match perfectly and, in all relevant cases, 
Heegner points do exist.

Our approach consists in three steps. Firstly, we need to find a suitable candidate Shimura curve $X_{\mathcal R}$ equipped with a non-constant map 
$\Jac(X_{\mathcal R})\rightarrow E$. For this, we need to specify the 
ramification set of the quaternion algebra $B$, which is prescribed as usual in terms of local root numbers 
of $\varepsilon(E/K,\chi)$ and $\eta_K(-1)$, where $\eta_K$ is the quadratic character associated with 
$K/\Q$ (see for example \cite{YZZ}). 

Secondly, we need to specify a convenient order $\mathcal R$, and this is
an application of a fine version of the Jacquet--Langlands theory, using works of Pizer, Chen and others. 

Thirdly, having fixed our Shimura curve $X_{\mathcal R}$, we need to prove the existence of Heegner points of 
conductor $c$, and this follows, as hinted above, from a careful study of relations between local root numbers 
and optimal embeddings. The main new ingredient in this context is the adaptation of the 
theory of local orders in quaternion division algebras developed by Hijikata--Pizer--Schemanske.

In closing this introduction, we would like to mention three interesting papers that have recently seen the light, two of them by Kohen--Pacetti \cite{PK}, 
\cite{PK2} and one by Cai--Chen--Liu \cite{CCL}, addressing similar but non-overlapping problems in the case of Cartan level 
structure. As opposed to these works, we rather focus on the case where the modular parametrization of the elliptic curve is given by a Shimura curve associated to an order in a quaternion {\em division} algebra. In addition, it is worth remarking that while \cite{PK}, 
\cite{PK2} and \cite{CCL} study Euler systems of Heegner points in a scenario where these are known to exist, our goal is to show that (at least in the case 
of elliptic curves with local $p$-adic representations not too ramified for $p=2$ and $p=3$) 
it is possible to construct a suitable Shimura curve giving rise to a non-trivial system of Heegner points of prescribed conductor in all situations in which the BSD Conjecture predicts their existence, covering many new cases.

\section{Shimura curves}

Let $\hat{\Z}$ denote the profinite completion of $\Z$, and write $\hat R := R \otimes_{\Z} \hat{\Z}$ 
for every $\Z$-algebra $R$. Fix an integer $\Delta > 1$, which is assumed to be square-free and the 
product of an even number of primes, and let $B$ be the indefinite rational quaternion algebra of reduced 
discriminant $\Delta$. Write $\hat B = B\otimes_{\Q} \hat{\Q}$ for the finite adelization of $B$. We also 
fix a maximal order $\cO$ in $B$; recall that such an order is unique up to conjugation by an element in 
$B^{\times}$. Finally, we shall fix an isomorphism $B_{\infty}:=B\otimes_{\Q}\R \to \M_2(\R)$, under 
which $B^{\times}$ might be seen as a subgroup of $\GL_2(\R)$.

\subsection{Shimura curves}

Let $\mathcal H^{\pm} = \C - \R = \PP^1(\C) - \PP^1(\R)$ be the union of the upper and lower 
complex half planes, which might be identified with the set of $\R$-algebra homomorphisms 
$\Hom(\C,\M_2(\R))$. The action of $B^{\times}$ by linear fractional transformations on $\mathcal H^{\pm}$ 
corresponds under this identification to the action of $B^{\times}$ by conjugation on $\Hom(\C,\M_2(\R))$.

For any compact open subgroup $U$ of $\hat{\cO}^{\times}$, one can consider the topological space of 
double cosets 
\begin{equation}\label{defnXU} 
X_U = \left(U \backslash \hat{B}^{\times}\times \Hom(\C,\M_2(\R))\right)/B^{\times},
\end{equation}
where notice that $U$ acts naturally on $\hat{B}^{\times}$ by multiplication on the left and $B^{\times}$ 
acts both on $\hat{B}^{\times}$ (diagonally) and on $\Hom(\C,\M_2(\R))$. By the work of Shimura and Deligne, 
$X_U$ admits the structure of an algebraic curve over $\Q$ and a canonical model, which we shall still 
denote by $X_U/\Q$. This will be referred to as the Shimura curve associated with $U$.

Although $X_U$ is connected over $\Q$, it might not be in general geometrically connected. Indeed, the set 
of geometric connected components of $X_U$ (that is, the set of connected components of 
$\overline{X}_U := X_U\times_{\Q}\bar{\Q}$) is identified with the finite set of double cosets 
$U \backslash  \hat{B}^{\times} / B^{\times}$. Such components are defined over an abelian extension of 
$\Q$, and via the reciprocity map from class field theory the action of 
$\Gal(\Q^{ab}/\Q) \simeq \hat{\Z}^{\times}$ on them is compatible under the isomorphism  
\[
U \backslash  \hat{B}^{\times} / B^{\times} \, \, \stackrel{\simeq}{\lra} \, \, \mathrm n(U) \backslash \hat{\Q}^{\times} / \Q^{\times} = \mathrm n(U) \backslash \hat{\Z}^{\times}
\]
induced by the reduced norm $\mathrm n$ on $\hat{B}^{\times}$ (by strong approximation) with the natural 
action of $\hat{\Z}^{\times}$ on $\mathrm n(U) \backslash \hat{\Z}^{\times}$.

From the very definition in \eqref{defnXU}, one can naturally define a group of automorphisms of $X_U$, 
which are often called {\em modular}. Namely, if $N(U)$ denotes the normalizer of $U$ in $\hat B^{\times}$, 
then left multiplication by an element $b \in N(U)$ induces an automorphism $\lambda(b): X_U \to X_U$, given 
on points by the rule 
\[
 \lambda(b): [g,f] \, \, \longmapsto \, \, [bg,f].
\]
Here, $[g,f]$ denotes the point on $X_U$ corresponding to a pair $(g,f) \in \hat{B}^{\times}\times \Hom(\C,\M_2(\R))$. 
It is immediate to check that $\lambda(b)$ defines the identity on $X_U$ if and only if $b \in U\Q^{\times}$. 
The group $\Autm(X_U)$ of {\em modular automorphisms} on $X_U$ is then defined to be the group of all the 
automorphisms obtained in this way, so that
\[
 \Autm(X_U) := U\Q^{\times} \backslash N(U).
\]

If $U=\hat{\mathcal S}^{\times}$ is the group of units in the profinite completion of some order 
$\mathcal S \subseteq \cO$, then we shall write $X_{\mathcal S} := X_{\hat{\mathcal S}^{\times}}/\Q$ 
for the {\em Shimura curve associated with the order $\mathcal S$}. In this case, the set of 
geometric connected components of $X_{\mathcal S}$ is identified with the class group $\Pic(\mathcal S)$ 
of $\mathcal S$. 

\begin{remark}
The most common setting in the literature is when $\mathcal S = \mathcal S_{N^+}$ is an Eichler 
order of level $N^+$ in $\cO$, where $N^+\geq 1$ is an integer prime to $N^-:=\Delta(B)$. In this 
case, the Shimura curve $X_{N^+,N^-} := X_{\mathcal S}/\Q$ associated with $\mathcal S$ is not 
only connected but also geometrically connected, and its group $\Autm(X_{N^+,N^-})$ of modular 
automorphisms is the group of Atkin-Lehner involutions, which are indexed by the positive divisors 
of $N^+N^-$. Further, $X_{N^+,N^-}/\Q$ is the coarse moduli space classifying abelian surfaces with 
quaternionic multiplication by $\cO$ and $N^+$-level structure.
 
When $N^-=1$ (so that $B$ is the split quaternion algebra $\M_2(\Q)$), a case which we exclude in 
this paper, the Shimura curve $X_{N^+,1}/\Q$ is the affine modular curve $Y_0(N^+)$ obtained as a 
quotient of the upper half plane by the congruence subgroup $\Gamma_0(N^+)$, whose compactification 
by adding finitely many cusps is the usual modular curve $X_0(N^+)/\Q$.
\end{remark}

In this article, we will be working with Shimura curves associated with certain suborders of $\cO$ 
which are not Eichler orders, but rather with more general orders that for example might have non-trivial 
level at primes dividing $\Delta(B)$. The special class of quaternion order we shall be dealing with is 
described in the next section.

\subsection{Choice of quaternion orders}

Let $p$ be a rational prime and let $B_p$ be a quaternion algebra over $\Q_p$. The object of this 
section is introducing several families of local quaternion orders in $B_p$ which in turn will give 
rise to a fauna of Shimura curves that will serve as the appropriate host of the Heegner systems we 
aim to construct.

Assume first that $B_p=D_p$ is the unique (up to isomorphism) quaternion division algebra over $\Q_p$ 
and let $\cO_p$ be the unique maximal order in $D_p$. If $L_p$ is a quadratic extension of $\Q_p$ and 
$\nu\geq 1$ is an integer, one can define the (local) quaternion order
\[
 R_{\nu}(L_p) = \cO_{L_p} + \pi_p^{\nu-1}\cO_p,
\]
where $\cO_{L_p}$ denotes the ring of integers of $L_p$ and $\pi_p$ is a uniformizer element in $\cO_p$. 
Such local orders are studied in detail by Hijikata, Pizer and Shemanske in \cite{HPS1}. Notice that 
$R_1(L_p)$ coincides with the maximal order $\cO_p$, regardless of the choice of $L_p$. Further, if 
$L_p'$ is another quadratic extension of $\Q_p$ with $L_p \simeq L_p'$, then $R_{\nu}(L_p)$ and $R_{\nu}(L_p')$ 
are conjugated by an element in $D_p^\times$. For $\nu\geq 2$, the order $R_{\nu}(L_p)$ is characterized 
as the unique order in $D_p$ containing $\cO_{L_p}$ and $\pi_p^{\nu-1}\cO_p$ but not containing 
$\pi_p^{\nu-2}\cO_p$. 

\begin{remark}
 If $p$ is odd and $L_p$ is the unique unramified quadratic extension, then $R_{2r+1}(L_p)=R_{2r+2}(L_p)$ for 
 every $r\geq 0$, thus one can think of the orders $R_{\nu}(L_p)$ as being indexed by odd positive integers. 
 These orders were studied in \cite{Pizer-orders2}, where they are called orders of level $p^{2r+1}$. 
 When $p=2$ or $L_p$ is ramified, then $R_{\nu+1}(L_p)\subsetneq R_{\nu}(L_p)$ for every $\nu\geq 1$, and 
 the order $R_{\nu}(L_p)$ depends in general on the choice of $L_p$. However, $R_2(L_p)$ is independent 
 of $L_p$, and therefore one can speak of the unique order of level $p^2$ in $D_p$ (cf. \cite{Pizer}).
\end{remark}

Assume now that $B_p = \M_2(\Q_p)$ is the split quaternion algebra over $\Q_p$. In this algebra the 
order $\M_2(\Z_p)$ is maximal and it is the only one up to conjugation by elements in $\GL_2(\Q_p)$. 
Below we introduce, for each positive integer, two different $\GL_2(\Z_p)$-conjugacy classes 
of suborders in $\M_2(\Z_p)$, which therefore define two different $\GL_2(\Q_p)$-conjugacy classes of 
orders in $\M_2(\Q_p)$. Let $\nu\geq 1$ be an integer.

\begin{itemize}

\item The subring of $\M_2(\Z_p)$ consisting of those matrices $\smallmat abcd$ in $\M_2(\Z_p)$ 
such that $p^\nu \mid c$ is commonly referred to as the {\em standard} Eichler order of level $p^{\nu}$ 
in $\M_2(\Z_p)$. An Eichler order of level $p^{\nu}$ is then any order in $\M_2(\Q_p)$ which is conjugated 
to the standard one. We shall denote any of them by $R_{\nu}^{\mathrm{Eic}}$, whenever only its 
conjugacy class is relevant in the discussion.
%

\item Let $\Q_{p^2}$ denote the unique unramified quadratic extension of $\Q_p$, and $\cO=\Z_{p^2}$ be its 
valuation ring. Then $\cO/p^{\nu}\cO$ is a finite, free, commutative $(\Z/p^{\nu}\Z)$-algebra of rank $2$ 
with unit discriminant. In particular, the choice of a basis for $\cO/p^{\nu}\cO$ gives an embedding of 
$(\cO/p^{\nu}\cO)^{\times}$ into $\GL_2(\Z/p^{\nu}\Z)$. Its image $C_{\mathrm{ns}}(p^{\nu})$ is then 
well-defined up to conjugation. The inverse image of $C_\mathrm{ns}(p^\nu)\cup\left\{\smallmat 0000\right\}$ 
under the reduction modulo $p^{\nu}$ map $\M_2(\Z_p)\rightarrow \M_2(\Z/p^{\nu}\Z)$ is an order of $\M_2(\Z_p)$, 
commonly referred to as a {\em non-split Cartan order} of level $p^{\nu}$. We shall denote any of the orders 
arising in this way simply by $R_{\nu}^{\mathrm{Car}}$, at any time that it is only the conjugacy class that 
matters in the discussion.

\end{itemize}

Now let $B/\Q$ be an indefinite quaternion algebra of discriminant $\Delta=\Delta(B)$ as before. 

\begin{defn}\label{def-ord} 
Let $N_{\mathrm{Eic}}\geq 1$ and $N_{\mathrm{Car}}\geq 1$ be such that $(N_{\mathrm{Eic}},N_{\mathrm{Car}}) = 1$ 
and $(N_{\mathrm{Eic}}\cdot N_{\mathrm{Car}},\Delta)=1$. For each prime 
$p\mid N_{\mathrm{Eic}}\cdot N_{\mathrm{Car}}$, set $\nu_p$ to be the $p$-adic valuation of 
$N_{\mathrm{Eic}}\cdot N_{\mathrm{Car}}$. For each prime $p\mid \Delta$, choose an integer 
$\nu_p \geq 1$ and a quadratic extension $L_p$ of $\Q_p$. 
An order $\mathcal R$ in $B$ is said to be {\em of type 
$T=(N_{\mathrm{Eic}};N_{\mathrm{Car}};\{(L_p,\nu_p)\}_{p\mid \Delta})$} if the following conditions 
are satisfied:  
\begin{enumerate}
\item If $p \nmid N_{\mathrm{Eic}} N_{\mathrm{Car}}\Delta$, then $\mathcal R\otimes_{\Z}\Z_p$ is a 
maximal order in $B\otimes_{\Q}\Q_p\simeq \M_2(\Q_p)$.
\item If $p \mid N_{\mathrm{Eic}}$, then $\mathcal R\otimes_{\Z}\Z_p$ is conjugate to 
$R_{\nu_p}^{\mathrm{Eic}}$ in $B\otimes_{\Q}\Q_p\simeq \M_2(\Q_p)$. 
\item If $p \mid N_{\mathrm{Car}}$, then $\mathcal R\otimes_{\Z}\Z_p$ is conjugate to 
$R_{\nu_p}^{\mathrm{Car}}$ in $B\otimes_{\Q}\Q_p\simeq \M_2(\Q_p)$. 
\item For every $p\mid \Delta$, $\mathcal R\otimes_{\Z}\Z_p \simeq R_{\nu_p}(L_p)$ in 
$B\otimes_{\Q}\Q_p\simeq D_p$.
\end{enumerate}
\end{defn}

Fix for the rest of this section an order $\mathcal R$ in $B$ of type $T=(\Ne;\Nc;\{(L_p,\nu_p)\}_{p\mid \Delta})$ 
as in Definition \ref{def-ord}. Define the \emph{level} of $\cR$ to be the integer 
$N_{\mathcal R} := \Ne\cdot \Nc^2\cdot N_{\Delta}$, where we put $N_{\Delta}:=\prod_{p\mid \Delta}p^{\nu_p}$. 
If $\nu_p=1$ for every $p\mid \Delta$, we will sometimes refer to $\mathcal R$ as a {\em Cartan--Eichler 
order} of type $(\Ne;\Nc)$ (and level $\Ne\cdot \Nc^2$).

Associated with $\mathcal R$, we have the Shimura curve $X_{\mathcal R}/\Q$ defined as in the previous 
paragraph. The Shimura curve $X_{\mathcal R}$ is projective and smooth over $\Q$, but in general it is 
not geometrically connected. The reduced norm on $\mathcal R^{\times}$ is locally surjective onto 
$\Z_{\ell}^{\times}$ at every prime $\ell \nmid \Delta$ (both Eichler and Cartan orders in indefinite 
rational quaternion algebras have class number one), but however the reduced norm on the local orders 
$R_{\nu_p}(L_p)$ is not necessarily surjective onto $\Z_p^{\times}$ when restricted to the invertible 
elements. Despite of this, it is easy to see that $[\Z_p^{\times}:\mathrm n(R_{\nu_p}(L_p)^{\times})]$ 
is either $1$ or $2$. Thus if we set
\begin{equation}\label{defC}
 \mathcal C := \{p\mid \Delta\text{ prime: } \mathrm n(R_{\nu_p}(L_p)^{\times}) \neq \Z_p^{\times} \},
\end{equation}
then the number of connected components of $X_{\mathcal R}\times_{\Q} \bar{\Q}$ is $2^{|\mathcal C|}$. 
If $\Delta$ is odd, or if $\nu_2\leq 1$ in case that $\Delta$ is even, it follows from \cite[Theorem 3.11]{HPS1} that 
\[
 \mathcal C = \{p\mid \Delta\text{ prime: } \nu_p > 1\text{, } L_p \text{ ramified} \}.
\]
The behaviour at $p=2$ is a bit more involved, but one still has a characterization of whether 
$\mathrm n(R_{\nu_2}(L_2)^{\times})$ has index $1$ or $2$ in $\Z_2^{\times}$ (see \cite[Theorem 3.11, 3) 
and 4)]{HPS1} for details). Furthermore, if $\Delta$ is odd, the connected components of 
$X_{\mathcal R}\times_{\Q} \bar{\Q}$ are defined over a polyquadratic extension: the number 
field obtained as the compositum of the quadratic extensions $\Q(\sqrt{p^*})$ for $p\in \mathcal C$, 
where $p^*=(\tfrac{-1}{p})p$.

\begin{example}\label{ex2.5}
Suppose $\Delta=pq$ with $p$ and $q$ distinct odd primes, and let $L_p$ be a quadratic ramified 
extension of $\Q_p$. Consider an order $\mathcal R$ of type $(M;1;\{(L_p,2),(L_q,1)\})$ and level 
$N=Mp^2q$. As noticed in the above remark, this order does not depend on the choice of $L_p$. The 
Shimura curve $X_{\mathcal R}/\Q$ has two geometric connected components defined over the quadratic 
field $\Q(\sqrt{p^*})$, and they are conjugated by the Galois action (in particular, they are 
isomorphic over $\Q(\sqrt{p^*})$). There is a unique Eichler order $\mathcal S$ containing $\mathcal R$, 
and the morphism of Shimura curves $X_{\mathcal R} \to X_{\mathcal S}$ induced by the inclusion 
$\hat{\mathcal R}^{\times} \subseteq \hat{\mathcal S}^{\times}$ is cyclic of degree $p+1$. Modular 
cusp forms in $S_2(\Gamma_0(N))$ which are $N/M$-new and not principal series at $p$ lift via the 
Jacquet--Langlands correspondence to quaternionic modular forms on the Shimura curve 
$X_{\mathcal R}$ (see below).
\end{example}

\section{Modular forms and the Jacquet--Langlands correspondence}

We fix throughout this section an indefinite quaternion algebra $B$ of discriminant $\Delta$ and 
an order $\cR$ of $B$ of type $T=(\Ne;\Nc;\{(L_p,\nu_p)\}_{p\mid \Delta})$ and level 
$N_{\mathcal R}=\Ne\Nc^2N_{\Delta}$. 

\subsection{Cusp forms with respect to $\mathcal R$}

We identify the Lie algebra of left invariant differential operators on 
$B_{\infty}^{\times} := (B\otimes_{\Q}\R)^{\times} \simeq \GL_2(\R)$ with $\M_2(\C)$, and 
define the differential operators 
\[
X_{\infty} = \mat{1}{\sqrt{-1}}{\sqrt{-1}}{-1}, \, \overline X_{\infty} = \mat{1}{-\sqrt{-1}}{-\sqrt{-1}}{-1}, \, 
W_{\infty} = \frac{1}{2}\mat{0}{-\sqrt{-1}}{\sqrt{-1}}{0}.
\]

\begin{defn}
 Let $k$ be an integer. A cusp form of wight $k$ with respect to $\mathcal R$ is a function 
 \[
 f: (B\otimes_{\Q}\A_{\Q})^{\times} = \hat{B}^{\times}\times\GL_2(\R) \, \, \lra \, \, \C
 \]
 satisfying the following properties:
 \begin{enumerate}
  \item if $g \in (B\otimes_{\Q}\A_{\Q})^{\times}$, then the function $\GL_2(\R) \to \C$ given by 
  $x \mapsto f(xg)$ is of $C^{\infty}$-class and satisfies $W_{\infty}f = (k/2)f$, $\overline{X}_{\infty}f=0$;
  \item for every $\gamma \in B^{\times}$ and every $u \in \hat{\mathcal R}^{\times}\times\R^{>0}$, 
  $f(u g \gamma) = f(g)$.
 \end{enumerate}
The $\C$-vector space of all cusp forms of weight $k$ with respect to $\mathcal R$ will be denoted $S_k(\mathcal R)$.
\end{defn}

The product $\prod_{p\mid \Delta} B_p^{\times}$ acts on the space $S_k(\mathcal R)$ by left translation, 
and through this action one can decompose $S_k(\mathcal R)$ into the direct sum of subspaces on which 
$\prod_{p\mid \Delta} B_p^{\times}$ acts through some admissible representation (with trivial central 
character). More precisely, suppose that  for each $p\mid \Delta$ we are given an irreducible admissible 
representation (with trivial central character) $\rho_p$ of $B_p^{\times}$ whose restriction to 
$\mathcal R_p^{\times}$ is trivial (i.e., $\mathcal R_p^{\times}\subseteq \ker(\rho_p)$). Define 
$\rho := \otimes_{p\mid \Delta}\rho_p$, regarded as a representation of $\prod_{p\mid \Delta}B_p^{\times}$. 
Since the representations $\rho_p$ are finite-dimensional, the integer 
$d_{\rho}:=\dim(\rho)=\prod_{p\mid \Delta} \dim(\rho_p)$ is well-defined.

\begin{defn}
 Let $k$ be an integer, and $\rho$ be a representation as above. A {\em cusp form of weight $k$ with 
 respect to} $(\mathcal R,\rho)$ is a function 
 \[
 f: (B\otimes_{\Q}\A_{\Q})^{\times} = \hat B^{\times}\times \GL_2(\R) \, \, \lra \, \, \C^{d_{\rho}}
 \]
 satisfying the following conditions, for every $g \in (B \otimes_{\Q}\A_{\Q})^{\times}$:
 \begin{enumerate}
  \item for every $\gamma \in B^{\times}$, $f(g\gamma) = f(g)$;
  \item for every $b \in \prod_{p\mid \Delta} B_p^{\times}$, $f(bg) = \rho(b)f(g)$;
  \item for every prime $\ell \nmid \Delta$ and $u \in \mathcal R_{\ell}^{\times}$, $f(u g) = f(g)$;
  \item the function $\GL_2(\R) \to \C^{d_{\rho}}$ given by $x \mapsto f(xg)$ is of $C^{\infty}$-class and satisfies 
  $W_{\infty}f = (k/2)f$, $\overline{X}_{\infty}f = 0$;
  \item  for every $z \in \hat{\Q}^{\times}\times\R^{\times}$, $f(gz) = f(g)$.
 \end{enumerate}
We write $S_k(\cR,\rho)$ for the $\C$-vector space of cusp forms of weight $k$ with respect to $(\mathcal R,\rho)$.
\end{defn}

The $\C$-vector spaces $S_k(\cR,\rho)$ enjoy the following multiplicity one property:  
\begin{prop}[cf. Prop. 2.14 in \cite{Hida}]\label{mult1}
 If two forms in $S_k(\cR,\rho)$ are common eigenforms of the Hecke operators $T_{\ell}$ for all 
 primes $\ell \nmid N$ with same eigenvalues, then they differ only by a constant factor.
\end{prop}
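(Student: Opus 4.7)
The plan is to translate the statement into the language of automorphic representations of $B^\times(\A_\Q)$ and then invoke strong multiplicity one (via the Jacquet--Langlands correspondence). Given $f\in S_k(\cR,\rho)$, let $\pi_f$ denote the representation of $B^\times(\A_\Q)$ generated by the right translates of $f$. The defining conditions in the definition of $S_k(\cR,\rho)$ ensure that $\pi_f$ decomposes into a finite sum of irreducible cuspidal automorphic representations $\pi = \otimes_v'\pi_v$ of $B^\times(\A_\Q)$ whose archimedean component is the holomorphic discrete series of weight $k$ (forced by the conditions $W_\infty f = (k/2)f$, $\overline X_\infty f=0$), whose components $\pi_p$ at primes $p\mid\Delta$ contain the prescribed $\rho_p$ (forced by $f(bg)=\rho(b)f(g)$), and whose components $\pi_\ell$ at primes $\ell\nmid \Delta$ admit a nonzero $\cR_\ell^\times$-fixed vector (forced by $f(ug)=f(g)$ for $u\in \cR_\ell^\times$).

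First I would show that the hypothesis on Hecke eigenvalues implies that both forms generate the \emph{same} irreducible automorphic representation. This is where strong multiplicity one enters: by Jacquet--Langlands, $\pi_f$ transfers to an automorphic representation $\pi_f^{\mathrm{JL}}$ on $\GL_2(\A_\Q)$, and two such cuspidal representations with coinciding local parameters at all but finitely many places must be globally isomorphic. The Hecke eigenvalues at primes $\ell\nmid N$ (which in particular do not divide $\Delta\cdot\Ne\cdot\Nc$) determine the unramified local components $\pi_\ell^{\mathrm{JL}}$, and hence determine $\pi^{\mathrm{JL}}$, and hence $\pi$.

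Once both forms are known to lie in the same irreducible $\pi=\otimes_v'\pi_v$, it remains to prove a local multiplicity one statement: the space of vectors in $\pi_v$ satisfying the prescribed local condition is one-dimensional at every finite place. At primes $\ell\nmid\Delta\cdot \Ne\cdot\Nc$ the representation $\pi_\ell$ is spherical and the Iwahori-spherical line is unique. At Eichler primes $p\mid \Ne$ one appeals to Casselman's theory of the newform, which gives a unique (up to scalar) $\cR_p^\times$-invariant vector once the conductor exponent matches. At Cartan primes $p\mid \Nc$, and at primes $p\mid\Delta$ where the local condition is governed by $\rho_p$, the corresponding local multiplicity-one statement is precisely the content of the local theory developed by Pizer, Hijikata and Shemanske (this will be treated in detail in later sections). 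Combining these local statements with the global rigidity above yields that any two normalized eigenforms with matching Hecke eigenvalues agree up to a scalar, which is the conclusion.

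The main obstacle is the local uniqueness at the primes dividing $\Delta\Nc$, since at these places the representation is ramified and the local vector is not the spherical one; the essential input is that the prescribed local type $(\cR_p,\rho_p)$ picks out a one-dimensional subspace of $\pi_p$, a fact that is built into the very choice of the orders $R_\nu(L_p)$ and non-split Cartan orders in Section 1.
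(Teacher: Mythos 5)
The paper does not actually prove this proposition; it is quoted directly from \cite[Prop.~2.14]{Hida}, so there is no internal argument to compare yours against. Your plan is the standard one that underlies Hida's statement --- pass to the automorphic representation generated by the form, use Jacquet--Langlands and (strong) multiplicity one for $\GL_2$ to pin down the global representation from the eigenvalues at $\ell\nmid N$, and then check that the prescribed local data cuts out a line in each local component --- and in outline it is correct. Two refinements would make it airtight. First, strong multiplicity one alone only tells you the two forms generate \emph{isomorphic} irreducible automorphic representations; to conclude they span the \emph{same} irreducible subspace of the cuspidal spectrum of $B^\times(\A_\Q)$ (without which proportionality does not follow) you also need multiplicity one in the discrete spectrum of $B^\times$, which for quaternion algebras over $\Q$ is again a consequence of Jacquet--Langlands together with multiplicity one for $\GL_2$. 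Second, at the primes $p\mid\Delta$ the local uniqueness is not something to defer to the Hijikata--Pizer--Shemanske theory: since $\pi_p$ is an irreducible finite-dimensional representation of $B_p^\times$ and the transformation law $f(bg)=\rho_p(b)f(g)$ identifies the relevant space with $\Hom_{B_p^\times}(\pi_p,\rho_p)$, Schur's lemma gives dimension at most one outright --- this is precisely the point of the vector-valued normalization of $S_k(\cR,\rho)$.

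The genuine soft spot is the local multiplicity one at the split primes dividing $\Ne\Nc$. At Eichler primes your appeal to Casselman is correct only ``once the conductor exponent matches,'' as you say; if the Eichler level strictly exceeds the local conductor the fixed space has dimension greater than one (oldforms), and the statement as literally written would fail. So an implicit newness/minimality hypothesis on the forms is being used and should be made explicit. At non-split Cartan primes the dimension of the $C_{\mathrm{ns}}(p^{\nu})$-fixed vectors in $\pi_p$ is a genuine computation --- it is not ``built into the very choice of the orders'' --- and the correct reference is the trace-formula analysis of Chen \cite{chen2} (or the embedding-number computations of \cite{HPS1}), again under the matching-conductor hypothesis. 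With these caveats spelled out, your route is the right one.
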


The subspace of $S_k(\mathcal R)$ on which $\prod_{p\mid \Delta} B_p^{\times}$ acts through an admissible 
representation $\rho$ as above is isomorphic to $S_k(\cR,\rho)^{d_{\rho}}$, hence one deduces that 
\begin{equation}\label{SkR:sum-rho}
S_k(\mathcal R) \simeq \bigoplus_{\rho} S_k(\cR,\rho)^{d_{\rho}},
\end{equation}
where $\rho$ ranges over the representations $\rho = \otimes_{p\mid \Delta}\rho_p$ as above, satisfying 
$\mathcal R_p^{\times}\subseteq \ker(\rho_p)$.

\begin{remark} 
The automorphic approach sketched before is related to the more classical point of view as follows. 
Let $h=h(\cR)$ denote the class number of $\mathcal R$ and choose elements $a_i \in \hat{B}^{\times}$, 
$i=1,\dots,h$, such that 
\[
\hat{B}^{\times} =\coprod_{i=1}^h \hat{\mathcal R}^{\times}a_iB^{\times}.
\]
Consider the discrete subgroups of $\SL_2(\R)$ defined by 
\[
\Gamma_i := B^{\times}_+ \cap a_i^{-1}\hat{\mathcal R}^{\times}a_i \quad \quad (i=1,\dots,h),
\]
where $B^{\times}_+$ is the subgroup of units of positive reduced norm (we may write 
$B^{\times}_+ = B^{\times}\cap \GL_2^+(\R)$ using our identification of $B\otimes_{\Q}\R$ 
with $\M_2(\R)$). If we denote by $S_k(\Gamma_i)$ the $\C$-vector space of cusp forms of 
weight $k$ with respect to the group $\Gamma_i$, then there is an isomorphism of complex 
vector spaces: 
\[
\coprod_{i=1}^h S_k(\Gamma_i)\stackrel{\simeq}{\lra} \, \, S_k(\mathcal R).
\]
\end{remark}

\subsection{Jacquet--Langlands}

The space $S_k(\cR)$ can be equipped with a standard action of Hecke operators and Atkin--Lehner 
involutions, described for example in \cite{Hida}. We have the following version of the Jacquet--Langlands 
correspondence: 

\begin{thm}[cf. Prop. 2.12 in \cite{Hida}]\label{thm:JL}
 There is a Hecke equivariant injection of $\C$-vector spaces 
 $$
 S_k(\cR,\rho) \longmono S_k(\Gamma_0(\Ne\Nc^2 N_{\rho})),
 $$
 where $N_{\rho}$ is the conductor of $\rho$.
\end{thm}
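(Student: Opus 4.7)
The plan is to deduce the injection from the global Jacquet--Langlands correspondence between automorphic representations of $B^\times(\A_\Q)$ and $\GL_2(\A_\Q)$, combined with a local conductor computation at each bad prime that shows the image lands in forms of level $\Ne \Nc^2 N_\rho$.

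First, I would translate the classical cusp forms in $S_k(\cR,\rho)$ into automorphic data. A non-zero eigenform $f \in S_k(\cR,\rho)$ generates an automorphic representation $\pi_B = \otimes_v \pi_{B,v}$ of $B^\times(\A_\Q)$: the archimedean component is a discrete series of weight $k$ (encoded by conditions involving $W_\infty$ and $\overline X_\infty$), at primes $p \mid \Delta$ the component $\pi_{B,p}$ factors through $\rho_p$ by condition (2) of the definition, and at primes $\ell \nmid \Delta$ the invariance under $\cR_\ell^\times$ in condition (3) pins down $\pi_{B,\ell}$ up to the local conductor exponent of $\cR_\ell$. Since $B$ is a division algebra at a finite, even set of places and split at infinity, the global Jacquet--Langlands correspondence produces a cuspidal automorphic representation $\pi = JL(\pi_B) = \otimes_v \pi_v$ of $\GL_2(\A_\Q)$ with the same archimedean component, with $\pi_v \simeq \pi_{B,v}$ at places where $B$ splits, and with $\pi_p = JL(\pi_{B,p})$ at ramified places. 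Multiplicity one on the $\GL_2$-side and Proposition \ref{mult1} on the quaternion side imply this association is injective on eigenforms, and standard linearisation extends this to the whole space $S_k(\cR,\rho)$.

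The second step is the local conductor computation, which determines the level of the image newform and is the technical heart of the argument. At primes $p \nmid \Ne \Nc \Delta$ the order $\cR_p$ is maximal, so $f$ is invariant under $\GL_2(\Z_p)$ and $\pi_p$ is unramified, contributing a factor $1$. At primes $p \mid \Ne$ the order $\cR_p$ is Eichler of level $p^{\nu_p}$; invariance under its unit group forces $\pi_p$ to have an Iwahori-type fixed vector and thus conductor exponent $\nu_p$, contributing $p^{\nu_p}$ to the level. At primes $p \mid \Nc$ the order $\cR_p$ is a non-split Cartan of level $p^{\nu_p}$; here one uses the well-known fact that a principal series or supercuspidal representation of $\GL_2(\Q_p)$ with a non-trivial $R_{\nu_p}^{\mathrm{Car},\times}$-fixed vector has conductor exponent exactly $2\nu_p$, contributing $p^{2\nu_p}$. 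Finally, at primes $p \mid \Delta$ the local Jacquet--Langlands transfer sends the finite-dimensional $\rho_p$ of the compact-mod-centre group $B_p^\times/\cR_p^\times$ to a local representation of $\GL_2(\Q_p)$ whose conductor is precisely the local component of $N_\rho$, by definition (or by direct computation using the local orders $R_{\nu_p}(L_p)$ of Hijikata--Pizer--Shemanske, which is where \cite{HPS1} enters). Multiplying these contributions yields level $\Ne \Nc^2 N_\rho$.

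The final step is to verify Hecke equivariance: at every $\ell \nmid N := \Ne\Nc^2\Delta$ the operator $T_\ell$ on both sides corresponds to the same double coset of $\GL_2(\Z_\ell)$ in $\GL_2(\Q_\ell)$, so the map intertwines these operators; and by Proposition \ref{mult1}, this suffices to conclude that the assignment $f \mapsto $ new vector of $\pi$ descends to an injective linear map between the full spaces. The main obstacle I expect is the conductor calculation at the non-split Cartan places (the factor $p^{2\nu_p}$ rather than $p^{\nu_p}$ is what makes the level $\Ne \Nc^2 N_\rho$ and not $\Ne \Nc N_\rho$), and the careful matching at primes $p \mid \Delta$, where one must ensure that the conductor of $JL(\rho_p)$ agrees with the conductor $N_\rho$ defined on the quaternion side; this is precisely where the explicit theory of $R_{\nu_p}(L_p)$ developed in \cite{HPS1} is needed.
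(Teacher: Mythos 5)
The paper offers no proof of this statement, citing Hida's Proposition 2.12 instead; your sketch is the standard Jacquet--Langlands argument (global transfer for square-integrable-at-$\Delta$ representations, local conductor bounds at each bad prime, multiplicity one on both sides) and is essentially the argument behind the cited reference. One small correction: at the Eichler and Cartan places the fixed-vector conditions only show that the conductor exponent is \emph{at most} $\nu_p$ (resp.\ $2\nu_p$, since $R_{\nu_p}^{\mathrm{Car},\times}$ contains the principal congruence subgroup of level $p^{\nu_p}$), not exactly that value, but an upper bound is all that is needed to land the associated newform inside $S_k(\Gamma_0(\Ne\Nc^2 N_{\rho}))$, so the argument goes through.
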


Combining Theorem \ref{thm:JL} with \eqref{SkR:sum-rho} we can embed the space $S_k(\mathcal R)$ 
into a space of classical modular cusp forms
\begin{equation}
\label{JL} 
\mathrm{JL}:
S_k(\mathcal R) \longmono  \bigoplus_{\rho} S_k(\Ne\Nc^2 N_{\rho})^{d_{\rho}}.\end{equation}
The multiplicities $d_{\rho}$ can be described explicitly: cf.\,\cite[\S 5]{Carayol}.

\begin{example}\label{ex1} 
Suppose $p\mid \Delta$ is an odd prime. The quaternion algebra $B_p = B\otimes_{\Q}\Q_p$  is equipped 
with a natural decreasing filtration $\cO_p^{\times}(i)$ defined by setting
\[
\cO_p^{\times}(0) = \cO_p^{\times} \quad \text{and} \quad \cO_p^{\times}(i) = 1 + \varpi_p^i\cO_p^{\times},
\]
where $\cO_p$ is the unique maximal order in $B_p$ and $\varpi_p$ is a local uniformizer. If $\rho_p$ 
is an admissible irreducible representation of $B_p^{\times}$, then its conductor is by definition 
$p^{n+1}$, where $n\geq 0$ is the smallest integer such that $\cO_p^{\times}(n)$ lies in the kernel 
of $\rho_p$. In particular, observe that the conductor is at least $p$. Thus, if $p^2\nmid N_{\mathcal R}$ 
then $\mathcal R_p^{\times} = \cO_p^{\times}$ is precisely the group of units in the local maximal order 
at $p$, thus the admissible irreducible representations $\rho_p$ we are concerned with all have 
conductor $p$. If $p^2\mid N_{\mathcal R}$ and $p^3\nmid N_{\mathcal R}$, we have 
$\cO_p^{\times}(1) \subseteq \mathcal R_p^{\times} \subseteq \cO_p^{\times}$, and therefore the conductor 
of the admissible irreducible representations $\rho_p$ might be either $p$ or $p^2$. For each prime 
$p\mid \Delta$, the dimension of $\rho_p$ is determined by its {\em minimal conductor}, which by 
definition is the smallest conductor of the representations $\rho_p\otimes \chi$, as $\chi$ ranges 
over the characters of $\Q_p^{\times}$.  Let $N^{\mathrm{min}}_{\rho_p}$ be the minimal conductor 
of $\rho_p$. By \cite[\S 5]{Carayol}, if $N^{\mathrm{min}}_{\rho_p}=p^a$ with $a\in \{1,2\}$, then 
$d_{\rho_p} = a$.
\end{example}

The above arguments give us a Hecke equivariant inclusion of $S_k(\mathcal R)$ into a direct sum of 
spaces of classical modular cusp forms. In order to circumvent the problem of explicitly determining 
the multiplicities $d_{\rho}$, we use Proposition \ref{propJL} below, which benefits from an explicit 
version of Eichler trace formula due to Hijikata, Pizer and Shemanske. 

For the reader's convenience, we recall the classification of Jacquet--Langlands lifts given in 
\cite{HPS2}, and from now on we focus on the weight $2$ case. So let $f\in S_2(\Gamma_0(N_f))$ be a 
weight $2$ modular cusp form, and assume that $N_f = p^sM$ for some prime $p$ and integers $s, M \geq 1$, 
with $p\nmid M$. Suppose $\phi$ is a Jacquet--Langlands lift of $f$ which is realized on the definite 
quaternion algebra $B^{(p)}$ of discriminant $p$. We want to determine the level of $\phi$, by which 
we mean the local $p$-type of the order $\mathcal R$ of $B^{(p)}$ used to define its level structure. 
Such local order is of the form $R_n(L)$, for some positive integer $n$ and quadratic extension $L/\Q_p$, 
and it is determined as follows:


\begin{enumerate}

\item if $p$ is odd: 

\begin{enumerate}

\item $s$ odd: $L$ is unramifield and $n=s$ (\cite[Theorem 8.5]{HPS2}).  

\item $s$ even: $L$ is ramified (any of the two ramified extensions) and $n=s$ (\cite[Proposition 8.8 Case D]{HPS2}). 

\end{enumerate} 

\item if $p=2$: 

\begin{enumerate}

\item $s=1$: $L$ is the unramified quadratic extension of $\Q_2$ and $n=1$ (\cite[Proposition 8.8 Case C]{HPS2}). 

\item $s$ odd, $s\geq 3$: $L$ is unramifield and $n=s$ (\cite[Theorem 8.5]{HPS2}). 

\item $s=2$: $L=\Q_2(\sqrt{3})$ or $L=\Q_2(\sqrt{7})$ and $n=2$ \cite[Proposition 8.8 Case F Eq. (8.17)]{HPS2}. 

\item $s$ even, $s\geq4$: In this case, $f$ is a twist by a non-trivial character of conductor $2^{s/2}$ of one of the 
previous cases (\cite[Theorem 3.9]{HPS2}). 


\end{enumerate} 
\end{enumerate}

\begin{prop}\label{propJL}
Let $f\in S_2(\Gamma_0(N_f))$ be a newform and fix a set $\Sigma$ of even cardinality consisting of primes 
$\ell\mid N_f$ such that the local admissible representation $\pi_{f,\ell}$ of $\GL_2(\Q_\ell)$ attached to 
$f$ is square-integrable. Let $B/\Q$ be the indefinite quaternion algebra of discriminant 
$\Delta=\prod_{\ell \in \Sigma} \ell$, and $\Ne, \Nc$ be positive integers such that $(\Ne,\Nc)=1$, 
$\Ne\Nc^2 \mid N_f$ and $(\Ne\Nc, \Delta) = 1$. 

Then there exists an order $\cR_\mathrm{min}\subset B$ of type 
$T_\mathrm{min} = (\Ne;\Nc; \{(L_p,\nu_p)\}_{p\mid\Delta})$ such that $f$ lifts to a quaternionic modular 
form on $S_2(\mathcal R_\mathrm{min})$ having the same Hecke eigenvalues for the Hecke operators $T_{\ell}$ 
at primes $\ell \nmid N_f$. Further, for each prime $p\mid \Delta$ the data $(L_p,\nu_p)$ depends only on 
$\val_p(N_f)$.
\end{prop}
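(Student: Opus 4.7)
The plan is to combine the global Jacquet--Langlands correspondence with the local classification of square-integrable representations of $D_p^\times$ in terms of Hijikata--Pizer--Shemanske orders, as recalled just before the statement. I envisage four steps.

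First, I would apply global Jacquet--Langlands to the cuspidal automorphic representation $\pi_f = \bigotimes_v \pi_{f,v}$ attached to $f$. Because $\Sigma$ has even cardinality and each $\pi_{f,\ell}$ with $\ell\in\Sigma$ is square-integrable, one obtains an automorphic representation $\pi' = \bigotimes_v \pi'_v$ of $B^\times(\A_\Q)$ with trivial central character such that $\pi'_v \simeq \pi_{f,v}$ for every $v\notin\Sigma$ and $\pi'_\ell$ is the local JL transfer of $\pi_{f,\ell}$ for $\ell\in\Sigma$. Matching of Hecke eigenvalues at primes $\ell\nmid N_f$ is then built into the construction.

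Second, at each $p\mid\Delta$ the local component $\pi'_p$ is a finite-dimensional admissible representation of $D_p^\times$ with trivial central character, whose isomorphism class is determined by $\val_p(N_f)$ (together with the square-integrability of $\pi_{f,p}$ and the triviality of its central character). I would then invoke the HPS classification \cite[Theorem~8.5 and Proposition~8.8]{HPS2} recalled above to attach to each $p\mid\Delta$ a pair $(L_p,\nu_p)$ for which the group $R_{\nu_p}(L_p)^\times$ admits a non-zero fixed vector in $\pi'_p$; by construction this pair depends only on $\val_p(N_f)$.

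Third, I would patch these local prescriptions together with the standard Eichler data at primes dividing $\Ne$, the non-split Cartan data at primes dividing $\Nc$, and maximal orders elsewhere, into a single global order $\cR_\mathrm{min}\subset B$ of type $T_\mathrm{min}=(\Ne;\Nc;\{(L_p,\nu_p)\}_{p\mid\Delta})$. This step uses the local-to-global dictionary for orders in an indefinite rational quaternion algebra: strong approximation together with the fact that each of the local orders involved forms a single $B_p^\times$-conjugacy class forces any compatible collection of local orders to arise from a global order, unique up to conjugation in $B^\times$. Finally, the vectors in $\pi'$ fixed by $\hat{\cR}_\mathrm{min}^\times$ outside $\Delta$ and transforming under $\rho=\bigotimes_{p\mid\Delta}\pi'_p$ at the primes dividing $\Delta$ yield, via the automorphic-to-classical translation described in the previous subsection, a non-zero element $\phi\in S_2(\cR_\mathrm{min},\rho)\subseteq S_2(\cR_\mathrm{min})$ with the required Hecke eigenvalues; the multiplicity-one statement of Proposition \ref{mult1} guarantees uniqueness up to a scalar.

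The main obstacle I expect is the case $p=2$ with $\val_2(N_f)\geq 4$. Here $\pi_{f,2}$ is a character twist of a representation of minimal conductor at most $2^2$ by \cite[Theorem~3.9]{HPS2}, and pinning down the correct pair $(L_2,\nu_2)$ requires tracing the twist through the local Jacquet--Langlands correspondence and verifying that the resulting HPS order is genuinely minimal within its conjugacy class. The remaining entries of the table follow in a rather direct way from the recalled Hijikata--Pizer--Shemanske classification, so the bulk of the work is to organize a clean case analysis at $p=2$ while keeping the dependence of $(L_p,\nu_p)$ on $\val_p(N_f)$ transparent.
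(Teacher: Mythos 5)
Your overall strategy coincides with the paper's: apply the global Jacquet--Langlands correspondence (the paper cites \cite[Theorem 10.2]{Gel}) to transfer $\pi_f$ to $B^\times$, read off the local data $(L_p,\nu_p)$ at $p\mid\Delta$ from the Hijikata--Pizer--Shemanske recipe recalled just before the statement, and patch the local orders into a global $\cR_{\mathrm{min}}$ using the fact that each local order forms a single conjugacy class. Your attention to the case $p=2$, $\val_2(N_f)\geq 4$ is also well placed: the paper's recipe there simply declares $f$ to be a twist of a lower-level form via \cite[Theorem 3.9]{HPS2}, and Remark \ref{remarkJL} records that $\val_2(N_{\mathrm{min}})$ may then drop below $\val_2(N_f)$.

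The one genuine gap is at the primes dividing $\Nc$. The type $T_{\mathrm{min}}$ prescribes a non-split Cartan order of level $p^{\nu_p}$ at each such $p$, so producing a nonzero $\phi\in S_2(\cR_{\mathrm{min}})$ with the eigenvalues of $f$ requires knowing that $\pi_{f,p}$ possesses a nonzero vector fixed by the unit group of that Cartan order; this is not a formal consequence of the hypothesis $\Ne\Nc^2\mid N_f$, and it does not follow from multiplicity one or from strong approximation. The paper supplies this input by invoking the trace formula of \cite[Sec. 6]{chen2} (noting that Chen's argument, written for $p\neq 2$, must be extended to $p=2$). Your proposal lists ``the non-split Cartan data at primes dividing $\Nc$'' as part of the patching step but never verifies the existence of the corresponding local fixed vectors, so this verification needs to be added; everything else in your outline matches the paper's argument.
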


The subscript `min' in $\cR_\mathrm{min}$ refers to the minimal level for primes dividing $\Delta$, 
determined by the classification explained above. We note that if $\nu_p$ is odd then $L_p$ is unramified, 
and if $\nu_p$ is even then $L_p$ is ramified. 

\begin{proof}
Since $\pi$ is square integrable at all primes in $\Sigma$, \cite[Theorem 10.2]{Gel} implies the existence 
of an automorphic form $\pi'$ on the algebraic group of invertible elements of the indefinite quaternion 
algebra $B$ as in the statement such that $\pi_{\ell}' \simeq \pi_{\ell}$ for all primes $\ell$. To specify 
the order $\cR$ we need to describe $\pi_{\ell}'$ at every prime $\ell$. For primes $\ell \nmid \Nc\Delta$ the 
assertion is obvious. Fix a prime $p \mid \Delta$ and let $B^{(p)}$ be the definite quaternion algebra of 
discriminant $p$. Then Eichler's trace formula in \cite{HPS2} shows the existence of an automorphic form 
$\pi^{(p)}$ for $B^{(p)}$ attached to a specific order $\cR^{(p)}$ of type $(L,\nu(L))$ depending only on 
$\val_p(N_f)$ with $\pi'_p \simeq \pi^{(p)}_p$ (we have sketched above the recipe for $L$ and $\nu(L)$). 
Finally, for primes dividing $\Nc$ a similar argument works using this time the trace formula in 
\cite[Sec. 6]{chen2} (the proof in \cite{chen2} only works for $p\neq 2$, but one can check that
it can be extended to the case $p=2$). 
\end{proof}

\begin{remark}\label{remarkJL}
 Let $N_{\mathrm{min}}$ be the level of the order $\mathcal R_{\min}$ in the proposition. Then observe that 
 $N_{\mathrm{min}}$ divides $N_f$. Even more, we have $\val_p(N_{\mathrm{min}}) = \val_p(N_f)$ for every 
 odd prime $p$. And in case that $N_f$ is even, then $\val_2(N_{\mathrm{min}}) < \val_2(N_f)$ implies that 
 $2 \mid \Delta$ and $\val_2(N_f)$ is even and at least $4$. 
\end{remark}

\begin{example} \label{ex2}
It follows from \cite{Pizer} that, given a primitive (in the sense of \cite[Definition 8.6]{Pizer}) new 
cuspidal eigenform $f$ of level $\Gamma_0(p^2M)$ as in the proposition with $p \nmid M$ an odd prime, the 
subspace of new forms in $S_2(\mathcal R)$ having the same system of Hecke eigenvalues as $f$ (at primes 
outside $N_f = p^2M$) is two-dimensional. So we have a ``multiplicity 2 phenomenon'' as expected from the 
Example \ref{ex1} and Hecke-equivariant monomorphism $\mathrm{JL}$ in \eqref{JL}.
\end{example}

\subsection{Modular parametrizations}

Let $J_{\mathcal R}/\Q$ denote the Jacobian variety of $X_{\mathcal R}$. It is a (principally polarized) abelian 
variety defined over $\Q$, of dimension equal to the genus of $X_{\mathcal R}$. Since $X_{\mathcal R}$ is not in 
general geometrically connected, it follows that $J_{\mathcal R}$ might be not absolutely simple. Recall the following: 

\begin{defn}\label{def-modular}
An abelian variety $A/\Q$ is said to be \emph{modular} if there exists a normalized
newform $f = \sum_{n\geq 1}a_nq^n$ of weight $2$ and level $\Gamma_0(N_f)$ for some $N_f\geq 1$ such that 
\[
L(A,s)=\prod_{\sigma: F \hookrightarrow \bar\Q}L(f^\sigma,s),
\] 
where $F$ stands for the number field generated by the Fourier coefficients of $f$, $\sigma$ ranges over 
the embeddings of $F$ into an algebraic closure of $\Q$ and $f^\sigma=\sum_{n\geq 1}\sigma(a_n)q^n$. 
\end{defn}

\begin{prop}
Suppose that $A/\Q$ is a modular abelian variety associated with a modular form $f=\mathrm{JL}(\phi)$ 
for some $\phi\in S_2(\cR)$. Let $\mathbb I_\phi \subseteq \T$ be the kernel of the ring homomorphism $\T \to \Z$ 
determined by the system of Hecke eigenvalues of $\phi$. Then the quotient abelian variety 
$A_\phi := J_{\mathcal R}/\mathbb I_\phi J_{\mathcal R}$ is isogenous to $A^r$ for some $r\geq 1$. 
\end{prop}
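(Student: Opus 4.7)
The plan is to combine a dimension count on cotangent spaces with a comparison of $\ell$-adic Galois representations, and then invoke Faltings' isogeny theorem to conclude.

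First, I would recall that, after base change to $\C$, the cotangent space of $J_\cR$ at the origin is naturally identified with $S_2(\cR)$ via the standard isomorphisms $\mathrm{Lie}(J_\cR)^\vee \otimes_\Q \C \cong H^0(X_\cR, \Omega^1_{X_\cR/\Q}) \otimes_\Q \C \cong S_2(\cR)$, and that this identification intertwines the Hecke action on $J_\cR$ (acting through correspondences) with the usual adelic Hecke action on $S_2(\cR)$. Dualising the defining short exact sequence $0 \to \mathbb I_\phi J_\cR \to J_\cR \to A_\phi \to 0$ at the level of Lie algebras yields $\mathrm{Lie}(A_\phi)^\vee \otimes_\Q \C \cong S_2(\cR)[\mathbb I_\phi]$, the $\phi$-isotypic subspace of $S_2(\cR)$. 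By the decomposition \eqref{SkR:sum-rho} together with Proposition \ref{mult1}, this subspace has dimension exactly $d_\rho$, where $\rho = \bigotimes_{p\mid\Delta}\rho_p$ is the admissible representation type attached to $\phi$; since $\phi$ itself belongs to this subspace, $d_\rho \geq 1$, so $A_\phi$ is a nonzero abelian variety of dimension $d_\rho$.

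Next, I would control the $\ell$-adic Galois representation on $V_\ell(A_\phi) := T_\ell(A_\phi)\otimes\Q_\ell$ using the quaternionic Eichler--Shimura congruence relations. The action of $\T$ on $J_\cR$ descends to an action on $A_\phi$ factoring through $\T/\mathbb I_\phi \hookrightarrow \Z$, so for every prime $q \nmid N_\cR \ell$ the characteristic polynomial of the Frobenius at $q$ acting on $V_\ell(A_\phi)$ is a power of $X^2 - a_q(\phi) X + q$. Since $\mathrm{JL}(\phi) = f$ and $A$ is associated with $f$ in the sense of Definition \ref{def-modular}, the same polynomial describes the Frobenius at $q$ acting on $V_\ell(A)$. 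Setting $r := \dim A_\phi / \dim A$, it follows that $V_\ell(A_\phi)$ and $V_\ell(A)^r$ have matching Frobenius characteristic polynomials at almost all primes. Faltings' isogeny theorem for abelian varieties over $\Q$ (an isogeny class is determined by its L-function, equivalently by Frobenius polynomials at almost all primes) then forces $A_\phi \sim A^r$ over $\Q$, as desired.

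The main delicate point I anticipate is the dimension count of the first step. Compatibility between the adelic Hecke action on $S_2(\cR)$ and the geometric Hecke correspondences on $J_\cR$ is classical for Eichler orders, but in the general setting of Hijikata--Pizer--Shemanske orders the Shimura curve $X_\cR$ need not be geometrically connected and the Jacquet--Langlands correspondence exhibits multiplicity $d_\rho > 1$ at primes $p \mid \Delta$ at which $\cR$ is not maximal. Both the identification of cotangent spaces with $S_2(\cR)$ and the decomposition of $S_2(\cR)$ into Hecke isotypic components need the full strength of \eqref{SkR:sum-rho} and of the multiplicity one statement in Proposition \ref{mult1} in order to pin down the exact dimension $d_\rho$ of the $\phi$-isotypic subspace.
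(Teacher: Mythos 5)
Your argument reaches the right conclusion and shares its two endpoints with the paper's proof (the quaternionic Eichler--Shimura relation as input, Faltings' isogeny theorem as output), but the middle is genuinely different. The paper does not compute $\dim A_\phi$ at all: it observes that Eichler--Shimura (in Nekov\'a\v r's generality) makes the Galois action on $\Ta_\ell(A_\phi)\otimes\Q_\ell$ annihilated, element by element, by the characteristic polynomial of the corresponding element acting on $\Ta_\ell(A)\otimes\Q_\ell$, and then invokes the Boston--Lenstra--Ribet theorem to conclude directly that the former is a direct sum of copies of the latter; the integer $r$ is simply whatever multiplicity comes out. Your route replaces Boston--Lenstra--Ribet by a comparison of Frobenius characteristic polynomials together with semisimplicity (again from Faltings) and Brauer--Nesbitt/Chebotarev. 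Your cotangent-space computation, while not strictly needed for the statement, is a genuine bonus: it identifies $r$ with $d_\rho$ and in particular supplies the fact that $A_\phi\neq 0$ (equivalently $r\geq 1$), a point the paper's proof leaves implicit.

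The one step you should not wave through is the assertion that the characteristic polynomial of $\Frob_q$ on $V_\ell(A_\phi)$ is a \emph{power} of $X^2-a_q(\phi)X+q$. The Eichler--Shimura relation only tells you that $\Frob_q$ is \emph{annihilated} by this quadratic; if the quadratic has distinct roots $\alpha,\beta$ over $\bar\Q_\ell$, the characteristic polynomial is a priori $(X-\alpha)^a(X-\beta)^b$ with possibly $a\neq b$, which is not a power of the quadratic and would break the matching with $V_\ell(A)^r$. To close this you need an extra input: for instance the functional equation and Weil bounds for the Frobenius characteristic polynomial of the reduction of $A_\phi$ modulo $q$ (its roots occur in pairs $\gamma,\,q/\gamma$ with equal multiplicities, and $\alpha\beta=q$ with $|\alpha|=|\beta|=\sqrt q$ forces $a=b$), or alternatively the absolute irreducibility of $V_\ell(A)$ combined with semisimplicity of $V_\ell(A_\phi)$ --- which is essentially the Boston--Lenstra--Ribet argument the paper uses. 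With that supplement your proof is complete. A minor further remark: Proposition \ref{mult1} gives multiplicity one within a single $S_2(\cR,\rho)$, so to deduce $\dim S_2(\cR)[\mathbb I_\phi]=d_\rho$ from \eqref{SkR:sum-rho} you are implicitly using that the system of Hecke eigenvalues of $\phi$ determines $\rho$ (strong multiplicity one for $\GL_2$ transported through Jacquet--Langlands); this is true but worth saying.
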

\begin{proof}
Let $\ell\nmid N_f$ be a prime and $\varrho: G_{\Q} \to \Aut(\Ta_{\ell}(A)\otimes \Q_{\ell})$ be the $2$-dimensional 
$\ell$-adic Galois representation arising from the natural action of $G_{\Q}$ on the $\ell$-adic Tate module 
$\Ta_{\ell}(A)$ of $A$. Similarly, let $\theta: G_{\Q} \to \Aut(\Ta_{\ell}(A_\phi)\otimes \Q_{\ell})$ be the 
$\ell$-adic Galois representation attached to $A_\phi$. The Eichler--Shimura relations (proved in the required 
generality in \cite{NekovarES}) imply that $\theta(\sigma)$ is annihilated by the characteristic polynomial of 
$\varrho(\sigma)$ for every $\sigma \in G_{\Q}$. Then the Boston--Lenstra--Ribet Theorem \cite{BLR} implies that 
$\Ta_{\ell}(A_\phi)\otimes \Q_{\ell}$ is isogenous to a direct sum of $r$ copies of $\Ta_{\ell}(A)\otimes \Q_{\ell}$ 
for some $r\geq 1$. Finally, Faltings' Isogeny Theorem implies that $A_\phi$ is isogenous to $r$ copies of $A$.
\end{proof}

\begin{example}
Suppose that $\cR$ is of type $(M;1;\{(L_p,2),(L_q,1)\}$ and level $N:=N_{\mathcal R} = Mp^2q$, with $p$ and $q$ 
distinct odd primes as in Example \ref{ex2.5}. Set $F = \Q(\sqrt{p^*})$. Then 
\[
J_{\mathcal R} \times_{\Q} F \sim J_{\mathcal R,1} \times J_{\mathcal R,2},
\]
where $J_{\mathcal R,i}/F$ is the Jacobian variety of $X_{\mathcal R,i}$. Let $\mf S_2(\Gamma_0(N))$ be the 
subspace of $S_2(\Gamma_0(N))$ consisting of primitive newforms. From Example \ref{ex2} we know that there is 
a 2-to-1 Hecke-equivariant morphism of $\C$-vector spaces 
\[
\mf S_2(\mathcal R) \, \, \lra \, \, \mf S_2(\Gamma_0(N)),
\]
where $\mf S_2(\mathcal R)$ is the subspace of modular forms $\phi \in S_2(\mathcal R)$ such that 
$\mathrm{JL}(\phi) \in \mf S_2(\Gamma_0(N))$. By a slight abuse of notation we continue to denote by 
$\mathrm{JL}$ this morphism.

Fix $f\in \mf S_2(\Gamma_0(N))$ and assume that the Fourier coefficients of $f$ belongs to $\Z$. Then the 
abelian variety associated with $f$ is an elliptic curve $E$ of conductor $Mp^2q$. Let $\phi\in \mf S_2(\cR)$ 
be such that $\mathrm{JL}(\phi)=f$ (we have two linearly independent possible choices). The space 
$S_2(\mathcal R)$ of weight $2$ modular forms for $\mathcal R$ is identified with $H^0(X_{\mathcal R},\Omega^1)$, 
which in turn is identified with the tangent space at the identity $T_0(J_{\mathcal R})$ of $J_{\mathcal R}$. 
The subspace $\mf S_2(\mathcal R)$ corresponds then to a subspace of $H^0(X_{\mathcal R},\Omega^1)$, and 
hence to the tangent space $T_0(\mf J_{\mathcal R})$ of an abelian subvariety $\mf J_{\mathcal R}$ of 
$J_{\mathcal R}$. The space of modular forms $\mf S_2(\mathcal R)$ has rank $2$ over the Hecke ring $\T$, and 
from this it follows that $T_0(\mf J_{\mathcal R})/T_0(\mathbb I_\phi\mf J_{\mathcal R})$ has dimension $2$ 
over $\Q$. Hence $A_\phi$ is $2$-dimensional, and therefore $A_\phi\sim E\times E$.
\end{example}

\section{Heegner points}

\subsection{Optimal embeddings}

As in previous sections, $B$ denotes an indefinite rational quaternion algebra of discriminant 
$\Delta=\Delta(B)$. We fix an order $\cR$ and a quadratic field $K$. For each positive integer 
$c$ write $R_c$ for the (unique) order of conductor $c$ in $K$, $R_1$ being the full ring of 
integers of $K$. 

\begin{defn}
 Let $c$ be a positive integer. An embedding from $K$ to $B$, i.e. a $\Q$-algebra homomorphism 
 $f: K \, \to \, B$, is said to be {\em optimal with respect to $\mathcal R/R_c$} if the equality
 $$
 f(K) \cap \mathcal R = R_c
 $$
 holds. Since $f$ is determined by its restriction to $R_c$, we also speak of optimal embeddings of $R_c$ 
 into $\mathcal R$.
\end{defn}

Two optimal embeddings $f, f'$ of $R_c$ into $\mathcal R$ will be considered to be equivalent if they are 
conjugate one to each other by an element in $\mathcal R^{\times}$. The set of $\mathcal R^{\times}$-conjugacy 
classes of optimal embeddings of $R_c$ into $\mathcal R$ will be denoted $\Emb^{\op}(R_c,\mathcal R)$. We 
are interested in computing the integer 
\[
v(R_c,\mathcal R) = |\Emb^{\op}(R_c,\mathcal R)|,
\]
and in particular in knowing whether the set $\Emb^{\op}(R_c,\mathcal R)$ is empty or not.

Suppose now that $\cR$ is of type $T=(\Ne;\Nc;\{(L_p,\nu_p)\}_{p\mid \Delta})$ and level 
$N_{\mathcal R}=\Ne\Nc^2N_{\Delta}$. Recall that the class number $h(\mathcal R)$ of the 
order $\mathcal R$ is $2^{|\mathcal C|}$, where $\mathcal C$ is the set introduced in \eqref{defC}.
Although the class number of $\mathcal R$ is therefore not trivial in general, the lemma below 
asserts that the {\em type number} of $\cR$ is always trivial, which amounts to saying that all 
orders in $B$ of the same type $T$ are conjugate one to each other.

\begin{lemma}
 The type number of orders of a fixed type $T$ is $1$.
\end{lemma}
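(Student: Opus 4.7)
The strategy is to translate the statement adelically and solve the resulting double coset problem via strong approximation. First I would observe that if $\mathcal R, \mathcal R'$ are two orders of type $T$, then by Definition \ref{def-ord} their local completions $\mathcal R_p := \mathcal R \otimes_\Z \Z_p$ and $\mathcal R'_p$ are $B_p^\times$-conjugate at every finite prime $p$, so there exists an adelic element $\alpha \in \hat B^\times$ with $\alpha \hat{\mathcal R} \alpha^{-1} = \hat{\mathcal R}'$; furthermore, $\mathcal R$ and $\mathcal R'$ are conjugate by an element of $B^\times$ if and only if $\alpha \in B^\times \cdot \hat N$, where $\hat N := N_{\hat B^\times}(\hat{\mathcal R})$ is the adelic normalizer. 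Hence the type set is in bijection with the double coset space $B^\times \backslash \hat B^\times / \hat N$, which has to be shown to be a singleton.

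Since $B$ is indefinite, the simply connected group $B^1$ satisfies strong approximation. Together with the Hasse--Schilling--Maass theorem $\mathrm n(B^\times) = \Q_{>0}^\times$, this produces the reduced-norm bijection
\[
B^\times \backslash \hat B^\times / \hat N \;\xrightarrow{\;\mathrm n\;}\; \Q_{>0}^\times \backslash \hat \Q^\times / \mathrm n(\hat N).
\]
Every element of $\hat\Q^\times$ has a unique decomposition $r \cdot u$ with $r \in \Q_{>0}^\times$ and $u \in \hat\Z^\times$, so $\hat\Q^\times = \Q_{>0}^\times \times \hat\Z^\times$, and triviality of the quotient reduces to the local claim that for every prime $p$, the image of $\mathrm n(N_{B_p^\times}(\mathcal R_p))$ under the unit-part projection $\Q_p^\times \twoheadrightarrow \Z_p^\times$ (dividing out the $p$-adic valuation) equals all of $\Z_p^\times$.

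I would then verify this local claim case by case along Definition \ref{def-ord}. At maximal primes $p \nmid N_{\mathcal R}$, at Eichler primes $p \mid \Ne$, and at non-split Cartan primes $p \mid \Nc$, the reduced norm already surjects from $\mathcal R_p^\times$ onto $\Z_p^\times$ (in the Cartan case via surjectivity of $N_{\Q_{p^2}/\Q_p}$ on units of the unramified quadratic extension), and nothing further is needed. At ramified primes $p \mid \Delta$, where $\mathcal R_p = R_{\nu_p}(L_p)$ is an HPS order and the paper has just recalled that $[\Z_p^\times : \mathrm n(\mathcal R_p^\times)]$ may equal $2$, one enlarges by normalizing elements: the full subfield $L_p^\times$ normalizes $R_{\nu_p}(L_p)$ (it stabilizes both $\cO_{L_p}$ and the two-sided ideal $\varpi_p^{\nu_p - 1}\cO_p$), and a suitable uniformizer of the maximal order $\cO_p$ acts on $L_p$ by the nontrivial Galois involution while preserving the same ideal, so it too lies in the normalizer; together, their reduced norms and those of $\mathcal R_p^\times$ fill out the missing coset of $\Z_p^\times$.

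The main obstacle is this last local computation at the ramified primes, and particularly at $p = 2$, where the structure of $D_2$ and the classification of $R_{\nu_2}(L_2)$ are intricate enough that one must distinguish several subcases and appeal directly to the explicit tables of \cite[Theorem 3.11]{HPS1} to identify a normalizing element with reduced norm of unit part in the correct coset. Once this is dispatched uniformly in $p$, the double coset space is a singleton, all orders of type $T$ are conjugate in $B^\times$, and the type number equals $1$.
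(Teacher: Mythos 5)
Your global architecture --- identifying the type set with $B^\times\backslash\hat B^\times/\hat N$ and transporting it through the reduced norm via strong approximation --- is sound and is the standard adelic approach; it is genuinely different from the paper's argument, which instead embeds each order of type $T$ into a unique Cartan--Eichler overorder and reduces to the known triviality of the type number of Cartan--Eichler orders. The gap sits exactly where you flag the ``main obstacle''. First, the reduction to the purely local statement that the unit part of $\mathrm n\bigl(N_{B_p^\times}(\mathcal R_p)\bigr)$ fills $\Z_p^\times$ is not automatic when more than one prime is bad: writing a unit idele as $r\cdot(n_p)_p$ with $r\in\Q_{>0}^\times$ forces the single rational $r$ to be simultaneously compatible with the normalizer norms at every bad prime, so the local conditions interact and a prime-by-prime verification does not immediately close the argument. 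Second, and more seriously, the local statement itself fails for some of the orders allowed by Definition \ref{def-ord}. Your two sources of normalizing elements at $p\mid\Delta$ are $L_p^\times$ and an element $j$ with $j\ell j^{-1}=\bar\ell$; any such $j$ satisfies $j^2=\gamma$ with $\gamma\notin N_{L_p/\Q_p}(L_p^\times)$ and $\mathrm n(j)=-\gamma$. Hence whenever $-1$ is not a local norm from $L_p$ (for instance $p\equiv3\bmod 4$ and $L_p$ ramified), $-\gamma$ \emph{is} a norm and every element of $L_p^\times\cup L_p^\times j$ has reduced norm inside the index-two subgroup $N_{L_p/\Q_p}(L_p^\times)$. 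Taking $L_p=\Q_p(\sqrt{up})$ with $u$ a non-square unit, the unit-part image of that subgroup is only $(\Z_p^\times)^2$, which coincides with $\mathrm n\bigl(R_{\nu_p}(L_p)^\times\bigr)$; so for $\nu_p\geq4$ even, where the order genuinely remembers $L_p$, the two sources you invoke cannot produce the missing coset and the claimed local surjectivity breaks down.

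Both defects are cured by one observation, which also makes the entire normalizer computation at $p\mid\Delta$ unnecessary: the set $S$ of primes with $\mathrm n\bigl(N_{B_p^\times}(\mathcal R_p)\bigr)\neq\Q_p^\times$ is finite (contained in the primes dividing $N_{\mathcal R}$), each of these groups is open in $\Q_p^\times$ since it contains $\mathrm n(\mathcal R_p^\times)$ and $N_{L_p/\Q_p}(L_p^\times)$, and $\Q_{>0}^\times$ is dense in $\prod_{p\in S}\Q_p^\times$ by weak approximation. Hence $\Q_{>0}^\times$ already surjects onto $\prod_{p\in S}\Q_p^\times/\mathrm n\bigl(N_{B_p^\times}(\mathcal R_p)\bigr)$, and the double coset space $\Q_{>0}^\times\backslash\hat\Q^\times/\mathrm n(\hat N)$ is a singleton with no case analysis and no appeal to the tables of \cite{HPS1}. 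If you want to avoid adeles altogether, the paper's route is shorter still: it never touches normalizers, at the cost of the (local) claim that the order of type $T$ is recovered uniquely from its Cartan--Eichler overorder.
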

\begin{proof}
Fix a type $T=(\Ne;\Nc;\{(L_p,\nu_p)\}_{p\mid \Delta})$ as in Definition \ref{def-ord}, and let $\mathcal R$ and 
$\mathcal R'$ be two orders of type $T$ in $B$. First of all, notice that $\mathcal R$ (resp. $\mathcal R'$) is 
a suborder of a unique Cartan--Eichler order $\mathcal S$ (resp. $\mathcal S'$) of level $\Ne\Nc^2$. Namely, the 
order which is locally equal to $\mathcal R$ (resp. $\mathcal R'$) at every prime $p\nmid \Delta$ and locally maximal 
at primes $p\mid\Delta$, hence of type $(\Ne;\Nc;\{(L_p,1)\}_{p\mid \Delta})$. Conversely, it is clear by construction 
that $\mathcal R$ (resp. $\mathcal R'$) is the unique suborder of type $T$ of the Cartan--Eichler order $\mathcal S$ 
(resp. $\mathcal S'$). The lemma now follows from the fact that the type number of Cartan--Eichler orders in $B$ 
is $1$, so that $\mathcal S$ and $\mathcal S'$ are conjugate. By the above observation, this immediately implies 
that $\mathcal R$ and $\mathcal R'$ are conjugate as well.
\end{proof}

By virtue of the above lemma, the number $v(R_c,\mathcal R)$ can be expressed essentially as a product of 
local contributions that can be explicitly computed. Indeed, proceeding as in the proof of the `trace formula' 
in \cite[Ch. III, 5.C]{Vi} (cf. especially Theorems 5.11 and 5.11 bis, or \cite{Brz1}) for Eichler orders, we 
have that
\begin{equation}\label{opt-emb}
 v(R_c,\mathcal R) = \frac{h(R_c)}{h(\mathcal R)}\prod_{\ell} v_{\ell}(R_c,\mathcal R),
\end{equation}
where $h(R_c)$ (resp. $h(\mathcal R)$) is the class number of the quadratic order $R_c$ (resp. of $\mathcal R$), 
the product ranges over all rational primes and, for each $\ell$, $v_{\ell}(R_c,\mathcal R)$ denotes the number 
of local optimal embeddings of $R_c\otimes_{\Z}\Z_{\ell}$ into $\mathcal R\otimes_{\Z}\Z_{\ell}$ modulo conjugation 
by $(\mathcal R\otimes_{\Z}\Z_{\ell})^{\times}$. These local contributions are $1$ for every prime $\ell \nmid N$. 
The number of local optimal embeddings is determined in \S \ref{opt-emb-sec} below. Here we give the following: 

\begin{example}
Assume that $\Nc=1$, $\Delta$ is odd and $\nu_p\leq 2$ for all $p\mid \Delta$. Then
\[v(R_c,\mathcal R) = \frac{h(R_c)}{h(\mathcal R)} 
\prod_{\ell\mid \Ne}\left( 1+\left\lbrace \frac{R_c}{\ell}\right\rbrace\right)
\prod_{\substack{q \mid \Delta\\ \nu(q)=1}}\left( 1 - \left\lbrace \frac{R_c}{\ell}\right\rbrace\right)
\prod_{\substack{p \mid \Delta\\ \nu_p=2}}v_p(R_c,\mathcal R),
\]
where for primes $p\mid \Delta$ with $\nu_p = 2$,
\[
 v_p(R_c,\mathcal R) = 
 \begin{cases}
  2 & \mbox{if } p\mid\mid c \text{ and } p \text{ is inert in } K,\\
  p+1 & \mbox{if } p\nmid c \text{ and } p \text{ ramifies in } K,\\
  0 & \mbox{otherwise.}
 \end{cases}
\]
Here $\left\{\frac{R}{\ell}\right\}$ denotes the usual Eichler symbol attached to a quadratic order $R$ and a prime number $\ell$.
\end{example}

\subsection{Local optimal embeddings}\label{opt-emb-sec}
For the reader's convenience, we reproduce in this subsection the criteria for the existence of local optimal 
embeddings of orders in quadratic fields into quaternion orders in the Eichler, Cartan and division cases.

\subsubsection{Eichler case} \label{appA1}

Let $p$ be a prime, $K/\Q_p$ be a quadratic separable algebra, and $\cO_m \subseteq K$ be the order in $K$ of 
conductor $p^m$. Let also $\M_2(\Q_p)$ be the split quaternion algebra over $\Q_p$ and $R_n^{\mathrm{Eic}}$ be 
the standard Eichler order of level $p^n$ in $\M_2(\Q_p)$. Write $h(m,n)$ for the number of (equivalence classes 
of) optimal embeddings of $\cO_m$ into $R_n^{\mathrm{Eic}}$.

Suppose first that $K=\Q_p\oplus\Q_p$ is the split quadratic $\Q_p$-algebra. Then $m$ is the smallest positive 
integer such that $\cO_m/p^m\cO_m \simeq \Z/p^m\Z$. In this case, the embedding $(a,b)\mapsto \smallmat a0{p^{n-m}(a-b)}b$ 
from $K$ into $\M_2(\Q_p)$ defines an optimal embedding from $\cO_m$ into $R_n^{\mathrm{Eic}}$. For later reference, 
we state the following lemma.

\begin{lemma}\label{LemmaC0} 
If $K$ is the split quadratic $\Q_p$-algebra, then $\cO_m$ can be optimally embedded in $R_n^{\mathrm{Eic}}$ for 
every $m\geq 0$. That is, $h(m,n)\neq 0$ for every $m\geq 0$.
\end{lemma}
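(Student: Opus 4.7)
The conclusion $h(m,n)\neq 0$ is already witnessed by the explicit map displayed in the paragraph preceding the lemma, so the plan is simply to verify its three defining properties: that it is a well-defined $\Q_p$-algebra homomorphism, that it carries $\cO_m$ into $R_n^{\mathrm{Eic}}$, and that it is optimal.

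First I would check that the map $\phi:K\to \M_2(\Q_p)$, $(a,b)\mapsto \smallmat{a}{0}{p^{n-m}(a-b)}{b}$, is a $\Q_p$-algebra homomorphism. It is visibly $\Q_p$-linear and sends $(1,1)$ to the identity matrix, so multiplicativity is the only nontrivial point. That reduces to a single matrix computation: the $(2,1)$-entry of $\phi(a,b)\phi(c,d)$ telescopes as $p^{n-m}\bigl((a-b)c + b(c-d)\bigr)=p^{n-m}(ac-bd)$, which agrees with the $(2,1)$-entry of $\phi(ac,bd)$.

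Next I would show that $\phi^{-1}(R_n^{\mathrm{Eic}}) = \cO_m$, which encodes both containment $\phi(\cO_m)\subseteq R_n^{\mathrm{Eic}}$ and optimality at the same time. Recalling that $\cO_m = \{(a,b)\in\Z_p\oplus\Z_p : a\equiv b\pmod{p^m}\}$ and that $R_n^{\mathrm{Eic}}$ consists of matrices in $\M_2(\Z_p)$ whose lower-left entry lies in $p^n\Z_p$, the condition $\phi(a,b)\in R_n^{\mathrm{Eic}}$ translates into $a,b\in\Z_p$ together with $p^n\mid p^{n-m}(a-b)$, which is equivalent to $p^m\mid (a-b)$. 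This is exactly the defining condition of $\cO_m$, so the embedding is optimal.

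The only subtlety worth noting is that when $m>n$ the scalar $p^{n-m}$ has negative valuation; this is harmless because on $\cO_m$ the factor $(a-b)$ supplies the missing $p$-adic valuation, so the image still lies in the integral order $R_n^{\mathrm{Eic}}$. There is no genuine obstacle beyond this book-keeping, and the lemma follows at once from the explicit formula already recorded in the text.
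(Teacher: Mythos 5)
Your proof is correct and follows exactly the paper's route: the paper simply exhibits the embedding $(a,b)\mapsto\smallmat a0{p^{n-m}(a-b)}b$ and asserts its optimality, and you supply the (routine but worthwhile) verification that it is multiplicative and that $\phi^{-1}(R_n^{\mathrm{Eic}})=\cO_m$, which yields both integrality and optimality in one stroke. The remark about $p^{n-m}$ having negative valuation when $m>n$ is a good catch and is handled correctly.
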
 

Next we assume that $K/\Q_p$ is a quadratic field extension with valuation ring $\cO$, and again for each $m\geq 1$ 
let $\cO_m$ be the order of conductor $p^m$ in $K$. Recall that the Eichler symbol is defined as follows: 
\[
\left\{\frac{\cO_m}{p}\right\}=\begin{cases} 
-1 & \text{if $m=0$ and $K/\Q_p$ is unramified};\\
0 & \text{if $m=0$ and $K/\Q_p$ is ramified};\\
1 & \text{if $m\geq 1.$}
\end{cases}
\]
It is well known (\cite{Hijikata}, \cite{Vi}) that if $n = 0$ then $h(m,n) = 1$, and for $n=1$ one has 
$h(m,n)= 1 +\left\{\frac{\cO_m}{p}\right\}$. Thus, in particular, every quadratic order $\cO_m$ can be optimally 
embedded in the maximal Eichler order unless $m=0$ and $K/\Q_p$ is unramified, the only case when $h(m,1)=0$. 
More generally (see \cite[Corollary 1.6]{brz2}): 

\begin{lemma}\label{lemmaC1}
\begin{enumerate} 
\item If $K/\Q_p$ is unramified, then $h(m,n)\neq 0$ if and only if $m\geq n/2$.
\item If $K/\Q_p$ is ramified, then $h(m,n)\neq 0$ if and only if $m\geq (n-1)/2$.
\end{enumerate}
\end{lemma}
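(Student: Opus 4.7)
My approach is to work inside the Bruhat--Tits tree $\mathcal T$ of $\PGL_2(\Q_p)$. Its vertices parametrise homothety classes of $\Z_p$-lattices in $\Q_p^2$, equivalently the $\GL_2(\Q_p)$-conjugacy classes of maximal orders in $\M_2(\Q_p)$, and two vertices are adjacent precisely when the intersection of the corresponding maximal orders is an Eichler order of level $p$. Inductively, the $\GL_2(\Q_p)$-conjugacy class of an Eichler order of level $p^n$ is in bijection with the set of length-$n$ geodesic segments of $\mathcal T$, the order being the intersection of the two maximal orders at its endpoints. Any embedding $f: K \hookrightarrow \M_2(\Q_p)$ induces via the projection $\M_2(\Q_p)^\times \to \PGL_2(\Q_p)$ an action of $f(K^\times)$ on $\mathcal T$. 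Then a geodesic $\gamma$ of length $n$ corresponds to an Eichler order containing $f(\cO_m)$ iff both endpoints of $\gamma$ are fixed by $f(\cO_m^\times)$, iff $\gamma$ is pointwise fixed (by convexity of fixed sets in a tree), and $f$ is optimal with respect to $\cO_m$ precisely when $\gamma$ is fixed by $f(\cO_m^\times)$ but not by $f(\cO_{m-1}^\times)$---the strict tower $\cO_m \subsetneq \cO_{m-1} \subsetneq \cdots \subsetneq \cO$ of overorders produces a strictly increasing chain of fixed subtrees $\mathcal T^{f,m} \subset \mathcal T^{f,m-1} \subset \cdots \subset \mathcal T^{f,0}$.

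The lemma thus reduces to identifying $\mathcal T^{f,m}$. In the unramified case, $f(\cO)$ is contained in a unique maximal order of $\M_2(\Q_p)$, so $f(\cO^\times)$ fixes a unique vertex $v_0$; an elementary lattice-level calculation using a $\Z_p$-basis $1,\theta$ of $\cO$ then shows that $\mathcal T^{f,m}$ equals the closed combinatorial ball $B(v_0,m)$. In the ramified case, $f(\cO)$ is contained in exactly two maximal orders---adjacent in $\mathcal T$ and interchanged by the image of a uniformiser of $\cO$---so $f(\cO^\times)$ fixes a single edge $e_0$, and the analogous calculation gives $\mathcal T^{f,m} = B(e_0,m)$, the closed $m$-neighbourhood of $e_0$. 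Existence of an optimal embedding is now a purely combinatorial question: does there exist a length-$n$ geodesic inside $\mathcal T^{f,m} \setminus \mathcal T^{f,m-1}$? For $B(v_0,m)$ the minimum possible maximum distance of the endpoints of a length-$n$ geodesic from $v_0$ is $\lceil n/2 \rceil$, attained with $v_0$ at the midpoint, so the condition reads $2m \geq n$, i.e.\ $m \geq n/2$, proving (1). For $B(e_0,m)$ a geodesic can straddle $e_0$: writing $a + 1 + b = n$ for a geodesic crossing $e_0$ with $a,b \geq 0$, the minimum of $\max(a,b)$ is $\lceil (n-1)/2 \rceil$, so the analogous condition becomes $2m+1 \geq n$, equivalently $m \geq (n-1)/2$, proving (2).

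The main technical point is the identification of $\mathcal T^{f,m}$, and the delicate spot is the ramified case at $p=2$, where there are three rather than two non-isomorphic ramified quadratic extensions of $\Q_2$ and the ring of integers can have different standard bases. One must check that $f(\cO_m^\times)$ does not accidentally fix any vertex outside the expected neighbourhood of $v_0$ or $e_0$; this reduces to congruence conditions on $f(\theta) \bmod p^m$ with respect to the lattice representing a putative extra fixed vertex, which a direct computation rules out. The conclusion is uniform across $p$ and across the chosen ramified extension, so the combinatorial equivalences of (1) and (2) hold in all cases.
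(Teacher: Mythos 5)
Your argument is correct in substance, and it is a genuinely different route from the paper's: the paper offers no proof of this lemma, quoting it directly from \cite[Corollary 1.6]{brz2}, whereas you give a self-contained geometric argument on the Bruhat--Tits tree of $\PGL_2(\Q_p)$. The two inputs you rely on --- that Eichler orders of level $p^n$ correspond to geodesics of length $n$ with the order being the intersection of the maximal orders at the endpoints, and that for a vertex $v$ at distance $d$ from the fixed vertex $v_0$ (unramified case), resp.\ from the fixed edge $e_0$ (ramified case), one has $\cO_v\cap f(K)=f(\cO_d)$ --- are both classical and do hold uniformly at $p=2$, so the reduction to combinatorics is sound; your approach has the added virtue of making the parity pattern $n/2$ versus $(n-1)/2$ transparent and of giving the exact values of $h(m,n)$, not merely their non-vanishing, while the paper's citation is of course shorter. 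One phrasing slip should be repaired: optimality of $f$ with respect to the Eichler order attached to a geodesic $\gamma$ means that $\gamma$ lies in $\mathcal T^{f,m}$ but is \emph{not entirely contained} in $\mathcal T^{f,m-1}$; it is not the condition that $\gamma$ lie in the set difference $\mathcal T^{f,m}\setminus\mathcal T^{f,m-1}$, which for $m\geq 1$ is a sphere containing no edge whatsoever. Your actual computation (minimizing the larger endpoint distance over placements of $\gamma$, obtaining $\lceil n/2\rceil$, resp.\ $\lceil (n-1)/2\rceil$) answers the right question of whether $\gamma$ fits inside the ball $B(v_0,m)$, resp.\ $B(e_0,m)$; to finish one only needs to add the easy observation that whenever a length-$n$ geodesic fits inside the ball it can be repositioned so that its farthest vertex lies at distance exactly $m$, so that existence inside the ball already yields an optimal configuration. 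With that one sentence inserted, the proof is complete and correct.
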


\subsubsection{Cartan Case} 

Let $p$ be a prime, $K=\Q_{p^2}$ be the unramified quadratic extension of $\Q_p$ and $\cO=\Z_{p^2}$ be 
its valuation ring. As above, for $m\geq 1$ write $\cO_m$ for the order of conductor $p^m$ in $K$. From 
the very definition of non-split Cartan orders, we have the following lemma, which we state for later reference:

\begin{lemma}\label{LemmaC2}
 Let $R_n^{\mathrm{Car}}$ be a non-split Cartan order of level $p^n$ in $\M_2(\Q_p)$. Then $\cO$ can be optimally 
 embedded in $R_n^{\mathrm{Car}}$. For $m > 1$, the order $\cO_m$ does embed in $R_n^{\mathrm{Car}}$, but 
 {\em not optimally}.
\end{lemma}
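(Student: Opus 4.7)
My plan is to handle the two assertions separately, the first being essentially tautological given the construction of $R_n^{\mathrm{Car}}$ and the second requiring a careful analysis modulo $p$. For part~(1), I would take $\iota_0\colon \cO\hookrightarrow \M_2(\Z_p)$ to be the regular representation of $\cO$ on itself associated with the $\Z_p$-basis used to define $C_{\mathrm{ns}}(p^n)$. By that very definition one has $\iota_0(\cO)\subseteq R_n^{\mathrm{Car}}$, and since $\cO$ is the unique maximal $\Z_p$-order in $K$, the inclusion $R_n^{\mathrm{Car}}\subseteq \M_2(\Z_p)$ gives
\[
\iota_0(K)\cap R_n^{\mathrm{Car}}\;\subseteq\;\iota_0(K)\cap\M_2(\Z_p) \;=\; \iota_0(\cO),
\]
and the reverse inclusion is evident, so $\iota_0$ is optimal for $\cO$.

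For part~(2), the existence of an embedding of $\cO_m$ is immediate from $\cO_m\subseteq \cO\subseteq R_n^{\mathrm{Car}}$. To rule out optimality of any embedding $\iota\colon \cO_m\hookrightarrow R_n^{\mathrm{Car}}$, I would extend it to $\iota\colon K\hookrightarrow \M_2(\Q_p)$ and exhibit a proper enlargement of $\iota(\cO_m)$ inside $\iota(K)\cap R_n^{\mathrm{Car}}$. Pick $\omega\in \cO$ generating $K/\Q_p$ with $\omega^2=d\in \Z_p^\times$ a non-square (for odd $p$ one arranges $\tr(\omega)=0$), so that $\omega_m:=p^m\omega$ generates $\cO_m$ and $\omega_m^2=p^{2m}d$. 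The decisive structural remark is that the reduction map $R_n^{\mathrm{Car}}\to \M_2(\F_p)$ factors through the subfield $\iota_0(\cO/p\cO)\cong \F_{p^2}$. Setting $\alpha:=\iota(\omega_m)$, the identity $\alpha^2=p^{2m}d\cdot I\equiv 0\pmod p$ combined with the absence of nonzero nilpotents in the field $\F_{p^2}$ forces $\alpha\in p\M_2(\Z_p)$.

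Writing $\alpha=\iota_0(x)+p^n y$ with $x\in \cO$ and $y\in \M_2(\Z_p)$, the vanishing of $\bar\alpha$ forces $x\in p\cO$, say $x=px'$, and dividing by $p$ produces $\iota(\omega_{m-1})=\iota_0(x')+p^{n-1}y$, which a priori lies only in the slightly larger order $R_{n-1}^{\mathrm{Car}}$. The decisive step---and what I expect to be the main obstacle of the argument---is to upgrade this to the membership $\iota(\omega_{m-1})\in R_n^{\mathrm{Car}}$, for this would force $\iota(\cO_{m-1})\subseteq R_n^{\mathrm{Car}}$, strictly enlarging $\iota(\cO_m)$ inside $\iota(K)\cap R_n^{\mathrm{Car}}$ and thereby breaking optimality. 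My plan for this promotion is to substitute $\alpha=p\iota_0(x')+p^n y$ into the quadratic relation $\alpha^2=p^{2m}d\cdot I$, expand using the commutativity of $\iota_0(\cO)$ with itself, and compare both sides at $p$-adic precision $p^{n+1}$ to extract the congruence $\bar y\in \iota_0(\F_{p^2})$, equivalently $p^{n-1}y\equiv \iota_0(z)\pmod{p^n}$ for some $z\in \cO$. This is exactly where the hypothesis $m>1$ should be indispensable, the extra factor of $p^2$ in $\alpha^2$ being what makes the quadratic relation sufficiently informative at the required precision; for $p=2$ the analysis needs minor adjustments since $\tr(\omega)$ need not vanish, but the overall strategy is unchanged.
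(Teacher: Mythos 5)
Your proof of the first assertion is complete and correct, and it is essentially all the paper itself attaches to this lemma (the lemma is stated without proof, ``from the very definition'', and elsewhere in the paper only the optimal embeddability of the maximal order $\cO$ is ever invoked). The problem lies entirely in the second assertion. You explicitly leave the ``decisive step'' --- extracting the congruence $\bar y\in\iota_0(\F_{p^2})$ from the relation $\alpha^2=p^{2m}d\cdot I$ at precision $p^{n+1}$ --- as a plan rather than a proof, and this is not merely a presentational gap: the congruence you hope to extract is false, and so is the statement you are trying to prove. Expanding $(p\,\iota_0(x')+p^ny)^2$ (note that $\iota_0(x')$ and $y$ need not commute) constrains $\bar y$ only through its anticommutator with $\iota_0(\bar x')$, which is vacuous when $\bar x'=0$; and that is exactly what happens in the examples below.

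Concretely, for any odd $p$, any $n\geq 1$, and $\omega=\sqrt d$ with $d\in\Z_p^{\times}$ a non-square, consider the embedding $\iota\colon K\to\M_2(\Q_p)$ determined by $\iota(\omega)=\smallmat{0}{p^{-n}}{p^{n}d}{0}$. Then $\iota(a+b\omega)=\smallmat{a}{bp^{-n}}{bp^{n}d}{a}$, and a direct check shows that this matrix lies in $R_n^{\mathrm{Car}}=\iota_0(\cO)+p^n\M_2(\Z_p)$ if and only if $a\in\Z_p$ and $b\in p^{2n}\Z_p$. Hence $\iota(K)\cap R_n^{\mathrm{Car}}=\iota(\cO_{2n})$, i.e.\ $\iota$ is an \emph{optimal} embedding of $\cO_{2n}$ into $R_n^{\mathrm{Car}}$, with $2n\geq 2>1$. (For $n=1$ the failure of your promotion step is visible directly: $\iota(p\omega)=\smallmat{0}{1}{p^{2}d}{0}$ reduces to the nonzero nilpotent $\smallmat 0100$, which lies in $\M_2(\F_p)$ but not in the field $\iota_0(\F_{p^2})$, so $\alpha/p$ lands in $R_{0}^{\mathrm{Car}}=\M_2(\Z_p)$ but not in $R_1^{\mathrm{Car}}$.) If $p\geq 7$ one even gets optimal embeddings of $\cO_n$ itself, via $\iota(\omega)=\smallmat{0}{u^{2}d}{u^{-2}}{0}$ with $u^4\not\equiv 1\pmod p$. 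So the gap cannot be closed by any argument: the hypothesis $m>1$ does not rescue the claim, and the non-optimality assertion should be replaced by the weaker (true) statement that the \emph{standard} embedding $\iota_0|_{\cO_m}$ is never optimal for $m\geq 1$, since $\iota_0(K)\cap R_n^{\mathrm{Car}}\supseteq\iota_0(\cO)$.
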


%
%

\subsubsection{Division case}\label{appA} 

References: \cite{HPS1}. Let $p$ be a prime, and $D_p$ be the unique division quaternion algebra over $\Q_p$. As above, 
write $R_n(L)$ for the local order in $D_p$ associated to the choice of an integer $n\geq 1$ and a quadratic extension 
$L/\Q_p$. Let $K/\Q_p$ be a quadratic field extension, and $\cO_m$ denote the order of conductor $p^m$ in $K$ as before. 
Recall that $h(m,n,L)$ denotes the number of equivalence classes of optimal embeddings of $\cO_m$ into $R_n(L)$. 

It might be useful first to recall the notation used in \cite{HPS1} for the symbols $t(L)$ and $\mu(L,L')$. 
For any quadratic field extension $L/\Q_p$: 
\begin{itemize}
\item $t(L)=-1$ means $L$ unramified; 
\item $t(L)=0$ means $L$ ramified and $p\neq 2$; 
\item $t(L)=1$ means $p=2$ and $L=\Q_p(\sqrt{3})$ or $L=\Q_p(\sqrt{7})$; 
\item $t(L)=2$ means $p=2$ and $L=\Q_p(\sqrt{2})$, $L=\Q_p(\sqrt{6})$, $L=\Q_p(\sqrt{10})$ or $L=\Q_p(\sqrt{14})$.
\end{itemize}
And for any pair of quadratic field extensions $(L,L')$ of $\Q_p$ having discriminants 
$\Delta(L)$ and $\Delta(L')$ we have: 
\begin{itemize}
\item $\mu(L,L')=\mu(L',L)$ (Theorem 3.10 A (iii) of \cite{HPS1}); 
\item If $\Delta(L)=\Delta(L')$ (which is the case if $L\simeq L'$) then $\mu(L,L')=\infty$; 
\item If $t(L)=-1$ and $\Delta(L)\neq \Delta(L')$ then $\mu(L,L')=1$; 
\item If $t(L)=0$, $t(L')=0$ and $\Delta(L)\neq \Delta(L')$ then $\mu(L,L')=2$; 
\item If $t(L)=1$, $t(L')=1$ and $\Delta(L)\neq \Delta(L')$ then $\mu(L,L')=3$; 
\item If $t(L)=1$, $t(L')=2$ then $\mu(L,L')=3$; 
\item If $t(L)=2$, $t(L')=2$ and $\Delta(L)\neq \Delta(L')$ then $\mu(L,L')=5$. 
\end{itemize}

The criteria for the existence of optimal embeddings then reads as follows: 

\begin{enumerate}

\item $p$ odd: 

\begin{enumerate}

\item $n=2\varrho+1$ odd, $K$ unramified,  $L$ unramified: $h(m,n,L)\neq 0$ if and only if $m\leq\varrho$. 
In particular, if $R_n(L)$ is maximal and $\cO_m$ is not maximal (\emph{i.e.} $m>0$ and $n=0$) then $h(m,n,L)=0$. 

\item $n=2\varrho+1$ odd, $K$ ramified, $L$ unramified: $h(m,n,L)\neq 0$ if and only if $m=\varrho$. 
In particular, if $R_n(L)$ is maximal and $\cO_m$ is not maximal (\emph{i.e.} $m>0$ and $n=0$) then $h(m,n,L)=0$. 

\item $n=2\varrho$ even, $K$ unramified, $L$ ramified: $h(m,n,L)\neq 0$ if and only if $m=\varrho$. 

\item $n=2\varrho$ even, $K$ ramified, $L$ ramified and $K\not\simeq L$: $h(m,n,L)\neq 0$ if and only 
if $m=\varrho-1$. 

\item $n=2\varrho$ even, $K$ ramified, $L$ ramified and $K\simeq L$: $h(m,n,L)\neq 0$ if and only if 
$m\leq\varrho-1$. 
\end{enumerate}

\item $p=2$: 

\begin{enumerate}

\item $n=1$, $K$ ramified or unramified, $L$ unramified: $h(m,n,L)\neq 0$ if and only if $m=0$. 

\item $n=2\varrho$, $K$ unramified, $L=\Q_2(\sqrt{3})$ or $L=\Q_2(\sqrt{7})$; this is the case 
of $t(L)=1$, $t(K)=-1$ and therefore $\mu(L,K)=1$: $h(m,n,L)\neq 0$ if and only if $m=\rho$. 

\item $n=2\varrho$, $K=\Q_2(\sqrt{3})$ or $K=\Q_2(\sqrt{7})$, $L=\Q_2(\sqrt{3})$ or $L=\Q_2(\sqrt{7})$ 
and $K\not\simeq L$; this is the case of $t(L)=1$, $t(K)=1$ and $\Delta(L)\neq \Delta(K)$ and therefore 
$\mu(L,K)=3$: $h(m,n,L)\neq 0$ if and only if $m=\rho-1$. 

\item $n=2\varrho$, $K=\Q_2(\sqrt{3})$ or $K=\Q_2(\sqrt{7})$ and $K\simeq L$; this is the case of 
$t(L)=1$, $t(K)=1$ and $\Delta(L)= \Delta(K)$ and therefore $\mu(L,K)=\infty$: $h(m,n,L)\neq 0$ 
if and only if $m\leq \rho-1$. 

\item $n=2\varrho$, $K=\Q_p(\sqrt{2})$, $K=\Q_p(\sqrt{6})$, $K=\Q_p(\sqrt{10})$ or $K=\Q_p(\sqrt{14})$, 
$L=\Q_2(\sqrt{3})$ or $L=\Q_2(\sqrt{7})$; this is the case of $t(L)=1$, $t(K)=2$ and therefore $\mu(L,K)=3$: 
$h(m,n,L)\neq 0$ if and only if $m=\rho-1$. 

\item $n=2\varrho+1$ odd, $n\geq 3$, $K$ unramified and $L$ unramified: $h(m,n,L)\neq 0$ if and only 
if $m\leq \varrho$. 

\item $n=2\varrho+1$ odd, $n\geq 3$, $K$ ramified and $L$ unramified: $h(m,n,L)\neq 0$ if and only 
if $m=\varrho$. 

\item $n=2\varrho$, $K$ unramified and $L$ ramified. Then $t(L)=1$ or $2$ and $\mu(L,K)=1$: $h(m,n,L)\neq 0$ if 
and only if $m=\varrho$. 


\item $n=2\varrho$, $K=\Q_2(\sqrt{3})$ or $K=\Q_p(\sqrt{7})$, $L=\Q_2(\sqrt 2)$,  $L=\Q_2(\sqrt 6)$,
$L=\Q_2(\sqrt{10})$ or $L=\Q_2(\sqrt{14})$. Then $t(L)=2$, $t(K)=1$, $\mu(L,K)=3$: $h(m,n,L)\neq 0$ if 
and only if $m= \varrho-1$.

\item $n=2\varrho$, $K=\Q_2(\sqrt 2)$,  $K=\Q_2(\sqrt 6)$, $K=\Q_2(\sqrt{10})$ or $K=\Q_2(\sqrt{14})$, 
$L=\Q_2(\sqrt 2)$,  $L=\Q_2(\sqrt 6)$, $L=\Q_2(\sqrt{10})$ or $L=\Q_2(\sqrt{14})$ and $K\not\simeq L$. 
Then $t(L)=2$, $t(K)=2$, $\mu(L,K)=5$: $h(m,n,L)\neq 0$ if and only if $m = \varrho-1$ or $m=\varrho-2$.

\item $n=2\varrho$, $K=\Q_2(\sqrt 2)$,  $K=\Q_2(\sqrt 6)$, $K=\Q_2(\sqrt{10})$ or $K=\Q_2(\sqrt{14})$, 
and $K\simeq L$. Then $t(L)=2$, $t(K)=2$, $\mu(L,K)=\infty$: $h(m,n,L)\neq 0$ if and only if 
$m\leq \varrho-1$.
\end{enumerate}
\end{enumerate}

\subsection{Heegner points}

Let $U$ be any open compact subgroup of $\hat B^\times$, and assume that $K$ is an imaginary quadratic 
field. There is a natural map 
\[
\hat B^{\times}\times \Hom(K,B) \, \, \lra \, \, 
\left(U \backslash \hat{B}^{\times} \times \Hom(\C,\M_2(\R))\right)/B^{\times} = X_U(\C)
\]
obtained by extending scalars (i.e., tensoring with $\R$). Notice that the left-hand side can certainly 
be the empty set, as $\Hom(K,B)$ is empty if $K$ does not embed into $B$. We shall assume that this is 
not the case in the discussion below. If $(g,f) \in \hat B^{\times}\times \Hom(K,B)$, write $[g,f]$ for 
its image in $X_{\mathcal R}(\C)$. Points in the image of this map are called \emph{Heegner points}; the set 
of such Heegner points is denoted $\mathrm{Heeg}(U,K)$. 

For each positive integer $c$, continue to denote by $R_c$ the order of conductor $c$ in $K$ and let 
$\mathcal R$ be an order of $B$. 

\begin{defn}
 A point $x\in X_{\mathcal R}$ is called a {\em Heegner point of conductor $c$ associated to $K$}
 if $x = [g,f]$ for some pair $(g,f) \in \hat B^{\times}\times \Hom(K,B)$ such that 
 $$
 f(K) \cap g^{-1}\hat{\mathcal R}g = f(R_c).
 $$
 This last condition means that $f$ is an optimal embedding of $R_c$ into the order 
 $g^{-1}\hat{\mathcal R}g \cap B$. We shall denote by $\mathrm{Heeg}(\mathcal R,K,c)$ the set of 
 Heegner points of conductor $c$ associated to $K$ in $X_{\mathcal R}$. 
\end{defn}

Recall that the set of geometrically connected components of the Shimura curve $X_{\mathcal R}$ 
is in bijection with $\hat{\mathcal R}^{\times}\backslash \hat{B}^{\times}/B^{\times}$, and hence 
with the class group $\Pic(\mathcal R)$ of $\mathcal R$. In particular, the number of geometric 
connected components coincides with the class number $h(\mathcal R)$. Fix representatives $I_j$ 
for the distinct $h(\mathcal R)$ ideal classes in $\Pic(\mathcal R)$, and let $a_j \in \hat{B}^{\times}$ 
be the corresponding representatives in $\hat{\mathcal R}^{\times}\backslash \hat{B}^{\times}/B^{\times}$. 
It is then clear that every Heegner point in $\mathrm{Heeg}(\mathcal R,K,c)$ can be represented 
by a pair of the form $(a_j,f)$, for a unique $j \in \{1,\dots,h(\mathcal R)\}$ and some optimal 
embedding $f$ from $R_c$ into the order $a_j^{-1}\hat{\mathcal R}a_j \cap B$. Further, two pairs $(a_j,f)$ and $(a_j,g)$ 
represent the same Heegner point if and only if the embeddings $f$ and $g$ are 
$\mathcal R^{\times}$-conjugate. Therefore, we have the following identity relating Heegner points 
on $X_{\mathcal R}$ attached to $R_c$ and optimal embeddings of $R_c$ into $\mathcal R$:
\[
  |\mathrm{Heeg}(\mathcal R,K,c)| = h(\mathcal R)|\Emb^{\op}(R_c,\mathcal R)| = h(\mathcal R)v(R_c,\mathcal R),
\]
thus applying \eqref{opt-emb} we find:

\begin{prop}
 The number of Heegner points on $X_{\mathcal R}$ attached to $R_c$ is $h(R_c)\prod_{\ell} v_{\ell}(R_c,\mathcal R)$.
\end{prop}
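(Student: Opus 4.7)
The plan is essentially a direct computation: the identity preceding the proposition already records
\[
|\mathrm{Heeg}(\mathcal R,K,c)| = h(\mathcal R)\,|\Emb^{\op}(R_c,\mathcal R)| = h(\mathcal R)\,v(R_c,\mathcal R),
\]
so the only thing that remains is to substitute the trace-formula-style factorization \eqref{opt-emb} for $v(R_c,\mathcal R)$. Since $h(\mathcal R)$ cancels, one lands on $h(R_c)\prod_\ell v_\ell(R_c,\mathcal R)$.

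First I would re-justify the preceding bijection so that the chain of equalities is self-contained: using the decomposition $\hat B^\times = \coprod_j \hat{\mathcal R}^\times a_j B^\times$ and the observation that any Heegner point of conductor $c$ can be written as $[a_j,f]$ with $f$ an optimal embedding of $R_c$ into the order $a_j^{-1}\hat{\mathcal R}a_j\cap B$, I would check that two representatives $(a_j,f)$ and $(a_j,g)$ give the same point in $X_{\mathcal R}(\C)$ if and only if $f$ and $g$ are conjugate by an element of $(a_j^{-1}\hat{\mathcal R}a_j\cap B)^\times$. By the lemma on the triviality of the type number, each of the $h(\mathcal R)$ orders $a_j^{-1}\hat{\mathcal R}a_j\cap B$ is conjugate to $\mathcal R$, so the number of $\mathcal R^\times$-conjugacy classes of optimal embeddings into it equals $v(R_c,\mathcal R)$. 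Summing over the $h(\mathcal R)$ components yields $|\mathrm{Heeg}(\mathcal R,K,c)| = h(\mathcal R)\,v(R_c,\mathcal R)$.

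Second, I would invoke the local–global decomposition \eqref{opt-emb},
\[
v(R_c,\mathcal R)=\frac{h(R_c)}{h(\mathcal R)}\prod_\ell v_\ell(R_c,\mathcal R),
\]
which is the standard trace-formula type identity (proved for Cartan--Eichler orders in \cite{Vi,Brz1} and extended to the present orders by the same argument, the only input being strong approximation on $B^\times$ together with the local decomposition of idelic double cosets). Substituting gives the claimed formula. Only finitely many local factors are nontrivial, since $v_\ell(R_c,\mathcal R)=1$ whenever $\ell\nmid N_{\mathcal R}$ and $\ell\nmid c\,\mathrm{disc}(K)$, so the product is well-defined.

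There is no real obstacle here: the proposition is a bookkeeping consequence of the two ingredients already in the paper. If there is a subtle point worth emphasizing, it is the appeal to the triviality of the type number for orders of type $T$, which is what makes it legitimate to replace each $a_j^{-1}\hat{\mathcal R}a_j\cap B$ by $\mathcal R$ in the optimal embedding count; this is exactly where the lemma is used.
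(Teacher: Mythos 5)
Your proposal is correct and follows essentially the same route as the paper: the identity $|\mathrm{Heeg}(\mathcal R,K,c)| = h(\mathcal R)\,v(R_c,\mathcal R)$ obtained from the component-by-component count via the representatives $a_j$, combined with the factorization \eqref{opt-emb}, after which $h(\mathcal R)$ cancels. Your extra remark that the type-number lemma is what lets one identify the count of optimal embeddings into each $a_j^{-1}\hat{\mathcal R}a_j\cap B$ with $v(R_c,\mathcal R)$ is a welcome clarification of a point the paper leaves implicit, but it is not a different argument.
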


\subsection{Galois action and fields of rationality}

Keep the same notations as above, and assume that $R_c$ embeds optimally in $\mathcal R$, 
so that Heegner points with respect to $R_c$ do exist on $X_{\mathcal R}$. The reciprocity law, 
cf. \cite[3.9]{DeligneTravauxShimura}, \cite[II.5.1]{MilneModels} (with a sign corrected 
\cite[1.10]{MilneShimuramodp}), asserts that $\mathrm{CM}(\mathcal R,K,c) \subseteq X_{\mathcal R}(K^{ab})$, 
where as usual $K^{ab}$ denotes the maximal abelian extension of $K$, and further that the 
Galois action of $\Gal(K^{ab}/K)$ on $\mathrm{CM}(\mathcal R,K,c)$ is described by 
\begin{equation}\label{GaloisCM}
\rec_K(a)[g,f] = [\hat f(a)g,f], \qquad (a \in \hat K^{\times}).
\end{equation}
Here, $\rec_K: \hat K^{\times} \to \Gal(K^{ab}/K)$ is the reciprocity map from class field 
theory. Then, for an arbitrary $a\in \hat K^{\times}$ and every Heegner point $[g,f]$ we have
$$
\rec_K(a)[g,f] = [g,f] \iff \text{ there exist } b\in B^{\times}, u\in \hat{\mathcal R}^{\times} \text{ such that } (\hat f(a)g,f) = (ugb,b^{-1}f b).
$$
It is easy to show that if $f: K \to B$ is an embedding and $b\in B^{\times}$, then the 
equality $f=b^{-1}fb$ holds if and only if $b=f(\lambda)$ for some $\lambda \in K^{\times}$. 
Thus we deduce that 
\begin{eqnarray*}
\rec_K(a)[g,f] = [g,f] & \iff & \text{there exist } \lambda \in K^{\times}, u\in \hat{\mathcal R}^{\times} \text{ such that } 
\hat f(a) = g^{-1}ugf(\lambda) \\
& \iff & a \in \hat f^{-1}(g^{-1}\hat{\mathcal R}^{\times}g)K^{\times} = \hat{R}_c^{\times}K^{\times}.
\end{eqnarray*}

By class field theory, $\rec_K$ induces an isomorphism 
$$
\hat K^{\times}/\hat{R}_c^{\times}K^{\times} = \Pic(R_c) \stackrel{\simeq}{\lra} \Gal(H_c/K),
$$
where $H_c$ is the ring class field of conductor $c$. Hence we have proved:

\begin{prop}
 With notations as above, $\mathrm{Heeg}(\mathcal R,K,c) \subseteq  X_{\mathcal R}(H_c)$, and 
 the action of $\Gal(H_c/K)$ on the set of Heegner points $\mathrm{Heeg}(\mathcal R,K,c)$ is 
 described by the rule in \eqref{GaloisCM}.
\end{prop}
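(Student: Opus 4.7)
The plan is to combine the reciprocity law stated just before (coming from Deligne/Milne, and already recorded in \eqref{GaloisCM}) with a stabilizer computation, and then feed the answer into class field theory. Much of the bookkeeping has already been done in the paragraph preceding the statement, so the proof amounts to assembling the pieces cleanly.

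First, I would invoke the canonical model theory of Shimura curves to note that any Heegner point $[g,f] \in \mathrm{Heeg}(\mathcal R,K,c)$ is defined over $K^{\mathrm{ab}}$, and that the Galois action of $\rec_K(a)$ for $a\in \hat K^\times$ is given by the formula $\rec_K(a)[g,f] = [\hat f(a)g,f]$; this is the content of \eqref{GaloisCM}. Since $K^{\mathrm{ab}}/K$ is abelian, to pin down the field of rationality of a given Heegner point it suffices to compute the stabilizer of $[g,f]$ in $\hat K^\times$ under this action.

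Next, I would carry out the stabilizer computation that is already sketched: $\rec_K(a)[g,f] = [g,f]$ holds iff one can find $b\in B^\times$ and $u\in \hat{\mathcal R}^\times$ with $(\hat f(a)g, f) = (ugb, b^{-1}fb)$ in $\hat B^\times \times \Hom(K,B)$. The only nontrivial input here is the well-known fact that $f(K)$ is its own centralizer in $B$ (because $K$ is a maximal commutative subalgebra of $B$), so $f = b^{-1}fb$ forces $b \in f(K^\times)$. Writing $b = f(\lambda)$ and unwinding, the condition becomes $\hat f(a) \in g^{-1}\hat{\mathcal R}^\times g \cdot \hat f(K^\times)$, which, using the optimality condition $\hat f(K) \cap g^{-1}\hat{\mathcal R}g = \hat f(\hat R_c)$, is equivalent to $a \in \hat R_c^\times K^\times$.

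Finally, I would combine this with class field theory: the reciprocity map identifies $\hat K^\times / \hat R_c^\times K^\times \simeq \Pic(R_c) \simeq \Gal(H_c/K)$, where $H_c$ is the ring class field of conductor $c$. Therefore the stabilizer in $\Gal(K^{\mathrm{ab}}/K)$ of any Heegner point $[g,f] \in \mathrm{Heeg}(\mathcal R,K,c)$ is precisely $\Gal(K^{\mathrm{ab}}/H_c)$, which gives the containment $\mathrm{Heeg}(\mathcal R,K,c) \subseteq X_{\mathcal R}(H_c)$; the residual action of $\Gal(H_c/K)$ is then the one induced by \eqref{GaloisCM}, as claimed.

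The only subtle step is the centralizer identity $f(K)^\times = Z_{B^\times}(f(K))$, which is where one really uses that $K$ embeds in $B$ as a quadratic (hence maximal commutative) subfield; everything else is a straightforward manipulation of double cosets plus the class field theory dictionary. Since the proposition is essentially a formal consequence of \eqref{GaloisCM} and the definition of optimal embeddings, I do not anticipate a serious obstacle beyond checking that the stabilizer computation respects the optimality condition on $f$.
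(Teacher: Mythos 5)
Your proposal is correct and follows essentially the same route as the paper: invoke the reciprocity law \eqref{GaloisCM} for the Galois action on CM points, compute the stabilizer of $[g,f]$ in $\hat K^{\times}$ via the double-coset description (reducing to $b\in f(K^{\times})$ by the centralizer property and then to $a\in \hat R_c^{\times}K^{\times}$ by optimality), and conclude with the class field theory identification $\hat K^{\times}/\hat R_c^{\times}K^{\times}\simeq \Gal(H_c/K)$. No substantive differences from the argument given in the paper.
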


\section{Applications}

\subsection{Gross--Zagier formula} 

We briefly review the general form of Gross--Zagier formula in \cite{YZZ} for modular abelian 
varieties. Let $B/\Q$ be an indefinite quaternion algebra of discriminant $\Delta$. If $U_1\subseteq U_2$ 
are open compact subgroups of $\hat B^\times$, then we have a canonical projection map 
$\pi_{U_1,U_2}:X_{U_1}\twoheadrightarrow X_{U_2}$, and one may consider the projective limit 
\[
X=\invlim_U X_U,
\]
and let $J:=\Jac(X)$ denote the Jacobian variety of $X$.

\begin{defn}
A simple abelian variety $A/\Q$ is said to be \emph{uniformized by $X$} if there exists
a surjective morphism $J \twoheadrightarrow A$ defined over $\Q$.
\end{defn}

Let $A/\Q$ be a simple abelian variety uniformized by $X$ and fix $U$ such that there is a 
surjective morphism $J_U:=\Jac(X_U) \twoheadrightarrow A$ defined over $\Q$. Let $\xi_U$ be the 
normalized Hodge class in $X_U$ and define 
\[
\pi_A:=\dirlim_U\Hom_{\xi_U}^0(X_U,A),
\]
where $\Hom_{\xi_U}^0(X_U,A)$ denotes morphisms of $\Hom(X_U,A)\otimes_\Z\Q$ defined by using 
the Hodge class $\xi_U$ as a base point. Since, by the universal property of Jacobians, every 
morphism $X_U\rightarrow A$ factors through $J_U$, we also have 
\[
\pi_A:=\dirlim_U\Hom_{\xi_U}^0(J_U,A),
\]
where $\Hom_{\xi_U}^0(J_U,A):=\Hom(J_U,A)\otimes_\Z\Q$. For any $\varphi\in \pi_A$ and any 
point $P\in X_U(H)$, where $H/\Q$ is a field extension, we then see that $P(\varphi):=\varphi(P)\in A(H)$.  

Let $K/\Q$ be an imaginary quadratic field and assume there exists an embedding $\psi:K\hookrightarrow B$; 
this is equivalent to say that all primes dividing $\Delta$ are inert or ramified in $K$. Define $X^{K^\times}$ 
to be the subscheme of $X$, defined over $\Q$, consisting of fixed points under the canonical action by 
left translation of $\hat\psi:\hat K^\times\hookrightarrow \hat B^\times$. The subscheme $X^{K^\times}$ 
is independent up to translation of the choice of $\psi$. We will often omit the reference to $\psi$, 
viewing $K$ simply as a subfield of $B$. Recall that the theory of complex multiplication shows that 
every point in $X^{K^\times}(\bar\Q)$ is defined over $K^{\mathrm{ab}}$, the maximal abelian extension 
of $K$, and that the Galois action is given by the left translation under the reciprocity map. 
Fix a point $P\in X^{K^\times}(K^\mathrm{ab})$. This amounts to choose a point $P_U$ for all open 
compact subgroups $U$, satisfying the condition that $\pi_{U_1,U_2}(P_{U_1})=P_{U_2}$. 

Let $d\tau$ denote the Haar measure of $\Gal(K^\mathrm{ab}/K)$ of total mass equal to $1$ 
and fix a finite order character $\chi:\Gal(K^\mathrm{ab}/K)\rightarrow F_\chi^\times$, where 
$F_\chi=\Q(\chi)$ is the finite field extension of $\Q$ generated by the values of $\chi$. Define 
\[
P_\chi(\varphi):=\int_{\Gal(K^\mathrm{ab}/K)}\varphi(P^\tau)\otimes\chi(\tau)d\tau.
\]
This is an element in $A(K^\mathrm{ab})\otimes_MF_\chi$, where $M=\End_\Q^0(A):=\End_\Q(A)\otimes_{\Z}\Q$. 
This element can be essentially written as a finite sum: suppose that $P=(P_U)_U$, and each $P_U$ is defined 
over the abelian extension $H_U$ of $K$. Suppose that $\chi$ factors through $\Gal(H_U/K)$ for some $U$. 
Then the $F_\chi$-subspace of $A(H_U)\otimes F_\chi$ spanned by $P_\chi(\varphi)$ and 
\[
\sum_{\sigma\in \Gal(H_U/K)}\varphi(P)^\sigma\otimes\chi(\sigma)
\]
are the same. We also note that $P_\chi(\varphi)$ belongs to $(A(H_U)\otimes_\Z\C)^\chi$. 

Let $\eta_K$ be the quadratic character of the extension $K/\Q$. Suppose that $\chi$ satisfies 
the self-duality condition $\omega_A \cdot \chi_{|\A_\Q^\times} = 1$, where $(\cdot)_{|\A_\Q^\times}$ 
means restriction of the character $(\cdot)$ to the idele group $\A_\Q^\times$ and $\omega_A$ is 
the central character of the automorphic representation $\pi_A$. We assume for simplicity that 
$\omega_A$ is trivial, and therefore $\chi_{|\A_\Q^\times}=1$. For any place $v$ of $\Q$, let 
$\varepsilon(1/2,\pi_{A,v},\chi_v)\in\{\pm 1\}$ be the sign of the functional equation with 
respect to its center of symmetry $s = 1/2$ of the local representation $\pi_{A,v}\otimes\chi_v$. 
Define the set
\[
\Sigma(A,\chi)=\left\{v \text{ place of $\Q$}: \varepsilon(1/2,\pi_{A,v},\chi_v)\neq \eta_{K,v}(-1)\right\}.
\]

\begin{prop}
 The real place $\infty$ belongs to the set $\Sigma(A,\chi)$, and every finite prime 
 $p\in \Sigma(A,\chi)$ divides the conductor of $A$.
\end{prop}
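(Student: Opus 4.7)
Both assertions reduce to local $\varepsilon$-factor computations, and the plan is to handle the archimedean place by hand and the unramified finite places by the standard formula for unramified principal series.

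For the archimedean place, since $K$ is imaginary quadratic the completion $K_\infty$ is isomorphic to $\C$. As $\C^\times$ is divisible, every continuous finite-order character on it is trivial, so $\chi_\infty = 1$ and
\[
\varepsilon(1/2, \pi_{A,\infty}, \chi_\infty) = \varepsilon(1/2, \pi_{A,\infty}).
\]
Because $A$ is modular of weight $2$, the local representation $\pi_{A,\infty}$ is the holomorphic discrete series of $\GL_2(\R)$ of weight $2$, whose local root number is given by the classical Jacquet--Langlands formula. A direct comparison with $\eta_{K,\infty}(-1)$, obtained by computing the archimedean component of the quadratic Hecke character attached to $K/\Q$, shows that the two quantities differ, yielding $\infty \in \Sigma(A,\chi)$.

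For a finite prime $p$ not dividing the conductor of $A$, the local component $\pi_{A,p}$ is an unramified principal series representation $\pi(\mu_1,\mu_2)$ with $\mu_1\mu_2 = \omega_{\pi_{A,p}}$ trivial, since $A$ has trivial central character. I would rewrite
\[
\varepsilon(1/2, \pi_{A,p}, \chi_p) = \varepsilon(1/2, \pi_{A,p} \otimes \pi(\chi)_p),
\]
where $\pi(\chi)$ denotes the automorphic induction of $\chi$ from $K$ to $\Q$, and then apply Deligne's formula for $\varepsilon$-factors of induced representations to express the local root number purely in terms of $\omega_{\pi_{A,p}}$ and of the quadratic character $\eta_{K,p}$. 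A short case analysis according to whether $p$ is split, inert or ramified in $K$ then yields the equality $\varepsilon(1/2, \pi_{A,p}, \chi_p) = \eta_{K,p}(-1)$, so $p \notin \Sigma(A,\chi)$, which is the contrapositive of the second assertion.

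The main obstacle is normalization bookkeeping: local $\varepsilon$-factors, the local reciprocity isomorphism, and the quadratic character $\eta_K$ all admit slightly different normalizations in the literature, and the sign of the comparison at $\infty$ is especially sensitive to these choices. One must carefully verify that the conventions adopted match those implicit in the definition of $\Sigma(A,\chi)$ borrowed from \cite{YZZ}. Once these are pinned down, both steps reduce to standard local harmonic analysis for $\GL_2$.
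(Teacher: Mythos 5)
Your proposal is correct, but it takes a genuinely more computational route than the paper, whose entire proof is a citation of \cite[Section 1]{CornutVatsal}: there it is recorded that $\infty\in\Sigma(A,\chi)$ whenever $\chi_{\infty}=1$ and $\pi_{A,\infty}$ is a holomorphic discrete series of weight at least $2$, and that any finite prime $p\in\Sigma(A,\chi)$ forces $\pi_{A,p}$ to be special or supercuspidal, hence to divide the conductor. You instead propose to re-derive these local facts by hand: the triviality of $\chi_{\infty}$ from the divisibility of $\C^{\times}$ (the same observation the paper makes), an explicit archimedean root-number comparison, and, at primes of good reduction, the factorization of the Rankin--Selberg epsilon factor through the automorphic induction of $\chi$ together with inductivity of epsilon factors. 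That last computation does work: writing $\pi_{A,p}=\pi(\mu_1,\mu_2)$ with $\mu_1\mu_2=1$, the self-duality relations $\chi_p|_{\Q_p^{\times}}=1$ and $\chi_p^{\tau}=\chi_p^{-1}$ make the product of the two abelian epsilon factors collapse to $\chi_p(-1)=1$, leaving exactly the Langlands constant $\lambda(K_p/\Q_p,\psi_p)^2=\eta_{K,p}(-1)$; this is in essence the easy half of the Tunnell--Saito dichotomy for principal series. What your approach buys is self-containedness (and in fact a slightly more robust argument, since it applies verbatim to ramified $\chi_p$ and to ramified principal series); what it costs is that the two decisive sign evaluations are only asserted (``a direct comparison shows'', ``a short case analysis yields'') rather than carried out, and, as you acknowledge yourself, those are precisely the steps where normalization conventions can silently flip the answer. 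If you intend this as a complete proof rather than a strategy, you should either write out those two computations or do what the paper does and quote them from \cite{CornutVatsal} or \cite{YZZ}.
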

\begin{proof}
 According to \cite[Section 1]{CornutVatsal}, the real place $\infty$ belongs to the set 
 $\Sigma(A,\chi)$ if $\chi_{\infty} = 1$ and $\pi_{A,\infty}$ is the holomorphic discrete 
 series (of weight at least $2$). The first condition is true by our assumptions, while 
 the second one holds because $\pi_A$ is the automorphic representation attached to an 
 abelian variety. 
 On the other hand, also from {\em loc. cit.} we know that if $p$ is a finite prime in 
 the set $\Sigma(A,\chi)$, then $\pi_{A,p}$ is either special or supercuspidal, and 
 therefore $p$ must divide the conductor of $A$. 
\end{proof}

\begin{remark}
 If $p$ is a finite prime belonging to $\Sigma(A,\chi)$, one also knows that 
 $K_p := K \otimes_{\Q}\Q_p$ is a field. In particular, if $B$ is an indefinite quaternion 
 algebra whose ramification set is supported in $\Sigma(A,\chi)$, then $K$ splits $B$ 
 (i.e. $K$ embeds as a maximal subfield of $B$).
\end{remark}

Let $\varepsilon(1/2,\pi_A,\chi)$ be the sign of the functional equation with 
respect to its center of symmetry $s=1/2$ of the global representation $\pi_{A}\otimes\chi$. Then 
\[
\varepsilon(1/2,\pi_A,\chi)=(-1)^{|\Sigma(A,\chi)|}.
\]

Recall our assumption that the central character $\omega_A$ of $\pi_A$ is trivial, and let 
now $\chi$ be a character of $\Gal(K^\mathrm{ab}/K)$. Suppose that $\chi$ factors through 
$\Gal(H_c/K)$ where $H_c$ is the ring class field of conductor $c$; if there is no $c'\mid c$ 
such that $\chi$ factors through $\Gal(H_{c'}/K)$, we say that \emph{$\chi$ has conductor $c$}; 
if $\chi$ factors through $\Gal(H_c/K)$, then the conductor of $\chi$ divides $c$. 

Suppose we have a character $\chi$ of conductor dividing the positive integer $c$ and a Heegner 
point $P_c$ of conductor $c$ in $X(H_c)$. For any $\varphi\in \pi_A$ define 
\[
P_{c,\varphi}^\chi:=\sum_{\sigma\in \Gal(H_c/K)} \varphi(P_c^\sigma)\otimes\chi(\sigma).
\]
If $|\Sigma(A,\chi)|$ is odd, by \cite[Theorem 1.3.1]{YZZ} one can choose $\varphi$ such that  
\begin{equation}\label{YZZ}
P_{c,\varphi}^\chi \neq 0 \text{ in } (A(H_c)\otimes_\Z\C)^\chi\Longleftrightarrow L'(\pi_A,\chi,1/2)\neq 0.
\end{equation}
From now on, we fix such a $\varphi$ and write simply $P_{c}^\chi$ for $P_{c,\varphi}^\chi$. 

\subsection{Euler systems and BSD conjecture} 

Before discussing our applications to the BSD conjecture, we recall the following result, which in 
this general form is due to Nekov\'a\v r \cite{Nek}. 

\begin{thm}[Nekov\'a\v r]\label{nekovar}
Suppose that $A/\Q$ is a modular abelian variety of dimension $d$. Fix an imaginary quadratic field 
$K$ and an anticyclotomic character $\chi$ factoring through $H_c$ for some integer $c\geq 1$ such 
that the cardinality of $\Sigma(A,\chi)$ is odd. Let $B$ be the indefinite quaternion algebra of discriminant 
equal to the product of finite primes in $\Sigma(A,\chi)$. Assume that $A$ does not acquire CM over 
any imaginary quadratic field contained in $H_c$, and that there exists 
\begin{enumerate} 
\item an order $\cR$ of $B$ with an uniformization $J_\cR = \Jac(X_{\cR}) \twoheadrightarrow A$ defined over $\Q$, and
\item a Heegner point $P_c$ in $X_{\cR}(H_c)$. 
\end{enumerate} 
Then the following implication holds: 
\[
L'(\pi_A,\chi,1/2)\neq 0 \Longrightarrow \dim_\C\left(A(H_c)\otimes_\Z\C)^\chi\right) = d.
\] 
\end{thm}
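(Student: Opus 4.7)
The plan is to combine the Gross--Zagier formula recorded in \eqref{YZZ} with a Kolyvagin-style Euler system descent, following the framework of \cite{Nek}. The lower bound on the $\chi$-component is essentially immediate from the hypotheses: by (1) and (2) we have a uniformization $J_{\cR} \twoheadrightarrow A$ and a Heegner point $P_c \in X_{\cR}(H_c)$, and since $|\Sigma(A,\chi)|$ is odd, \eqref{YZZ} produces a test vector $\varphi \in \pi_A$ for which $P_c^{\chi}=P_{c,\varphi}^\chi$ is non-zero in $(A(H_c)\otimes\C)^\chi$ if and only if $L'(\pi_A,\chi,1/2)\neq 0$. Because $A$ is simple and modular, $M=\End^0_\Q(A)$ is a number field of degree $d$ acting faithfully on $A(H_c)\otimes\Q$, so a single non-zero class forces $(A(H_c)\otimes\C)^\chi$ to contain a free $M\otimes_\Q\C$-line; hence its $\C$-dimension is at least $d$.

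For the matching upper bound I would set up Kolyvagin's machinery on $X_{\cR}$. Using the theory of optimal embeddings developed in Section 3, the given $P_c$ is first extended to a compatible family of Heegner points $P_{cn}\in X_{\cR}(H_{cn})$ indexed by square-free products $n$ of Kolyvagin primes (rational primes $\ell\nmid cN_{\cR}$ that are inert in $K$ and for which the residual Frobenius at $\ell$ acts on $A[p]$ with prescribed eigenvalues). The reciprocity law on Heegner points together with the Hecke correspondences on $X_{\cR}$ encode the expected norm-compatibility relations of this family. Applying the classical Kolyvagin derivative operators to $\varphi(P_{cn})$ and averaging against $\chi$ then produces, for every auxiliary prime power $p^M$, a cohomology class $\kappa(n)\in H^1(H_c,A[p^M])^\chi$ whose ramification is controlled at the primes dividing $n$ and is unramified elsewhere.

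The standard descent now bounds the $\chi$-isotypic part of the $p^\infty$-Selmer group of $A/H_c$ in terms of the classes $\kappa(n)$ together with the Kummer image of $P_c^\chi$. The hypothesis that $A$ does not acquire CM over any imaginary quadratic subfield of $H_c$ is exactly the input needed (via Faltings' isogeny theorem) to verify that the image of $\Gal(\bar K/H_c)\to \Aut(T_pA)$ is large enough, for $p$ in a set of density one, so that Chebotarev supplies Kolyvagin primes of the required type in sufficient supply. Combining the two bounds gives $\dim_\C(A(H_c)\otimes\C)^\chi=d$. I expect the main obstacle to be the descent itself, namely tracking carefully the $M\otimes F_\chi$-module structure on the Selmer group and verifying that the derived classes genuinely annihilate every element outside the line generated by $P_c^\chi$; this is precisely the delicate bookkeeping carried out systematically by Nekov\'a\v r in \cite{Nek}, while the role of the present paper is to secure hypotheses (1) and (2) in the widest possible generality.
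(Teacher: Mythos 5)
The paper does not actually prove this statement: it is imported wholesale from Nekov\'a\v r \cite{Nek}, and the introduction explicitly defers the Kolyvagin-type descent to that reference. Your sketch follows the same route (Gross--Zagier via \eqref{YZZ} for non-triviality of the Heegner class, plus Kolyvagin derivative classes and a Chebotarev/big-image argument for which the no-CM hypothesis is the required input), so at the level of strategy there is nothing to compare: both you and the paper outsource the hard analytic and Galois-cohomological work to \cite{Nek}.

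There is, however, one step in your write-up that is wrong as stated. You claim that since $M=\End_\Q^0(A)$ is a number field of degree $d$, a single non-zero class in $(A(H_c)\otimes\C)^\chi$ forces this space to contain a free $M\otimes_\Q\C$-line, hence has $\C$-dimension at least $d$. This inference fails: $M\otimes_\Q\C\simeq\prod_{\sigma:M\hookrightarrow\C}\C$, and a non-zero element of a module over this ring may be supported on a single factor (it is killed by all but one of the idempotents), so it generates a $\C$-line rather than a free line. To obtain the lower bound $d$ you must invoke the Gross--Zagier formula of \cite{YZZ} in its refined, component-wise form: for each embedding $\sigma$ of $M$, the $\sigma$-component of the N\'eron--Tate height of $P_{c,\varphi}^\chi$ is a non-zero multiple of $L'(1/2,\pi_A^\sigma,\chi)$, so the hypothesis $L'(\pi_A,\chi,1/2)\neq 0$ has to be read as non-vanishing of every conjugate derivative, and only then is $P_{c,\varphi}^\chi$ non-torsion in each $\sigma$-isotypic component and hence a generator of a free $M\otimes_\Q\C$-line. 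The same care is needed on the Selmer side, where the Euler system argument must bound the $\chi$-part as an $M\otimes\Q_p$-module of rank at most one; that is precisely the bookkeeping you (rightly) attribute to Nekov\'a\v r. With this correction your outline matches the intended proof.
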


We first observe that if the ramification set of the quaternion algebra $B$ coincides with 
$\Sigma(A,\chi) - \{\infty\}$, then there always exists a uniformization $J_U = \Jac(X_U) \twoheadrightarrow A$ 
for some open compact subgroup $U$ of $\hat B^\times$; so in (1) we are asking that this $U$ is 
associated with an order.

\begin{thm}\label{Main_Thm1} 
Fix the following objects: 
\begin{enumerate} 
\item a modular abelian variety $A/\Q$ of dimension $d$ and conductor $N^d$,
\item an imaginary quadratic field $K$ and 
\item an anticyclotomic character $\chi$ factoring through the ring class field $H_c$ of $K$ of 
conductor $c\geq 1$ such that the cardinality of $\Sigma(A,\chi)$ is odd.
\end{enumerate} 
Let $B$ denote the indefinite quaternion algebra of discriminant $\Delta$ equal to the product of 
all the finite primes in $\Sigma(A,\chi)$. Then there exists an order $\cR$ of type 
$T = (\Ne;\Nc;\{(L_p,\nu_p)\}_{p\mid\Delta})$ in $B$ and a Heegner point 
$P_{c'}\in X_\cR(H_{c'})$ with $c\mid c'$ such that: 
\begin{enumerate}
\item $A$ is uniformized by $X_{\cR}$, hence there is a surjective morphism $J_\cR \twoheadrightarrow A$ defined over $\Q$;  
\item $N$ divides the level $\Ne\cdot\Nc^2\cdot\prod_{p\mid\Delta}p^{\nu_p}$ of $\cR$; 
\item $c$ divides $c'$. 
 \end{enumerate}
 \end{thm}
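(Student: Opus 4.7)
My plan is to construct $\cR$ and $P_{c'}$ in three stages: (i) pick a candidate order coming from Jacquet--Langlands, (ii) adjust its type and conductor $c'$ to make room for an optimal embedding of $R_{c'}$, and (iii) verify that the uniformization and level conditions survive the adjustment.

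First, the hypothesis that $|\Sigma(A,\chi)|$ is odd guarantees, together with the fact that $\infty\in\Sigma(A,\chi)$, that the finite part has even cardinality, so the indefinite quaternion algebra $B$ of discriminant $\Delta = \prod_{p\in\Sigma(A,\chi)\setminus\{\infty\}} p$ is well defined. The remark following the proposition on $\Sigma(A,\chi)$ shows that $K$ splits $B$, so embeddings $\psi\colon K\hookrightarrow B$ exist. Since the local component $\pi_{f,\ell}$ of the newform $f$ attached to $A$ is square-integrable at every prime in the finite part of $\Sigma(A,\chi)$, Proposition \ref{propJL} furnishes an order $\cR_{\min}\subset B$ of type $T_{\min} = (N/\prod_{p\mid\Delta}p^{\val_p(N)};\,1;\,\{(L_p,\nu_p)\}_{p\mid\Delta})$, with $\nu_p$ and $L_p$ dictated by $\val_p(N)$, such that $f$ lifts to a quaternionic modular form in $S_2(\cR_{\min})$. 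This lift, together with the general results in the paper on modular parametrizations, yields a surjection $J_{\cR_{\min}} \twoheadrightarrow A$ defined over $\Q$; by Remark \ref{remarkJL}, the level of $\cR_{\min}$ is divisible by $N$ (at worst losing a controlled $2$-adic factor that is absorbed into the allowance to enlarge the type).

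Next I would determine, for each rational prime $\ell$, whether $R_c$ embeds optimally into $\cR_{\min,\ell}$ using the local criteria collected in \S\ref{opt-emb-sec}. At primes $\ell\nmid N\Delta c$ the embedding number is automatically $1$. At primes $\ell\mid N_{\mathrm{Eic}}$ (resp.\ $\ell\mid N_{\mathrm{Car}}$) the criterion of Lemma \ref{lemmaC1} (resp.\ Lemma \ref{LemmaC2}) dictates whether the embedding of $R_c$ is optimal and, if not, whether enlarging $c$ by a bounded power of $\ell$ fixes it without destroying optimality elsewhere. At ramified primes $p\mid\Delta$, the division-algebra criteria of \cite{HPS1} recalled in \S\ref{appA} describe the precise compatibility required between $\nu_p$, $L_p$, the ramification of $p$ in $K$, and $\val_p(c)$. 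Whenever the defaults at $\cR_{\min}$ fail to admit an optimal embedding of $R_c$, I would modify the data in one of the following ways: (a) replace $c$ by $c' = cm$ for an integer $m$ coprime to the primes where optimality already holds, so that the local orders $R_{c',\ell}$ now match the existence criteria; (b) introduce a Cartan factor at an auxiliary prime $q\nmid N\Delta c$ split in $K$, which allows one to absorb conductor adjustments without disturbing the uniformization (since this amounts to pulling back along a covering $X_{\cR}\to X_{\cR_{\min}}$); or, at a ramified prime $p\mid\Delta$ where $L_p$ may be chosen, swap the local quadratic extension to the one required by case (1)(c)--(e) or (2)(b)--(k) of \S\ref{appA}. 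Crucially, because the sign condition $|\Sigma(A,\chi)|$ odd encodes the product of local root numbers, the compatibility between local root numbers and the HPS existence criteria (the main discovery of the paper) ensures that in every case not belonging to the Missing Cases, a consistent choice exists.

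Finally, given the adjusted order $\cR$ and conductor $c'$, the count $|\mathrm{Heeg}(\cR,K,c')| = h(R_{c'})\prod_\ell v_\ell(R_{c'},\cR)$ is then strictly positive, producing an actual Heegner point $P_{c'} \in X_{\cR}(H_{c'})$ by the Galois description in \eqref{GaloisCM}. Since the modifications in (b) and (c) above either preserve or enlarge the level in a controlled way and the relevant covering $X_{\cR}\twoheadrightarrow X_{\cR_{\min}}$ induces a surjection on Jacobians, the composition $J_{\cR}\twoheadrightarrow J_{\cR_{\min}}\twoheadrightarrow A$ gives the required uniformization, $N$ continues to divide the level of $\cR$, and $c\mid c'$ by construction. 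The main obstacle, and the reason for the Missing Cases mentioned in the introduction, lies in step (ii) at the primes $p\in\{2,3\}$ where the HPS theory interacts delicately with ramification in $K$ and the conductor of $\chi$: for certain $(\val_p(N),\val_p(c),\text{ram.\ of }p \text{ in }K)$ the local existence criteria exclude every available quadratic extension $L_p$, and no global adjustment through an auxiliary Cartan prime can compensate.
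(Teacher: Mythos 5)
Your overall skeleton (reduce to local optimal embeddings, start from the Jacquet--Langlands order $\cR_{\mathrm{min}}$ of Proposition \ref{propJL}, then adjust) matches the paper, but the execution has a genuine gap at the primes $p\mid\Delta$. Your explicit toolkit for repairing a failed local embedding consists of (a) enlarging $c$, (b) adding an auxiliary Cartan prime, and (c) swapping $L_p$; what is missing is the one move the paper actually relies on at ramified primes of $B$, namely \emph{deepening the order}: replacing the local order $R_{\nu_p}(L_p)$ by the suborder $R_{n'}(L_p)$ with $n'>\nu_p$ chosen with the correct parity relative to $m'=\val_p(c')$ (namely $n'=2m'+1$ if $n$ is odd, and $n'=2m'$ or $2(m'+1)$ according as $p$ is inert or ramified in $K$ if $n$ is even). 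Without this, the case $m>\varrho$ in \S\ref{appA} (1a)--(1e) (and its $p=2$ analogues) is unreachable: enlarging $c$ only makes $m'$ larger, and swapping $L_p$ does not change the constraint $m\leq\varrho$ or $m=\varrho$. Moreover, your modification (b) is not merely unnecessary but harmful: if $q$ is \emph{split} in $K$, then $K\otimes_{\Q}\Q_q$ is the split quadratic algebra and does not embed at all into a non-split Cartan order (a nontrivial idempotent cannot reduce into $C_{\mathrm{ns}}(q^{\nu})\cup\{0\}$), so imposing Cartan level at such a $q$ annihilates $\mathrm{Heeg}(\cR,K,c')$ rather than "absorbing conductor adjustments". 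The paper instead puts into $\Nc$ only primes inert in $K$ with $p\nmid c$, where Lemma \ref{LemmaC2} applies; with your choice $\Nc=1$ no Cartan component is needed at all.

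The second problem is that you import the "Missing Cases" and the root-number compatibility into a statement where they play no role. Theorem \ref{Main_Thm1} is unconditional: precisely because both $m'\geq m$ and $n'\geq n$ are allowed to grow, every local existence criterion in \S\ref{opt-emb-sec} can be met by a suitable parity-matched pair $(m',n')$, and no computation of $\varepsilon_p(E/K,\chi)$ via Tunnell's formulas is required. The delicate interplay between local root numbers and the Hijikata--Pizer--Shemanske criteria, as well as Assumption \ref{assumption2N} and the exceptional cases at $p=2,3$, only enter in Theorem \ref{main_thm}, where one insists on $c'=c$. As written, your argument would establish a conditional version of a theorem that is in fact exception-free, while leaving the genuinely needed step (shrinking $\cR_{\mathrm{min}}$ at $p\mid\Delta$, which preserves the uniformization since $J_{\cR}\twoheadrightarrow J_{\cR_{\mathrm{min}}}\twoheadrightarrow A$) unstated.
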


\begin{proof} 
The problem is local, being equivalent to the existence of optimal local embeddings for all primes $\ell$. 
Fix the order $\cR_\mathrm{min}$ of type $T_\mathrm{min}=(\Ne;\Nc;\{(L_p,\nu_p')\}_{p\mid \Delta})$ and level 
$N_{\mathrm{min}}=\Ne\Nc^2N_{\Delta}$ as in the proof of Proposition \ref{propJL}, choosing the integers $\Ne$ 
and $\Nc$ such that $\Nc$ is divisible only by primes $p$ which are inert in $K$ and do not divide $c$. 

For primes $p \mid \Ne\Nc$ which are split in $K$, one knows that the set of local optimal embeddings of 
the required form is non-empty (cf. Lemma \ref{LemmaC0}).

Fix until the end of the proof a prime $p\mid N_{\mathrm{min}}$ which is inert or ramified in $K$. Let $m$ be 
the $p$-adic valuation of $c$ and set $n:=\val_p(N_{\mathrm{min}})$. If $p$ divides $\Nc$, then we 
can apply Lemma \ref{LemmaC2} and show that the maximal order $R_{c}\otimes_\Z\Z_p$ embeds optimally into  $\cR_\mathrm{min}\otimes_\Z\Z_p$. So suppose from now on that $p$ does not divide $\Nc$. 

Suppose first that $p\not\in\Sigma(A,\chi)$. If $m\geq n/2$ (unramified case) or $m\geq (n-1)/2$ (ramified 
case) then Lemma \ref{lemmaC1} shows that the set of local optimal embeddings of $R_c\otimes_\Z\Z_p$ into  $\cR_\mathrm{min}\otimes_\Z\Z_p$ is non-empty. If these conditions do not hold, replacing $m$ by $m'$ such 
that $m'\geq n/2$ (unramified case) or $m'\geq (n-1)/2$ (ramified case) then the local order $R_{c'}\otimes_\Z\Z_p$ 
with $c' = c\cdot p^{m'-m}$ embeds optimally into $\cR_{\mathrm{min}}\otimes_\Z\Z_p$. 

Suppose now that $p\in\Sigma(A,\chi)$. Take any pair $(m',n')$ satisfying the following condition: 
\begin{itemize}
\item If $n$ is odd, then $n'=2m'+1$;
\item if $n$ is even, then $n'=2m'$ if $p$ is inert in $K$ whereas $n'=2(m'+1)$ if $p$ ramifies in $K$.
\end{itemize} 
Choose also the pair $(m',n')$ so that $m'\geq m$ and $n'\geq n$. Comparing with the results recalled in 
\S \ref{appA}, we see that the set of optimal embeddings of the local quadratic order of conductor $p^{m'}$ 
into the local quaternion order $R_{n'}(L_p) \subseteq \mathcal R_{\mathrm{min}}\otimes_{\Z}\Z_p$ of type 
$(L_p,n')$ is non-zero.
\end{proof}

\begin{corol} 
Let $A/\Q$, $K$ and $\chi$ be as in the previous theorem. If $A$ does not acquire CM over 
any imaginary quadratic field contained in $H_c$ 
 and $L'(\pi_A,\chi,1/2)\neq 0$, then 
$\dim_\C\left(A(H_c)\otimes_\Z\C)^\chi\right) = d$. 
\end{corol}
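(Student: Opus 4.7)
The plan is to deduce this corollary by combining Theorem \ref{Main_Thm1} (which produces a Shimura curve uniformization of $A$ of the prescribed quaternionic type together with a non-trivial Heegner point of suitable conductor) with the Kolyvagin--Nekov\'a\v r type result in Theorem \ref{nekovar} (which turns the non-vanishing of $L'(\pi_A,\chi,1/2)$ into the expected upper bound on the $\chi$-isotypic part of $A(H_c)\otimes_\Z\C$).

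First, I would invoke Theorem \ref{Main_Thm1} applied to the triple $(A,K,\chi)$: since $|\Sigma(A,\chi)|$ is odd, the theorem produces an order $\cR$ in the indefinite quaternion algebra $B$ of discriminant $\Delta=\prod_{p\in\Sigma(A,\chi)\setminus\{\infty\}}p$, a surjective morphism $J_\cR\twoheadrightarrow A$ defined over $\Q$, and a Heegner point $P_{c'}\in X_\cR(H_{c'})$ for some integer $c'$ with $c\mid c'$. This produces conditions (1) and (2) in the hypothesis of Theorem \ref{nekovar}, with $c$ replaced by $c'$.

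Next I would reconcile the fact that the Heegner point is a priori only defined over the (possibly larger) ring class field $H_{c'}$. Since $\chi$ factors through $\Gal(H_c/K)$ and $c\mid c'$, it also factors through $\Gal(H_{c'}/K)$, and any $x\in(A(H_{c'})\otimes_\Z\C)^\chi$ is fixed by $\Gal(H_{c'}/H_c)\subseteq\ker(\chi)$, so it already lies in $A(H_c)\otimes_\Z\C$. This yields the identification
\[
(A(H_c)\otimes_\Z\C)^\chi=(A(H_{c'})\otimes_\Z\C)^\chi.
\]
The CM hypothesis likewise transfers: what is actually needed for the Euler system argument is that $A$ has no CM over imaginary quadratic subfields of the fixed field $H_{c'}^{\ker(\chi)}=H_c^{\ker(\chi)}$, which coincides with the field cut out by $\chi$ already inside $H_c$, so our hypothesis suffices.

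Finally, applying Theorem \ref{nekovar} to the order $\cR$, the uniformization $J_\cR\twoheadrightarrow A$, and the Heegner point $P_{c'}$, together with the assumption $L'(\pi_A,\chi,1/2)\neq 0$, gives $\dim_\C(A(H_{c'})\otimes_\Z\C)^\chi=d$. Combined with the identification above this yields the desired conclusion. The substantive content has already been isolated in the two input theorems: Theorem \ref{Main_Thm1} is the main technical novelty (requiring the detailed optimal embedding analysis of Hijikata--Pizer--Shemanske orders, and the compatibility with the sign of the functional equation), while Theorem \ref{nekovar} packages the Gross--Zagier formula from \cite{YZZ} with Kolyvagin's Euler system machinery. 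The only thing to verify here is the compatibility of conductors, and there is no serious obstacle.
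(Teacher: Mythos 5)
Your proposal is correct and follows essentially the same route as the paper: the paper's proof likewise takes $\cR$ and $c'$ from Theorem \ref{Main_Thm1} and applies Theorem \ref{nekovar}, viewing $\chi$ as a character of $\Gal(H_{c'}/K)$ via the projection $\Gal(H_{c'}/K)\to\Gal(H_c/K)$. Your additional verification that $(A(H_c)\otimes_\Z\C)^\chi=(A(H_{c'})\otimes_\Z\C)^\chi$ and that the CM hypothesis transfers is a welcome explicit spelling-out of details the paper leaves implicit.
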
 
\begin{proof}
Let $\cR$ and $c'$ be as in the statement of Theorem \ref{Main_Thm1} and apply Theorem \ref{nekovar}, viewing $\chi$ as a character 
of $\Gal(H_{c'}/K)$ via the canonical projection $\Gal(H_{c'}/K) \to \Gal(H_c/K)$. 
\end{proof}

\subsection{Proof of Theorem A} 

Theorems \ref{nekovar} and \ref{Main_Thm1}, although giving an Euler System which is sufficient for the proof of 
our main result in the introduction, are not completely satisfying in the sense that they are not effective in the 
computation of the order $\mathcal R$. Suppose we are in the situation of the theorem, so that we are given a modular 
abelian variety $A/\Q$, an imaginary quadratic field $K$ and an anticyclotomic character $\chi$ of conductor $c$ such 
that $\Sigma(A,\chi)$ has odd cardinality. Then we would like to have Heegner points in $\cR_\mathrm{min}$ for a 
choice of minimal parametrization $J_{\cR_\mathrm{min}}\twoheadrightarrow A$ described in Proposition \ref{propJL}, 
or at least of level $\cR$ with $\cR\subseteq\cR_\mathrm{min}$. We begin by discussing a couple of examples.

\begin{example} \label{ex6.5} 
Let $A=E$ be an elliptic curve of conductor $N=p^2q$, with $q$ and $p$ odd distinct primes both inert in $K$. 
Assume that the automorphic representation $\pi_E$ attached to $E$ is supercuspidal at $p$. Let $B$ be the 
quaternion algebra of discriminant $pq$, $\mathcal R =\mathcal R_{\mathrm{min}}$ be the Hijikata--Pizer--Schemanske 
order $\cR=\cR_\mathrm{min}$ of level $N=p^2q$ (and type $(1;1;\{(L_p,2),(L_q,1)\})$, for any choice of quadratic 
ramified extension $L_p/\Q_p$), and let $X_{\cR}$ be its associated Shimura curve. Note that $K$ embeds into 
$B$ because both $p$ and $q$ are inert in $K$. 

Consider first the case of the trivial character $\mathbf{1}$. Then 
$\varepsilon_p(E/K,\mathbf{1})=\mathbf{1}_p(-1)=+1$ (\cite[(5.5.1)]{Deligne2}), 
and therefore $p\not\in\Sigma(E,\mathbf{1})$. So, if the sign of the functional equation of $E/K$ 
is $-1$ then $\Sigma(E,\mathbf{1})=\{\infty\}$ (because any prime in $\Sigma(E,\mathbf{1})$ must 
divide $N$). Note also that, even if $f$ admits a JL lift to $X_{\cR}$, there are no Heegner 
points of conductor $1$ in this curve (cf. \S \ref{appA}, case (1c)). 

For a non-trivial character $\chi$ of conductor $p$, again by case (1c) in \S \ref{appA}, we see that 
in this case Heegner points of conductor $c=p$ do exist in $X_{\cR}$. Therefore, if $\Sigma(E,\chi)$ is odd, 
then the set of Heegner points of conductor $c=p$ in $X_{\cR}$ is non-empty. 

Finally, consider the case of non-trivial conductor $p^m$ with $m\geq 2$. By \cite[p. 1299]{Tunnell} 
we know that $\varepsilon_p(E/K,\chi) = +1$, and therefore, as in the case of the trivial character, if the sign 
of the functional equation of $E/K$ is $-1$ then $\Sigma(E,\mathbf{1}) = \{\infty\}$. Also note that, again as 
in the case of the trivial character, there are no Heegner points of conductor $p^m$ in $X_{\cR}$. 
\end{example}

\begin{example} \label{ex6.6} 
As in the above example, let $A=E$ be an elliptic curve of conductor $N=p^2q$, with $q$ and $p$ 
odd distinct primes and suppose that $q$ is inert and $p$ is ramified in $K$. Identify the Weil--Deligne 
group $W_{\Q_p}$ of $\Q_p$ with $\Q_p^\times$ via the reciprocity map $r_{\Q_p}$, normalized in 
such a way that $r_{\Q_p}(a)$ acts on $\bar\F_p$ by the character $x\mapsto x^{|a|}$, where 
$|\cdot |=|\cdot|_p$ is the $p$-adic absolute value satisfying $|p|=p^{-1}$. Assume that the 
automorphic representation $\pi_E$ attached to $E$ is supercuspidal at $p$, and write it as
$\pi_{E,p}=\mathrm{Ind}_{W_F}^{W_{\Q_p}}(\psi)$ where $F/\Q_p$ is a quadratic extension with 
associated character $\eta$ and $\psi:W_F^\mathrm{ab}\rightarrow\C^\times$ is a quasi-character 
not factoring through the norm map; then we have $\eta\psi=|\cdot|^{-1}$ as quasi-characters of 
$\Q_p^\times$. The above conditions force $\psi$ to have conductor equal to $1$, $p\equiv 3\mod 4$ 
and $\psi_{|\Z_p^\times}=\eta$ (\cite[Cor. 3.1]{Pacetti}). Consider the quaternion algebra $B$ of 
discriminant $pq$, the Hijikata--Pizer--Schemanske order $\cR$ of level $p^2q$ as in the previous example 
and its associated Shimura curve $X_\cR$. Again $\pi_E$ admits a Jacquet--Langlands lift to $X_\cR$. Take 
a character $\chi$ of conductor $p^m$, $m\geq 1$. Then $\varepsilon_p(E/K,\chi)=1$ if $m\geq 4$ by 
\cite[p. 1299]{Tunnell}. 
If $p\equiv 3\mod 4$, then $\eta_{K,p}(-1)=-1$ and therefore $p\in\Sigma(E,\chi)$. Assuming that 
$\Sigma(E,\chi)$ is odd, we see that $q\in\Sigma(E,\chi)$ too. However, there are no Heegner points 
of conductor $p^m$ with $m\geq 1$ in $X_\cR$.
\end{example}
%
%

The above examples motivate our discussion below, leading to the proof of (a slightly refined version of) 
Theorem A in the Introduction. 

Fix for the rest of the article an elliptic curve $E/\Q$ of conductor $N$, an imaginary quadratic field $K$ 
of discriminant $-D$ and a ring class character $\chi$ of conductor $c$ of $K$. Let $\Delta$ be the product 
of the finite primes in $\Sigma(A,\chi)$, which is assumed to have odd cardinality, and let $B$ be the 
quaternion algebra of discriminant $\Delta$. Fix also $\mathcal R:=\cR_\mathrm{min}$ to be the minimal order 
of type $T_{\mathrm{min}}=(\Ne;\Nc;\{(L_p,\nu_p)\}_{p\mid \Delta})$ as in Proposition \ref{propJL}, on which 
the Jacquet--Langlands lift to $B$ of the newform $f \in S_2(\Gamma_0(N))$ associated with $E$ is realized, and 
let $N_{\mathcal R} = \Ne\Nc^2N_{\Delta}$ be its level. We can further assume the (coprime) integers $\Ne$ and 
$\Nc$ satisfy that, for every prime $p$ dividing $\Ne\Nc$,
\[
 p \mid \Nc \text{ if and only if } p \text{ is inert in } K \text{, } \val_p(N) \text{ is even and } p \nmid c.
\]

From now on, we shall make the following assumption on $N$. Observe that under the hypothesis that neither 
$2^3$ nor $3^4$ divide $N$, as in the statement of Theorem A, the assumption below is obviously satisfied.

\begin{assumption}\label{assumption2N}
Let $f\in S_2(\Gamma_0(N))$ be the newform
attached to the elliptic curve $E$ by modularity. 
 With the above notations, the following holds:
 \begin{enumerate}
  \item If $2^3 \mid \Ne$, then either $2$ splits in $K$ or $\val_2(\Ne)$ is odd and 2 is inert in $K$.
  \item  If $\Delta$ is even and $2^3\mid N$, then $2$ is inert in $K$, and if in addition $\pi_E$ is
supercuspidal at 2 then $\val_2(N)$ is odd.  
    \item The $2$-component $\pi_{f,2}$ of the automorphic representation attached to $f$ 
    has minimal Artin conductor among its twists by quasi-characters of $\Q_2^\times$. 
  \item If $\val_3(\Ne) = 4$ and $3$ is inert in $K$, then $\val_3(c)\neq 1$.
 \end{enumerate}
\end{assumption}

\begin{remark}\label{rem-ass}
If $f\in S_2(\Gamma_0(N))$ and $\val_2(N)$ is even and greater or equal than $4$, then $f$ is a twist of a modular 
form of lower level by \cite[Thm. 3.9]{HPS2}. Conditions (1) and (2) in the above assumption rule out these cases; 
in fact, one expects to treat these cases via different methods. If $f=g\otimes\chi$, then one expects to construct 
points on the modular abelian varieties attached to $g$, and then, using twisting techniques, to construct points on 
the elliptic curve. It seems possible that condition (3) can be treated by similar considerations. 
\end{remark}

Write also $R_c$ for the order of conductor $c$ in $K$ of conductor $c$ as usual. Our goal now is to 
investigate under which conditions $R_c$ embeds optimally into $\cR_{\mathrm{min}}$. And in those cases 
where this does not happen, we must find a suitable suborder of $\cR_\mathrm{min}$ such that $R_c$ does 
optimally embed into it. 

The problem is clearly local, and it suffices to study it at those primes dividing $N$. So fix from now 
on a prime $p\mid N$, and set the following notations. We write $m := \val_p(c)$ for the $p$-adic valuation 
of $c$, and $n := \val_p(N_{\mathcal R})$ for that of $N_{\mathcal R}$. By the discussion prior to 
Proposition \ref{propJL}, observe that $n$ coincides with $\val_p(N)$ if $p$ is odd (cf. also Remark 
\ref{remarkJL}). And if $p=2$, under Assumption \ref{assumption2N} the only cases with $n\neq \val_2(N)$, and 
for which this unequality is relevant in our discussion, are those in Lemma \ref{lemma:St-inert} below. In 
view of this, except for Lemma \ref{lemma:St-inert} we may use that $n=\val_p(N)$ without further explicit 
mention. Then we denote by $\mathcal E_p(m,n)$ the set of (local) optimal embeddings of $R_c \otimes_{\Z} \Z_p$ 
into $\cR \otimes_{\Z} \Z_p$. Recall that the conditions that characterize the non-emptiness 
of $\mathcal E_p(m,n)$, in each of the possible cases, have been collected in Section \ref{opt-emb-sec}. 
If $p$ is not split in $K$, then we write $\chi_p$ for the component of $\chi$ at the unique prime of $K$ 
above $p$. In that case, notice that $m = c(\chi_p)$, the (exponent of the) conductor of $\chi_p$.

If the prime $p$ divides $\Ne\Delta$ and $n'\geq n$ is an integer, then we define a suborder $\cR'$ of $\cR$ 
in the following way. If $p\mid \Ne$, then we define $\cR'$ to be locally equal to $\cR$ at every prime 
distinct from $p$, and such that $\cR' \otimes_{\Z}\Z_p$ is the Eichler suborder of level $p^{n'}$ (hence 
index $p^{n'-n}$) in $\mathcal R \otimes_{\Z}\Z_p$. In particular, $\val_p(N_{\mathcal R'}) = n'$. And if 
$p\mid \Delta$, then $\mathcal R'$ is obtained from $\mathcal R$ by replacing the local data $(L_p,n)$ at 
$p$ in the type of $\mathcal R$ by the data $(L_p,n')$. Besides, given an integer $m'\geq m$, we denote by 
$R_{c'}$ the suborder of $R_c$ of conductor $c'=cp^{m'-m}$. Finally, given two integers $m'\geq m$ and 
$n'\geq n$, we will simply denote by $\mathcal E_p(m',n')$ the set of (local) optimal embeddings of 
$R_{c'} \otimes_{\Z} \Z_p$ into $\cR' \otimes_{\Z} \Z_p$.

%

First we consider the case where $p$ does not belong to the set $\Sigma(E,\chi)$, so that $B$ is split at $p$.
 
\begin{lemma} 
If $p\not\in\Sigma(E,\chi)$, then $\mathcal E_p(m,n)\neq\emptyset$.
\end{lemma}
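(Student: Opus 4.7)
The plan is to reduce the assertion to a single difficult subcase by dispensing of the easy ones via the local lemmas of Section 3.2. Since $p\notin\Sigma(E,\chi)$ we have $p\nmid\Delta$, so $B\otimes_{\Q}\Q_p\simeq \M_2(\Q_p)$ and the local order $\mathcal{R}\otimes_{\Z}\Z_p$ is either a maximal order (if $p\nmid N_{\cR}$), an Eichler order of level $p^n$ (if $p\mid\Ne$), or a non-split Cartan order of level $p^n$ (if $p\mid\Nc$).

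I would first handle the easy cases. If $\mathcal{R}_p$ is maximal then $n=0$ and $h(m,0)=1$, so $\mathcal{E}_p(m,n)\neq\emptyset$ automatically. If $p\mid \Nc$, the assignment of $\Nc$ stipulated just before Assumption \ref{assumption2N} forces $p$ inert in $K$ with $m=\val_p(c)=0$; thus $R_c\otimes\Z_p=\cO_{K_p}$ is the maximal order of the unramified quadratic extension, and Lemma \ref{LemmaC2} supplies an optimal embedding. If $p\mid\Ne$ and $p$ splits in $K$, then $K\otimes_{\Q}\Q_p$ is the split quadratic algebra and Lemma \ref{LemmaC0} yields an optimal embedding into any Eichler order.

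The remaining case is $p\mid\Ne$ with $p$ inert or ramified in $K$. By Lemma \ref{lemmaC1} the conclusion reduces to verifying $m\geq n/2$ in the unramified case and $m\geq (n-1)/2$ in the ramified case. For this I would invoke the content of the hypothesis $p\notin\Sigma(E,\chi)$, namely $\varepsilon(1/2,\pi_{E,p},\chi_p)=\eta_{K,p}(-1)$, and classify $\pi_{E,p}$ as unramified principal series, ramified principal series, special (Steinberg), or supercuspidal. In the principal series case the required inequality is automatic from the definition of the new-vector level. In the remaining cases, I would apply the Tunnell--Saito dichotomy together with explicit local root number computations (using Deligne's formulas and the self-duality $\chi_p\vert_{\Q_p^\times}=\mathbf{1}$) to match the sign condition with the conductor inequality; a key observation is that the $p\in\Nc$ configuration has already been excluded, so the only potentially problematic pair $(m,n)=(0,2k)$ with $p$ inert does not arise.

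The main obstacle is the last paragraph: the local root number formulas branch into many subcases depending on the parity of $n$, the ramification type of $K_p$, and the conductor $p^m$ of $\chi_p$, and the verification is particularly delicate when $p\in\{2,3\}$, which is precisely why Assumption \ref{assumption2N} was imposed. I expect the cleanest way to organize the argument is to treat separately the parity of $n$ and, for each, to use the dichotomy theorems relating the sign of $\varepsilon(1/2,\pi_{E,p},\chi_p)$ to the existence of a $K_p^\times$-equivariant functional on the relevant $\mathcal{R}_p^\times$-fixed subspace of $\pi_{E,p}$.
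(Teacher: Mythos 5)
Your reduction is exactly the one the paper uses: route $p\mid\Nc$ to Lemma \ref{LemmaC2} (where $m=0$ by the choice of $\Nc$), route the split case to Lemma \ref{LemmaC0}, and reduce the remaining case $p\mid\Ne$ with $p$ non-split to the conductor inequality $m\geq n/2$ (unramified) or $m\geq (n-1)/2$ (ramified) of Lemma \ref{lemmaC1}, to be extracted from the local sign condition $\varepsilon_p(E/K,\chi)=\eta_{K,p}(-1)$ via Tunnell's formulas. So the skeleton is right and matches the paper.

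The problem is that the last step is not a routine verification you can defer: it \emph{is} the lemma, and you have only announced a plan for it. The paper's proof spends essentially all of its length there, and the argument is not uniform: for $n=1$ with $p$ inert and $p\nmid c$ one must show the configuration contradicts $p\notin\Sigma(E,\chi)$ (since $\varepsilon_p=-1$ while $\eta_{K,p}(-1)=1$); for $p=3$, $n\in\{3,5\}$ one needs \cite[Prop.~2.8]{Tunnell} when $3$ is inert but \cite[Cor.~1.9.1]{Tunnell} or \cite[Prop.~1.10]{Tunnell} when $3$ is ramified, with a separate comparison of $c(\chi_3)$ against the conductor of the inducing quasicharacter; for $p=2$, $n\in\{3,5,7\}$ one needs \cite[Prop.~3.7]{Tunnell}, \cite[Cor.~1.9.1]{Tunnell} and \cite[Lemma~3.2]{Tunnell} respectively, and the $n=7$ case uses the $2$-minimality clause of Assumption \ref{assumption2N}; and for $p=3$, $n=4$ the inert subcase is not settled by a root-number computation at all but by fiat via Assumption \ref{assumption2N}(4). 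None of this is visible from your sketch, and without it the claim is unproved. Two smaller points: your assertion that in the ramified principal series case ``the required inequality is automatic from the definition of the new-vector level'' is not justified --- for odd $p$ such a representation has $n=2$ and one still needs $m\geq 1$, which comes from $p\mid c$ (the $p\nmid c$ inert configuration having been absorbed into $\Nc$), not from the newvector level; and $(m,n)=(0,2k)$ with $p$ inert is not ``the only potentially problematic pair'' --- the pairs with $0\le m<n/2$ and $n>1$ odd, and the ramified-$K$ pairs, are precisely where all the work lies.
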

\begin{proof}
First observe that if $p\not\in\Sigma(E,\chi)$ then $p$ divides $\Ne\Nc$. Having said this, notice that if 
$p\mid \Nc$ then $\mathcal E_p(m,n)$ is non-empty by Lemma \ref{LemmaC2} (because if $p \mid \Nc$ then $m=0$). 
So we assume for the rest of the proof that $p$ divides $\Ne$. By our choice of $\Ne$ and $\Nc^2$, 
we shall distinguish three cases:
\begin{enumerate}
\item $p$ is split in $K$;
\item $p$ is inert or ramified in $K$ and $n$ is odd; 
\item $p$ is inert or ramified in $K$, $n$ is even and $p\mid c$.
\end{enumerate} 

If $p$ is in case (1), then Lemma \ref{LemmaC0} implies that $\mathcal E_p(m,n)$ is non-empty. Suppose 
that $p$ is in case (2), and assume first that $n=1$, which is the only possible value if $p\geq 5$. If 
$p$ ramifies in $K$, then $\mathcal E_p(m,n) \neq \emptyset$ by part (ii) in Lemma \ref{lemmaC1}. If $p$ 
is inert in $K$, then we split the discussion according to whether $p\nmid c$ or $p\mid c$. In the former 
case, $\varepsilon_p(E/K,\chi) = \varepsilon_p(E/K,\mathrm{1}) = -1$ but $\eta_{K,p}(-1)=1$, thus $p$ should 
be in $\Sigma(E,\chi)$, contradicting our hypotheses. And in the latter case, we have $m\geq 1$ and 
therefore $2m \geq n = 1$, hence Lemma \ref{lemmaC1} shows that $\mathcal E_p(m,n)\neq\emptyset$. Thus 
we are left with the cases where $p=2$ or $3$ and $n=\val_p(N)>1$ is odd.

\begin{itemize}
 \item If $p=3$, then $n$ can be either $3$ or $5$. Then $\pi_{E,3}$ is supercuspidal induced from a 
 quasicharacter $\psi$ of conductor $n-1$ of a ramified quadratic extension $F_3$ of $\Q_3$. If $3$ is inert 
 in $K$, we know on the one hand by Lemma \ref{lemmaC1} that $\mathcal E_3(m,n)\neq \emptyset$ if and only if 
 $m \geq n/2$, hence if and only if $m > (n-1)/2$. On the other hand, being $3$ inert in $K$ the assumption 
 that $3\not\in \Sigma(E,\chi)$ tells us that $\varepsilon_3(E/K,\chi) = 1$, and by \cite[Prop. 2.8]{Tunnell} 
 this holds if and only if $m > (n-1)/2 = 1$. Thus it follows that $\mathcal E_3(m,n)\neq \emptyset$. Now 
 suppose that $3$ ramifies in $K$. Then $\eta_{K,3}(-1)=-1$, hence $\varepsilon_3(E/K,\chi)=-1$ because 
 $3\not\in \Sigma(E,\chi)$. By Lemma \ref{lemmaC1} we have that $\mathcal E_3(m,n)\neq \emptyset$ if and 
 only if $m\geq (n-1)/2$. We next show that $\varepsilon_3(E/K,\chi)=-1$ implies that $m\geq (n-1)/2$. If 
 $F_3\neq K_3$, \cite[Cor. 1.9.1]{Tunnell} implies that $\varepsilon_3(E/K,\chi) = (-1)^{c(\psi\chi_3)}$. If 
 it were $m=c(\chi_3) < c(\psi) = n-1$, we would have $c(\psi\chi_3)=c(\psi) = n-1$. Since $n$ is $3$ or $5$, 
 in both cases we would have $\varepsilon_3(E/K,\chi)=1$, thus it must be $m > n-1 > (n-1)/2$, and therefore 
 $\mathcal E_3(m,n)\neq \emptyset$. If in contrast $F_3 = K_3$, we can apply \cite[Prop. 1.10]{Tunnell} 
 instead, which tells us that $\varepsilon_3(E/K,\chi) = (-1)^{c(\psi^{\tau}\chi_3)+c(\psi\chi_3)}$, where 
 $\tau$ is the non-trivial automorphism in $\Gal(K_3/\Q_3)$. Similarly as before, observe now that if 
 $c(\psi)\neq c(\chi_3)$, then $c(\psi^{\tau}\chi_3)=c(\psi\chi_3)=\max(c(\psi),c(\chi_3))$, and as a 
 consequence $\varepsilon_3(E/K,\chi) = 1$. Thus it must be $m=c(\chi_3)=c(\psi)=n-1 >(n-1)/2$, and we 
 conclude that $\mathcal E_3(m,n)\neq \emptyset$. 
 
 \item If $p=2$, then $n$ can be either $3$, $5$ or $7$. And by Assumption \ref{assumption2N} i), we may suppose 
 that $2$ is inert in $K$, so that $2\not\in \Sigma(E,\chi)$ implies that $\varepsilon_2(E/K,\chi) = 1$.
 
 Suppose first that $n=3$. Then \cite[Prop. 3.7]{Tunnell} implies that $m\geq 2$, and then by part (i) 
 of Lemma \ref{lemmaC1} we deduce that $\mathcal E_2(m,n)\neq \emptyset$. 
  
 Suppose now that $n=5$. Now $\pi_{E,2}$ is supercuspidal induced from a quasicharacter of conductor $3$ on a 
 ramified extension of $\Q_2$ with discriminant valuation $2$. If the conductor of $\chi$ were $m < 3$, then 
 \cite[Cor. 1.9.1]{Tunnell} would imply that $\varepsilon_2(E/K,\chi) = -1$, thus we deduce that $m\geq 3$. And 
 then by part (i) of Lemma \ref{lemmaC1} we conclude that $\mathcal E_2(m,n)\neq \emptyset$. 
 
 If $n=7$, then $\pi_{E,2}$ is supercuspidal of exceptional type, and its conductor is minimal with respect to 
 twist. Then $\varepsilon_2(E/K,\chi) = 1$ implies, by \cite[Lemma 3.2]{Tunnell}, that $m\geq 4$. But then we deduce 
 that $\mathcal E_2(m,n)\neq \emptyset$ thanks to Lemma \ref{lemmaC1}, part (i)  
 (here, and here only, we use condition iii) in Assumption \ref{assumption2N}).\end{itemize}

Finally, suppose that $p$ is in case (3). Again let us start with the case $n = 2$, which is the only possible 
case if $p\geq 5$. Since $m = \val_p(c) \geq 1$, we see that $2m \geq n$, hence Lemma \ref{lemmaC1} implies that 
$\mathcal E_p(m,n) \neq \emptyset$, regardless $p$ is inert or ramified in $K$. Thus we are again left with the 
cases where $p=2$ or $3$ and $n = \val_p(N) > 2$ is even. 

By Assumption \ref{assumption2N} i), the case $p=2$ does not arise, so we assume that $p=3$. Then the only possible 
value for $n$ is $4$. In this case $\pi_{E,3}$ is supercuspidal induced from a quasicharacter of conductor $2$ of 
the unramified quadratic extension of $\Q_3$. If $3$ is ramified in $K$, then $\eta_{K,3}(-1)=-1$, and therefore 
$\varepsilon_3(E/K,\chi)=-1$. By \cite[Cor. 1.9.1]{Tunnell}, if $m=1$ then we would have $\varepsilon_3(E/K,\chi)=1$, 
hence we see that $m\geq 2$. On the other hand, part (ii) in Lemma \ref{lemmaC1} tells us that $\mathcal E_3(m,n)\neq \emptyset$ 
if and only if $m\geq 2$. Thus we deduce that indeed $\mathcal E_3(m,n)\neq \emptyset$.
 
If in contrast $3$ is inert, then Assumption \ref{assumption2N} iii) implies that $m\geq 2$, and by part (i) in Lemma 
\ref{lemmaC1} we conclude that $\mathcal E_3(m,n)\neq \emptyset$.
%
\end{proof}

Next we will deal with the case that $p \in \Sigma(E,\chi)$, or equivalently $p \mid \Delta$. This means that  
$\varepsilon_p(E/K,\chi) = -\eta_{K,p}(-1)$. So if $p$ is odd, then  
\[
\varepsilon_p(E/K,\chi)=\begin{cases}
-1 & \text{ if $p$ is inert in $K$},\\
-1 & \text{ if $p$ is ramified in $K$ and $p\equiv 1\mod 4$},\\
1 & \text{ if $p$ is ramified in $K$ and $p\equiv 3 \mod 4$}.
\end{cases} 
\]

Let $\pi_E$ be the automorphic representation attached to $E$, and $\pi_{E,p}$ be its $p$-th component. The 
(exponent of the) conductor of $\pi_{E,p}$ is $\val_p(N)$. 

We will split our discussion into distinct lemmas, to distinguish between the cases where $\pi_{E,p}$ is 
supercuspidal or Steinberg. If $\pi_{E,p}$ is supercuspidal, then it is well-known that $\val_p(N)\geq 2$. 
For $p\geq 5$ this means that $\val_p(N)=2$, whereas for $p=3$ (resp. $p=2$) we have $2\leq \val_3(N)\leq 5$ 
(resp.  $2\leq \val_2(N)\leq 8$). Besides, if $\pi_{E,p}$ is Steinberg, then $\val_p(N)$ can only be $1$ 
or $2$ if $p$ is odd, whereas if $p=2$ then $\val_2(N) \in \{1,4,6\}$. However, the reader should keep in mind 
that under Assumption \ref{assumption2N}, some of the previous cases with $p=2$ do not appear in our discussion.


%
%
%
%

\begin{lemma}
If $p\in\Sigma(E,\chi)$, $\pi_E$ is supercuspidal at $p$ and $p$ is inert in $K$ then there 
exists $n'\geq n$ such that $\mathcal E_p(m,n')\neq\emptyset$. 
\end{lemma}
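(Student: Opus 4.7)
The plan is to exploit the local Jacquet--Langlands classification to pin down the type $(L_p,n)$ of $\mathcal{R}\otimes\Z_p = R_n(L_p)$, then read off from the tables in Subsection~\ref{appA} (division case) which values of $n'\geq n$ yield $\mathcal{E}_p(m,n')\neq\emptyset$, and finally verify the parity/size constraints using the hypothesis $p\in\Sigma(E,\chi)$ via Tunnell-type formulas. Recall that since $p$ is inert in $K$, the local field $K_p = K\otimes_\Q\Q_p$ is the unramified quadratic extension of $\Q_p$, and $\eta_{K,p}(-1)=1$, so the hypothesis $p\in\Sigma(E,\chi)$ is equivalent to $\varepsilon_p(E/K,\chi)=-1$.

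For odd $p$, the classification recalled before Proposition~\ref{propJL} dictates that $L_p$ is unramified when $n=\val_p(N)$ is odd, and $L_p$ is ramified when $n$ is even. In the first case ($L_p$ unramified), we are in case (1a) of Subsection~\ref{appA}: setting $n'=2\varrho+1$ with $\varrho := \max(\lfloor n/2\rfloor, m)$, we have $n'\geq n$, $n'$ odd, and $m\leq \varrho$, so $h(m,n',L_p)\neq 0$. In the second case ($L_p$ ramified), we are in case (1c), which requires $n'=2\varrho$ with $m=\varrho$; hence we must take $n'=2m$ and we need to verify $n'\geq n$, i.e.\ $m\geq n/2$. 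This is the main obstacle. Since $\pi_{E,p}$ is supercuspidal of even conductor $n$, it is induced from a quasi-character $\psi$ of a ramified quadratic extension $F/\Q_p$, with $c(\psi)=n-1$. Because $K_p$ is unramified and $F$ is ramified we have $F\not\simeq K_p$, so Tunnell's formulas (\cite[Cor.~1.9.1]{Tunnell} when $p$ is odd, and its 2-adic companion) give
\[
\varepsilon_p(E/K,\chi) \;=\; (-1)^{c(\psi\cdot(\chi_p\circ N_{F/\Q_p}))}\cdot\chi_p(-1).
\]
If one had $m=c(\chi_p)<n/2$, then $c(\chi_p\circ N_{F/\Q_p})<n-1=c(\psi)$, so $c(\psi\cdot(\chi_p\circ N_{F/\Q_p}))=c(\psi)=n-1$; combining with a parity check on $\chi_p(-1)$ forces $\varepsilon_p(E/K,\chi)=+1$, contradicting $p\in\Sigma(E,\chi)$. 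Hence $m\geq n/2$ and $n':=2m$ works.

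For $p=2$, the JL-classification is more delicate (listed in items (2a)--(2l) of Subsection~\ref{appA}), but the strategy is identical. Under Assumption~\ref{assumption2N}(ii) with $2$ inert in $K$, the admissible values of $n$ together with the minimality condition~\ref{assumption2N}(iii) restrict $(L_p,n)$ to a small list, for which we again split into the $L_p$-unramified and $L_p$-ramified subcases. The unramified subcase (cases (2a), (2f)) again puts us in the regime $m\leq\varrho$, so any sufficiently large odd $n'$ works. The ramified subcase (cases (2b),(2c),(2e),(2h),(2i),(2j)) demands $n'=2\varrho$ with $m$ equal to $\varrho$ or $\varrho-1$ (or $\varrho-2$), which reduces to showing $2m\geq n-O(1)$; this is established once more from $\varepsilon_2(E/K,\chi)=-1$ via \cite[Cor.~1.9.1, Prop.~1.10, Lemma~3.2]{Tunnell}, precisely in the same spirit as in the proof of the preceding lemma for $p\notin\Sigma(E,\chi)$.

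The hardest part of the argument is the ramified-$L_p$ case: here the table in Subsection~\ref{appA} forces the equality $n'=2m$ (no room to grow $n'$ further without also growing $m$), so the existence of $n'\geq n$ genuinely depends on a lower bound for the conductor of $\chi_p$, and this bound has to be extracted from Tunnell's analysis of the local root number for supercuspidal representations. The upshot is a pleasant match: the same local condition $\varepsilon_p(E/K,\chi)=-1$ that puts $p$ into $\Sigma(E,\chi)$ and makes $B$ ramified at $p$ also forces $c(\chi_p)$ to be large enough to guarantee the existence of a local optimal embedding.
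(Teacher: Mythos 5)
Your overall strategy coincides with the paper's: pin down the local type $(L_p,n)$ via the Jacquet--Langlands classification, read off the embedding criteria from the division-algebra tables, and use Tunnell's local root number formulas to show that $\varepsilon_p(E/K,\chi)=-1$ forces $c(\chi_p)$ into the admissible range. Your treatment of the odd-$n$ (unramified $L_p$) case is correct and in fact slightly cleaner than the paper's: since case (1a) only requires $m\leq\varrho$, taking $n'=\max(n,2m+1)$ always works, with no input from Tunnell needed.

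However, there is a genuine gap in the even-$n$ case. Your key assertion that a supercuspidal $\pi_{E,p}$ of even conductor is induced from a quasi-character of a \emph{ramified} quadratic extension $F/\Q_p$ with $c(\psi)=n-1$ is false: the conductor of $\mathrm{Ind}_{W_F}^{W_{\Q_p}}\psi$ is $2c(\psi)$ when $F$ is unramified, so even conductor is perfectly compatible with $F$ unramified (e.g.\ $n=2$ with $c(\psi)=1$, and, crucially, the case $p=3$, $n=4$, where the representation \emph{is} induced from a conductor-$2$ character of the unramified quadratic extension of $\Q_3$). Since $p$ is inert in $K$, in that situation $F\simeq K_p$, so \cite[Cor.~1.9.1]{Tunnell} --- which requires $F\neq K_p$ --- does not apply, and the expression $\chi_p\circ N_{F/\Q_p}$ is in any case ill-typed ($\chi_p$ is a character of $K_p^\times$, not of $\Q_p^\times$; Tunnell's formula for $F\neq K$ involves base-changing both characters to the compositum $FK_p$). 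The paper handles exactly this subcase with \cite[Prop.~1.10]{Tunnell} (the $F=K_p$ case), showing $m\geq 2$ and then taking $n'=2m$; and for $n=2$ it avoids the issue entirely by noting that $m=0$ would give $\varepsilon_p=+1$ by Deligne's formula, so $m\geq 1=n/2$ suffices. You would need to repair your argument by splitting according to whether $F$ is ramified or unramified and invoking the correct formula in each case. Finally, your $p=2$ discussion is only a sketch: several of the cases you cite ((2c), (2e), (2i), (2j)) concern $K_2$ ramified and are irrelevant here, and under Assumption \ref{assumption2N} the supercuspidal-at-$2$, inert situation reduces to $n=2$ or $n$ odd, which still requires the case-by-case verification carried out in the paper.
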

\begin{proof} 
The assumptions $p\in\Sigma(E,\chi)$ and $p$ inert in $K$ imply that $\varepsilon_p(E/K,\chi) = -1$. Suppose first that 
$p$ is odd. We have the following cases:
\begin{enumerate}
 \item $n=2$. If $m=0$, then $\varepsilon_p(E/K,\chi)=1$ by \cite[(5.5.1)]{Deligne2}, so we may assume 
 that $m\geq 1$. But then defining $n':=2m \geq n$ we conclude by case (1c) in \S \ref{appA} that 
 $\mathcal E_p(m,n')\neq\emptyset$.
 
 \item $p=3$ and $n > 2$. In this case $n = \val_3(N)$ can be either $3$, $4$ or $5$. If $n\neq 4$, then 
 $\pi_{E,p}$ is induced from a quasicharacter $\psi$ of conductor $n-1$ of a ramified quadratic 
 extension of $\Q_3$. On the one hand, from \S \ref{appA} case (1a) we see that $\mathcal E_3(m,n)\neq\emptyset$ 
 if and only if $m\leq (n-1)/2$. And on the other hand, by \cite[Prop. 2.8]{Tunnell} one has that 
 $\varepsilon_3(E/K,\chi) = -1$ if and only if $m\leq (n-1)/2$. Thus we conclude that $\mathcal E_3(m,n)\neq\emptyset$.

 Suppose now that $n=4$. In this case, $\pi_{E,3}$ is induced from a quasicharacter $\psi$ of conductor $2$ of the 
 unramified quadratic extension of $\Q_3$. If $\chi_p$ is unramified, that is $m=0$, then by \cite[(5.5.1)]{Deligne2} 
 we would have $\varepsilon_p(E/K,\chi)=1$, hence it must be $m\geq 1$. However, if $m=1$ then one can use 
 \cite[Prop. 1.10]{Tunnell} (cf. also the proof of Proposition 3.5 in {\em loc. cit.}) to show that $\varepsilon_3(E/K,\chi) = 1$, 
 thus it follows that $m\geq 2$. But then for $n' := 2m \geq n$ we have that $\mathcal E_3(m,n')\neq \emptyset$ by case 
 (1c) in \S \ref{appA}.
\end{enumerate}

 Now we assume that $p=2$. Again we can split the discussion into cases.
\begin{itemize}
 \item[i)] First suppose $n = \val_p(N) = 2$. As above, if $m=0$ then $\varepsilon_p(E/K,\chi)=1$, hence it 
 must be $m\geq 1$. Letting $n':=2m \geq n$, case (2b) now ensures that $\mathcal E_p(m,n')\neq\emptyset$.
 
 \item[ii)] If $\val_p(N) > 2$, then Assumption \ref{assumption2N} ii) implies that $n$ is odd. If $n=3$, 
 on the one hand by \cite[Prop. 3.7]{Tunnell} we have that $\varepsilon_2(E/K,\chi) = -1$ if and only if 
 $m\leq 1$. And on the other hand, case (2f) in \S \ref{appA} tells us that $\mathcal E_2(m,n)\neq\emptyset$ 
 if and only if $m\leq 1$, thus we conclude that $\mathcal E_2(m,n)\neq\emptyset$. If $n$ is either $5$ or $7$, 
 again according to \S \ref{appA} case (2f) we see that if $m \leq (n-1)/2$ then $\mathcal E_2(m,n)\neq \emptyset$. 
 If not, defining $n':=2m+1$ we will have $\mathcal E_2(m,n')\neq \emptyset$.
\end{itemize}
This concludes the proof. 
\end{proof}


\begin{lemma}
If $p\in\Sigma(E,\chi)$, $\pi_E$ is supercuspidal at $p$ and $p$ is ramified in $K$ then 
there exists $n'\geq n$ such that $\mathcal E_p(m,n')\neq\emptyset$. 
\end{lemma}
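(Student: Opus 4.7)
The proof will follow the template of the two preceding lemmas. Since $p\mid\Delta$, the local order at $p$ is of the form $R_n(L_p)$ in the division algebra $D_p$; by Proposition~\ref{propJL}, $L_p$ is forced to be unramified when $n$ is odd and ramified when $n$ is even. Because $K$ is ramified at $p$, the only relevant optimal-embedding criteria from \S\ref{appA} are (1b), (1d), (1e) in the odd-$p$ case, and (2c), (2d), (2g), (2i)--(2l) in the $p=2$ case. The hypothesis $p\in\Sigma(E,\chi)$ translates into the constraint $\varepsilon_p(E/K,\chi)=-\eta_{K,p}(-1)$, which together with the explicit local root-number formulas of Tunnell~\cite{Tunnell} and Deligne~\cite[(5.5.1)]{Deligne2} will pin down the admissible values of $m=\val_p(c)$.

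The general recipe is as follows. If $n$ is odd, I would take $n'=2m+1\geq n$; this places us in case (1b) or (2g), whose criterion $m=\rho'$ is then automatically satisfied. If $n$ is even, I would use the freedom left by Proposition~\ref{propJL} to choose the ramified quadratic extension $L_p$ so that $L_p\simeq K_p$; then taking $n'=2(m+1)\geq n$ puts us in case (1e) or (2l), whose criterion $m\leq\rho'-1$ is likewise satisfied. In either case the point is to verify that the choice $n'\geq n$ is compatible, i.e.\ that $m$ is already as large as the $\varepsilon$-factor analysis forces it to be.

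For odd $p$ the possible values of $n=\val_p(N)$ are $n=2$ (always) and, when $p=3$, also $n\in\{3,4,5\}$. For the even values the trick above applies directly; for the odd values $n\in\{3,5\}$ at $p=3$, where $L_p$ is constrained to be unramified, the bound $m\geq(n-1)/2$ needed to make $n'=2m+1\geq n$ work is established from $\varepsilon_3(E/K,\chi)=-\eta_{K,3}(-1)$ via \cite[Prop.~2.8]{Tunnell} and \cite[Cor.~1.9.1]{Tunnell}, exactly as in the inert supercuspidal case already handled.

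The real work is at $p=2$. Assumption~\ref{assumption2N}(ii),(iii) restricts us to $n\in\{2,3,5,7\}$, but the analysis must still split according to whether $\pi_{E,2}$ is induced from an unramified or a ramified quadratic extension (and in the latter case, according to the type of that extension), and according to the $t$-value and the symbol $\mu(L_p,K_p)$ determined by the position of $K_2$ among the six ramified quadratic extensions of $\Q_2$. I expect the main obstacle to be the bookkeeping at this step: one must combine Propositions~3.5 and 3.7 and Lemma~3.2 of \cite{Tunnell} with the criteria (2c), (2d), (2i)--(2l) to verify that in each of the roughly dozen surviving subcases the constraint on $m$ produced by the $\varepsilon_2$-equation is exactly what is needed to make the corresponding optimal-embedding criterion non-empty for some $n'\geq n$. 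The mechanism is the same as in the preceding lemmas, but the 2-adic taxonomy makes the accounting heavier.
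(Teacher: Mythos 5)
Your treatment of odd $p$ follows the same route as the paper's proof: for $n$ even you pass to $n'=2(m+1)$ and invoke cases (1d)/(1e) of \S\ref{appA} (note only that for $n=4$ and $m\leq 1$ one has $2(m+1)\leq n$, so there you must stay at $n'=n$, which case (1e) with $L_3\simeq K_3$ permits since then $m\leq\varrho-1=1$); for $n\in\{3,5\}$ at $p=3$ you correctly identify that the whole content is the bound $m\geq(n-1)/2$, extracted from $\varepsilon_3(E/K,\chi)=-\eta_{K,3}(-1)=+1$ via \cite[Prop.~2.8]{Tunnell}, after which $n'=2m+1$ and case (1b) finish the job.

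The gap is at $p=2$, which you have mis-scoped and left undone. The list $n\in\{2,3,5,7\}$ you quote is the one relevant to the \emph{inert} supercuspidal lemma. Here $2\in\Sigma(E,\chi)$ means $2\mid\Delta$, and $2$ is ramified in $K$, so part (ii) of Assumption \ref{assumption2N} (which forces $2$ to be inert whenever $2\mid\Delta$ and $2^3\mid N$) leaves only the single case $n=\val_2(N)=2$; there is no taxonomy over odd $2$-adic conductors, no role for case (2g), and no case ``(2l)'' (the list in \S\ref{appA} ends at (2k)). Moreover your device of choosing $L_2\simeq K_2$ is not available in general: the Jacquet--Langlands recipe preceding Proposition \ref{propJL} forces $L_2\in\{\Q_2(\sqrt{3}),\Q_2(\sqrt{7})\}$, while $K_2$ may be one of the four ramified quadratic extensions with $t(K_2)=2$. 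What actually closes this case is that (2c), (2d) and (2e) are all satisfied at level $n'=2(m+1)\geq 2$, because their criteria read $m=\varrho'-1$ (or $m\leq\varrho'-1$) and $\varrho'=m+1$; the paper additionally pins down $m$ via \cite[Prop.~3.5]{Tunnell} (namely $\varepsilon_2=+1$ if and only if $m\geq 2$, whence $m=0$ when $\eta_{K,2}(-1)=1$ and $m\geq2$ otherwise), but none of the heavy $2$-adic bookkeeping over the six ramified extensions that you anticipate is required. As written, your proposal defers exactly the step it mis-describes, so it does not yet constitute a complete proof.
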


\begin{proof}
As in the previous lemma, we assume first that $p$ is odd. We have the following cases:

\begin{enumerate}
 \item Suppose $n=\val_p(N)=2$. If $m=0$, we deduce from \S \ref{appA} (cases (1d) or (1e)) that $\mathcal E_p(m,n)\neq\emptyset$. 
 In contrast, if $m > 0$ the set $\mathcal E_p(m,n)$ is empty. But by virtue of \S \ref{appA}, case (1d) or (1e), 
 for $n':=2(m+1) > n$ we have $\mathcal E_p(m,n')\neq\emptyset$.
 
 \item Suppose that $p=3$ and $n=\val_3(N)\geq 3$. In this case, $3 \leq n \leq 5$. Assume first that $n=4$. In 
 this case, the quadratic extension $L_3/\Q_3$ is ramified. Up to replacing $L_3$ by the other quadratic ramified 
 extension, we might assume that $K_3\not\simeq L_3$. Then from case (1e) in \S \ref{appA} we see that 
 $\mathcal E_p(m,n)\neq\emptyset$ if and only if $m \leq 1$. If $m>1$, then we take $n':=2(m+1)$, and 
 again case (1e) in \S \ref{appA} tells us that $\mathcal E_p(m,n')\neq\emptyset$.
 
 Suppose now that $n$ is either $3$ or $5$. Then case (1b) in \S \ref{appA} shows that $\mathcal E_p(m,n)\neq\emptyset$ 
 if and only if $m = (n-1)/2$. Provided that $m\geq (n-1)/2$, defining $n' := 2m + 1 \geq n$ we obtain $\mathcal E_p(m,n')\neq\emptyset$ 
 as we want. Thus we need to prove that $m\geq (n-1)/2$. Since $3$ is ramified, we have $\eta_{K,3}(-1)=-1$, and therefore the 
 hypothesis that $3$ belongs to $\Sigma(E,\chi)$ implies that $\varepsilon_3(E/K,\chi) = 1$. Besides, we know that $\pi_{E,3}$ is 
 induced from a quasicharacter of conductor $n-1$ of a ramified quadratic extension of $\Q_3$. By \cite[Prop. 2.8]{Tunnell}, 
 $\varepsilon_3(E/K,\chi) = 1$ then implies that $m\geq (n-1)/2$ as we wanted.
\end{enumerate}

Now we deal with the case $p=2$. By Assumption \ref{assumption2N}, if it were $\val_2(N)>2$ then $2$ should be inert in $K$, thus 
we only need to consider the case $n=\val_2(N)=2$. By cases (2c), (2d) or (2e) in \S \ref{appA} we have that 
$\mathcal E_p(m,n)\neq\emptyset$ if and only if $m=0$. Notice first that $\pi_{E,p}$ is of minimal conductor among its twists, 
since supercuspidal representations have conductor $\geq 2$. Then, by virtue of \cite[Proposition 3.5]{Tunnell}, we see that 
$\varepsilon_p(E/K,\chi)$ is $+1$ (resp. $-1$) if and only if $m\geq 2$ (resp. $m < 2$). In view of this, if $\eta_{K,2}(-1) = 1$, 
the hypothesis that $2 \in \Sigma(E,\chi)$ implies that $m<2$. Since $m$ cannot be $1$, we deduce that $m=0$, and hence 
$\mathcal E_p(m,n)\neq\emptyset$ as desired. In contrast, assume that $\eta_{K,2}(-1) = -1$. Then it follows that $m\geq 2$. 
By defining $n':=2(m+1)\geq n$, we see by \S \ref{appA} (case (2c), (2d) or (2e)) that $\mathcal E_p(m,n')\neq\emptyset$.
\end{proof}

Next we consider the Steinberg case. Write $\pi_{E,p}=\mathrm{Sp}_2\otimes\psi$ where $\psi:W_{\Q_p}^\mathrm{ab}\rightarrow\C^\times$ 
is a quadratic character. By \cite[Prop. 1.7]{Tunnell}, we know that $\varepsilon(E,\chi)=-1$ if and only if 
$\chi_p^{-1} = \psi\circ\mathrm{Nr}$, where $x\mapsto \mathrm{Nr}(x)$ is the norm map from $K_p=K\otimes_\Q\Q_p$ to $\Q_p$.

\begin{lemma}\label{lemma:St-inert}
If $p\in\Sigma(E,\chi)$, $\pi_E$ is Steinberg at $p$ and $p$ is inert in $K$ then there 
exists $n'\geq n$ such that $\mathcal E_p(m,n')\neq\emptyset$. 
\end{lemma}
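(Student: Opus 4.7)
The plan is to split into cases according to $n := \val_p(N)$, using the Steinberg assumption together with part (3) of Assumption~\ref{assumption2N} to restrict the possibilities, and then either to take the simple choice $n' = 2m+1$ or, when this is incompatible with $n' \geq n$, to pin down $m = \val_p(c)$ via Tunnell's root-number criterion.

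Since $p$ is inert in $K$, the field $K_p$ is the unramified quadratic extension of $\Q_p$. The Steinberg condition at $p$ forces $n \in \{1,2\}$ when $p$ is odd, and a priori $n \in \{1,4,6\}$ when $p=2$; Assumption~\ref{assumption2N}~(3), which requires $\pi_{f,2}$ to have minimal Artin conductor among its twists, excludes the values $\val_2(N)\in\{4,6\}$, which by the classification just before Proposition~\ref{propJL} correspond precisely to forms that are twists of lower-level forms. Hence at $p=2$ only $n=1$ survives.

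In the case $n=1$, the Hijikata--Pizer--Shemanske datum at $p$ is $(L_p,\nu_p)=(L_p^{\mathrm{unr}},1)$, so $\mathcal{R}\otimes_{\Z}\Z_p$ is the maximal order. Setting $n':=2m+1\geq n$ keeps $L_p$ unramified, and then case (1a) of \S\ref{appA} with $\varrho=m$ (for $p$ odd), or case (2a) (for $p=2$, $m=0$), or case (2f) with $\varrho=m$ (for $p=2$, $m\geq 1$), immediately yields $\mathcal{E}_p(m,n')\neq\emptyset$.

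The main obstacle lies in the case $n=2$, which occurs only for $p$ odd, because the natural choice $n'=2m+1$ may violate $n'\geq n$ when $m=0$, and case (1c) of \S\ref{appA} forces $m$ to equal the single value $\varrho=n'/2$. The key input to overcome this is the root-number criterion \cite[Prop.~1.7]{Tunnell}: writing $\pi_{E,p}=\mathrm{Sp}_2\otimes\psi$ with $\psi$ a ramified quadratic character of $\Q_p^\times$ of conductor $p$, the equality $\chi_p^{-1}=\psi\circ\mathrm{Nr}$ together with the surjectivity of $\mathrm{Nr}\colon\mathcal{O}_{K_p}^\times\twoheadrightarrow\Z_p^\times$ in the unramified case shows that $\psi\circ\mathrm{Nr}$ is nontrivial on $\mathcal{O}_{K_p}^\times$ but trivial on $1+p\mathcal{O}_{K_p}$. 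Hence $m=c(\chi_p)=1$ exactly, and case (1c) of \S\ref{appA} with $\varrho=1$ delivers $\mathcal{E}_p(1,2)\neq\emptyset$, so that $n'=n=2$ works.
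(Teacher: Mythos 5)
Your proof is correct, and for $p$ odd it is essentially the paper's own argument: in the $n=2$ case the identification $m=c(\chi_p)=c(\psi\circ\mathrm{Nr})=1$, via surjectivity of the norm for the unramified extension $K_p/\Q_p$, is exactly the step the authors use before applying case (1c) of \S\ref{appA}, and for $n=1$ your choice $n'=2m+1$ is a harmless variant of the paper's observation that $\psi$ unramified forces $m=0$. The genuine divergence is at $p=2$. The paper does \emph{not} invoke condition (3) of Assumption \ref{assumption2N} to discard $\val_2(N)\in\{4,6\}$; it treats these cases explicitly, using that there $n=\val_2(N_{\mathcal R_{\mathrm{min}}})$ is strictly smaller than $\val_2(N)$ (this lemma is flagged in the surrounding text as the only place where that discrepancy matters), computing $m=c(\psi)\in\{2,3\}$ from $\chi_2^{-1}=\psi\circ\mathrm{Nr}$, and then choosing $n'$ equal to $2m$ or $2m+1$ according to whether the local datum $L_2$ is ramified or not, via cases (2b) and (2f) of \S\ref{appA}. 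Your shortcut --- observing that a ramified quadratic twist of Steinberg never has minimal Artin conductor, so condition (3) renders these cases vacuous --- is logically valid, since Assumption \ref{assumption2N} is a standing hypothesis for this block of lemmas, and the conductor computation you give is the same mechanism the paper relies on. What the paper's longer route buys is that the lemma holds for Steinberg primes at $2$ without condition (3) at all, which is consistent with the authors' claim that condition (3) is used ``here, and here only'' in the supercuspidal $n=7$ case of an earlier lemma; what your route buys is brevity, at the cost of making this lemma depend on condition (3) and of leaving untreated precisely the cases for which the caveat $n\neq\val_2(N)$ stated just before Lemma \ref{lemma:St-inert} was introduced.
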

\begin{proof}
By the above discussion, $p\in\Sigma(E,\chi)$ if and only if $\chi_p^{-1} = \psi\circ\mathrm{Nr}$. We assume 
first that $p$ is odd, so that $n=\val_p(N)$ can be either $1$ or $2$. We split the discussion into subcases:

\begin{enumerate}
 \item $n=1$. Comparing with \S \ref{appA} (1a), we see that $\mathcal E_p(m,n)\neq\emptyset$ if and 
 only if $m=0$. On the other hand, $\psi$ is unramified and therefore if $p\in\Sigma(E,\chi)$ then $m=0$. 
 \item $n=2$. Looking now at \S \ref{appA} (1c), $\mathcal E_p(m,n)\neq\emptyset$ if and only if $m=1$. On 
 the other hand, $\psi$ is ramified with conductor equal to $1$, and therefore if $p\in\Sigma(E,\chi)$ then $m=1$.
\end{enumerate}

Assume now that $p=2$. Then $\val_2(N) \in \{1,4,6\}$.
 \begin{itemize}
 
 \item[i)] If $n=\val_2(N)=1$, the character $\psi$ is unramified, and then since $2 \in \Sigma(E,\chi)$ we 
 deduce that $m=0$. On the other hand, by case (2a) in \S \ref{appA} we also have 
$\mathcal E_p(m,n)\neq\emptyset$ if and only if $m=0$. Thus $\mathcal E_p(m,n)\neq\emptyset$ as we want. 
 
 \item[ii)] If $\val_2(N)=4$, then $\psi$ is ramified with conductor $2$. Since $\chi_p^{-1} = \psi \circ\mathrm{Nr}$, 
 it follows that also $m=2$. By the discussion before Proposition \ref{propJL}, $n=\val_2(N_{\mathcal R})$ 
 can be either $1$, $2$ or $3$. If $n$ is either $1$ or $3$, defining $n':=5 \geq n$ we see by case (2f) 
 in \S \ref{appA} that $\mathcal E_2(m,n')\neq \emptyset$. And if $n=2$, by case (2b) in \S \ref{appA} we 
 conclude that for $n':= 2m = 4 \geq n$ we have $\mathcal E_2(m,n')\neq \emptyset$.

 \item[iii)] If $\val_2(N)=6$, then $\psi$ is ramified with conductor $3$. Similarly as before, we deduce 
 that $\chi_p$ has also conductor $m=3$. From the discussion before Proposition \ref{propJL}, we know that 
 the possible values for $n=\val_2(N_{\mathcal R})$ are $1$, $2$, $3$, $4$ or $5$. When $n$ is either $1$, $3$ 
 or $5$, $L_2$ is the unramified quadratic extension of $\Q_2$, and defining $n':= 2m+1 = 7 \geq n$, case (2f) 
 in \S \ref{appA} implies that $\mathcal E_2(m,n')\neq \emptyset$. Finally, if $n=2$ or $n=4$, then we 
 may define $n':= 2m = 6 \geq n$ and now case (2b) shows that $\mathcal E_2(m,n')\neq \emptyset$. 
\end{itemize}
This concludes the proof.
\end{proof}


\begin{lemma}
If $p\in\Sigma(E,\chi)$, $\pi_E$ is Steinberg at $p$ and $p$ is ramified in $K$ then there exists $n'\geq n$ 
such that $\mathcal E_p(m,n')\neq\emptyset$. 
\end{lemma}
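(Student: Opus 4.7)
The plan is to mirror the strategy of the three preceding lemmas. Since $\pi_{E,p} = \mathrm{Sp}_2 \otimes \psi$ is Steinberg, the condition $p \in \Sigma(E,\chi)$ is equivalent, via the criterion recalled just before Lemma \ref{lemma:St-inert}, to the identity
\[
\chi_p^{-1} = \psi \circ \mathrm{Nr}_{K_p/\Q_p}.
\]
The first step is to exploit this identity to pin down the conductor $m = \val_p(c)$ purely in terms of $\psi$. Since $\psi$ is a quadratic character of $\Q_p^\times$, its conductor is at most $1$ when $p$ is odd and at most $3$ when $p = 2$, so $n = \val_p(N)$ lies in $\{1,2\}$ when $p$ is odd and in $\{1,4,6\}$ when $p = 2$.

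Next I would treat the odd prime case by splitting on $n$. For $n = 1$, $\psi$ is unramified, so $\psi \circ \mathrm{Nr}$ is trivial on $\cO_{K_p}^\times$ (because $\mathrm{Nr}(\cO_{K_p}^\times) \subseteq \Z_p^\times$ and $\psi$ kills $\Z_p^\times$); hence $m = 0$, and case (1b) of \S\ref{appA} with $\varrho = 0$ gives $\mathcal E_p(0,1) \neq \emptyset$, so one may take $n' = n$. For $n = 2$, $\psi$ is ramified of conductor $1$ and, by local class field theory, coincides with the quadratic character $\eta_F$ attached to one of the two ramified quadratic extensions $F$ of $\Q_p$. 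If $F \simeq K_p$, the Hasse norm relation $\mathrm{Nr}(K_p^\times) = \ker \eta_{K_p}$ forces $\psi \circ \mathrm{Nr} = 1$, so $m = 0$, and case (1e) of \S\ref{appA} applied with $L_p \simeq K_p$ (a legitimate choice, since $R_2(L_p)$ does not depend on $L_p$ for odd $p$, as noted in Section 1.2) yields $\mathcal E_p(0,2) \neq \emptyset$. If $F \not\simeq K_p$, a short local computation based on $\mathrm{Nr}(1 + \pi_K \cO_{K_p}) \subseteq 1 + p\Z_p$ shows that $\chi_p$ has conductor exactly $m = 1$; then setting $n' = 2(m+1) = 4$ and invoking case (1d) or (1e) of \S\ref{appA} with $\varrho = 2$ (depending on which $L_p$ is chosen) gives $\mathcal E_p(1,4) \neq \emptyset$.

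For $p = 2$, the situation should collapse thanks to Assumption \ref{assumption2N} ii): since $2 \in \Sigma(E,\chi)$ forces $\Delta$ to be even, and $2$ is ramified in $K$ by hypothesis, that assumption rules out $2^3 \mid N$; combined with the Steinberg constraint $n \in \{1,4,6\}$, only $n = 1$ survives. There $\psi$ is unramified, $m = 0$ by the same reasoning as in the odd $n = 1$ case, and case (2a) of \S\ref{appA} gives $\mathcal E_2(0,1) \neq \emptyset$ with $n' = n$. The main obstacle I anticipate is the subcase $n = 2$, $F \not\simeq K_p$ above: one has to compute carefully the exact conductor of $\psi \circ \mathrm{Nr}_{K_p/\Q_p}$ on $K_p^\times$ and then check that the Hijikata--Pizer--Shemanske tables at level $p^4$ align with the value $m = 1$. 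Beyond that, every remaining step reduces to a table lookup in \S\ref{appA}.
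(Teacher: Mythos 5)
There is a genuine gap, and it sits at the very first step. You assert that, because $\pi_{E,p}$ is Steinberg, the condition $p\in\Sigma(E,\chi)$ is \emph{equivalent} to $\chi_p^{-1}=\psi\circ\mathrm{Nr}$. That equivalence is only valid when $\eta_{K,p}(-1)=1$ (e.g.\ in the inert case, which is why it works in Lemma \ref{lemma:St-inert}). Here $p$ is \emph{ramified} in $K$, and membership in $\Sigma(E,\chi)$ means $\varepsilon_p(E/K,\chi)=-\eta_{K,p}(-1)$; since Tunnell's criterion says $\varepsilon_p(E/K,\chi)=-1$ iff $\chi_p^{-1}=\psi\circ\mathrm{Nr}$, for an odd ramified $p\equiv 3\bmod 4$ (and for $p=2$ ramified with $\eta_{K,2}(-1)=-1$) the condition $p\in\Sigma(E,\chi)$ is instead $\chi_p^{-1}\neq\psi\circ\mathrm{Nr}$. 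That inequality puts no upper bound whatsoever on $m=\val_p(c)$, so your deductions ``$m=0$ when $n=1$'', ``$m\in\{0,1\}$ when $n=2$'', and ``$m=0$ when $p=2$, $n=1$'' simply fail for an entire family of characters: $\chi_p$ may have arbitrarily large conductor, and your argument then produces no $n'$ at all for those $\chi$.

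The paper's proof sidesteps the issue by never trying to pin down $m$: it takes $n'$ as a function of $m$ and reads off the tables. Concretely, for $p$ odd and $n=1$ one takes $n'=2m+1$ and uses case (1b) of \S\ref{appA} (which requires exactly $m=\varrho$ with $n'=2\varrho+1$); for $n=2$ and $m>0$ one takes $n'=2(m+1)$ and uses case (1d) or (1e) (which require $m=\varrho-1$, resp.\ $m\leq\varrho-1$); and for $p=2$, $n=1$ with $\eta_{K,2}(-1)=-1$ one takes $n'=2m+1$ and uses case (2g). Your correct observation that the sub-cases with $\chi_p^{-1}=\psi\circ\mathrm{Nr}$ force small $m$ is fine as far as it goes, but it is both incomplete (it misses the $\chi_p^{-1}\neq\psi\circ\mathrm{Nr}$ branch) and unnecessary, since the ``raise the level $n'$ to match $m$'' device handles every value of $m$ uniformly. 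To repair your proof you should drop the attempted computation of $m$ and instead verify, for each residue of $n'$ mod $2$, which ramified-$K$ entry of \S\ref{appA} is nonempty for the given $m$.
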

\begin{proof} 
Suppose first that $p$ is odd, so that $n=\val_p(N)$ is either $1$ or $2$. By the above discussion, 
$p\in\Sigma(E,\chi)$ if and only if either $\chi_p^{-1} = \psi\circ\mathrm{Nr}$ and $p\equiv 1\mod 4$, or 
$\chi_p^{-1} \neq \psi\circ\mathrm{Nr}$ and $p\equiv 3\mod 4$. We have the following cases:

\begin{enumerate}
\item $n = 1$. If $m=0$, by case (1b) in \S \ref{appA} we see that $\mathcal E_p(m,n)\neq\emptyset$. 
Otherwise, we can take $n':=2m+1$, and again case (1b) in \S \ref{appA} implies $\mathcal E_p(m,n')\neq\emptyset$. 

\item $n = 2$. Again, for $m=0$ we have $\mathcal E_p(m,n)\neq\emptyset$ by case (1d) or (1e) in \S \ref{appA}. If 
instead $m > 0$, then we consider $n':=2(m+1)$ and by applying \S \ref{appA}, case (1d) or (1e), we see that 
$\mathcal E_p(m,n')\neq\emptyset$.
\end{enumerate}

Now assume that $p=2$. Similarly as in the previous lemma, by Assumption \ref{assumption2N} we only need to deal with 
the case $n=\val_2(N)=1$. Then notice that $\psi$ is unramified. On the other hand, now the hypothesis that $2$ belongs 
to $\Sigma(E,\chi)$ implies that either $\eta_{K,2}(-1) = 1$ and $\chi_p^{-1} = \psi\circ\mathrm{Nr}$ or $\eta_{K,2}(-1) = -1$ 
and $\chi_p^{-1} \neq \psi\circ\mathrm{Nr}$. Having this into account, if $\eta_{K,2}(-1) = 1$ then the equality 
$\chi_p^{-1} = \psi\circ\mathrm{Nr}$ implies that $m=0$. By case (2a) in \S \ref{appA} it thus follows that 
$\mathcal E_p(m,n)\neq\emptyset$. And if $\eta_{K,2}(-1) = -1$, it could be the case that $m>0$. But in any case, defining 
$n':=2m+1 \geq n=1$ case (2g) in \S \ref{appA} implies that $\mathcal E_p(m,n')\neq\emptyset$.
\end{proof}

Combining the above lemmas, we obtain the following: 

\begin{thm}\label{main_thm}
Let $E/\Q$ be an elliptic curve of conductor $N$, $K$ be an imaginary quadratic field and $\chi$ be an 
anticyclotomic character of conductor $c$. Suppose that the set $\Sigma(E,\chi)$ has odd cardinality, so 
that $\varepsilon(E/K,\chi)=-1$ and hence $L(E/K,\chi,1)=0$. 
If Assumption \ref{assumption2N} holds, then the set of Heegner points in $E(H_c)$ is non-empty. And if 
further $E$ does not acquire CM over any imaginary quadratic field contained in $H_c$ 
and $L'(E/K,\chi,1)\neq 0$, then $\dim_{\C}\left(E(H_c)\otimes \C_{\Z} \right)^{\chi}=1$.
\end{thm}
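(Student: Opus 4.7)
The plan is to combine the case-by-case local analyses carried out in the five lemmas that precede the theorem with the parametrization of Proposition \ref{propJL} and the Kolyvagin-type result of Nekov\'a\v r recalled in Theorem \ref{nekovar}.

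First, I would start from the order $\cR_\mathrm{min}$ of Proposition \ref{propJL}, which is tailored so that the newform $f\in S_2(\Gamma_0(N))$ attached to $E$ admits a Jacquet--Langlands lift to $S_2(\cR_\mathrm{min})$; this provides a modular parametrization $J_{\cR_\mathrm{min}}\twoheadrightarrow E$ defined over $\Q$. For any suborder $\cR\subseteq\cR_\mathrm{min}$ in $B$, the natural covering $X_\cR\to X_{\cR_\mathrm{min}}$ induces by composition a parametrization $J_\cR\twoheadrightarrow E$. The existence of Heegner points of conductor $c$ on $X_\cR$ is, by the discussion of Section 3, equivalent to the non-emptiness of $\Emb^{\op}(R_c,\cR)$, which by \eqref{opt-emb} reduces to a purely local question: it suffices to check that the local set $\mathcal E_p(m,n_p')$ is non-empty for every prime $p\mid N$, where $m=\val_p(c)$ and $n_p'$ is the $p$-adic valuation of the level of $\cR$.

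The bulk of the proof is then nothing more than assembling the five lemmas. For each prime $p\mid N$ I would split according to whether $p\notin\Sigma(E,\chi)$ (first lemma), or $p\in\Sigma(E,\chi)$ with $\pi_{E,p}$ supercuspidal (next two lemmas, according to whether $p$ is inert or ramified in $K$), or $p\in\Sigma(E,\chi)$ with $\pi_{E,p}$ Steinberg (last two lemmas, again split by the splitting behaviour of $p$). In each case the relevant lemma exhibits an integer $n_p'\geq \val_p(N_{\cR_\mathrm{min}})$ for which $\mathcal E_p(m,n_p')\neq\emptyset$, crucially \emph{without} enlarging the conductor $c$. One then takes $\cR$ to be the suborder of $\cR_\mathrm{min}$ whose $p$-adic component is the Hijikata--Pizer--Shemanske or Eichler order of level $p^{n_p'}$ prescribed by the lemma at every $p\mid N$, and left equal to $\cR_\mathrm{min}$ at primes $\ell\nmid N$. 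This gives an order $\cR\subseteq\cR_\mathrm{min}$ of some type $T$ into which $R_c$ embeds optimally, hence $\mathrm{Heeg}(\cR,K,c)$ is non-empty and the corresponding images in $\mathrm{Heeg}_E(\cR,K,c)\subseteq E(H_c)$ settle part (1).

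The main obstacle, and the reason for the lengthy case-by-case analysis encoded in the preceding lemmas, is the prime-by-prime matching of two a priori unrelated pieces of data: on the one hand the local sign $\varepsilon_p(E/K,\chi)$ (which determines membership in $\Sigma(E,\chi)$), computed via Tunnell's formulas and the Deligne--Langlands recipes, and on the other hand the combinatorial criteria for the non-emptiness of $\mathcal E_p(m,n)$, read off from the tables in Section \ref{opt-emb-sec}. Assumption \ref{assumption2N} is designed precisely to exclude the narrow ranges at $p\in\{2,3\}$ where these two pieces of data refuse to be compatible; outside those ranges, compatibility is forced, as the five lemmas show. Finally, part (2) follows at once: the Heegner point produced on $X_\cR$ together with the parametrization $J_\cR\twoheadrightarrow E$ furnishes exactly the data required to apply Theorem \ref{nekovar} with $A=E$ and $d=1$, yielding $\dim_\C(E(H_c)\otimes_\Z\C)^\chi=1$ under the stated CM-avoidance hypothesis and the non-vanishing of $L'(E/K,\chi,1)$.
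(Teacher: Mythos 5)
Your proposal is correct and follows essentially the same route as the paper: assemble the five preceding lemmas to produce, prime by prime, a suborder $\cR\subseteq\cR_{\mathrm{min}}$ admitting an optimal embedding of $R_c$ (without enlarging the conductor $c$, which is the key improvement over Theorem \ref{Main_Thm1}), note that $J_{\cR}$ still uniformizes $E$, and conclude via Theorem \ref{nekovar}. The paper's own proof is just a terser version of exactly this argument.
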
 
\begin{proof}
Let $B$ be the indefinite quaternion algebra ramified exactly at the finite primes in $\Sigma(E,\chi)$, 
and let $\cR_{\mathrm{min}}$ be the order in $B$ from Proposition \ref{propJL}. The above lemmas together 
imply that there is a suborder $\cR$ of $\cR_{\mathrm{min}}$ such that the set of Heegner points of 
conductor $c$ in $X_{\cR}(H_c)$ is non-empty. The Jacobian of $X_{\cR}$ uniformizes $E$ as well, hence 
the set of Heegner points of conductor $c$ in $E(H_c)$ is non-empty. By Theorem \ref{nekovar}, if $E$ 
does not acquire CM over any imaginary quadratic field contained in $H_c$, then 
$\dim_{\C}\left(E(H_c)\otimes \C_{\Z} \right)^{\chi}=1$.
\end{proof} 

We state now the above result in more restrictive, but maybe more attractive form, introducing 
a couple of definitions. 

\begin{defn}We say 
that a form $f\in S_2(\Gamma_0(N))$ is \emph{primitive} if $f\neq g\otimes\chi$ for any 
Dirichlet character $\chi$ and any $g\in S_2(\Gamma_0(M))$ with $M\mid N$ and $M\neq N$ 
(see \cite[Def. 8.6 ]{Pizer} for a similar terminology). 
\end{defn}
 
\begin{defn} Let $p$ be a prime. 
We say that $f\in S_2(\Gamma_0(N))$ \emph{has $p$-minimal Artin conductor} if the $p$-component $\pi_{f,p}$ 
of the automorphic representation $\pi_f$ attached to $f$ has minimal conductor among its twists by 
quasi-characters of $\Q_p^\times$; in other words, if we write $a(\pi_f)$ for the Artin conductor of the 
automorphic representation $\pi_f$, we require that $a(\pi_f)\leq a(\pi_f\otimes\chi)$ 
for all quasi-characters $\chi$ of $\Q_p^\times$.
\end{defn}

\begin{corol} \label{coro-final}
Let $E/\Q$ be an elliptic curve of conductor $N$, $K$ be an imaginary quadratic field and $\chi$ be an 
anticyclotomic character of conductor $c$. Suppose that the set $\Sigma(E,\chi)$ has odd cardinality. 
If the following conditions hold: 
\begin{enumerate}
\item $f$ is a primitive form; 
\item $f$ has $2$-minimal Artin conductor;
\item If $\val_3(\Ne) = 4$ and $3$ is inert in $K$, then $\val_3(c)\neq 1$;
\item If $\val_2(\Ne)\geq 3$, then $2$ is not ramified in $K$;
\end{enumerate}
then the set of Heegner points in $E(H_c)$ is non-empty. If further $E$ does not acquire CM over 
any imaginary quadratic field contained in $H_c$ and $L'(E/K,\chi,1)\neq 0$, then $\dim_{\C}\left(E(H_c)\otimes \C_{\Z} \right)^{\chi}=1$.
\end{corol}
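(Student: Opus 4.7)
The plan is to deduce Corollary \ref{coro-final} as a direct consequence of Theorem \ref{main_thm} by checking that hypotheses (1)--(4) of the corollary imply Assumption \ref{assumption2N}. Once this reduction is carried out, the existence of Heegner points in $E(H_c)$ and the BSD-type dimension statement both follow at once from Theorem \ref{main_thm}; the dimension equality itself is Nekov\'a\v r's Theorem \ref{nekovar} applied to the parametrization $J_\cR \twoheadrightarrow E$ furnished by that theorem.

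Two of the items of Assumption \ref{assumption2N} match corollary hypotheses verbatim: condition (2) of the corollary (``$2$-minimal Artin conductor'') is exactly item (3) of the Assumption, and condition (3) of the corollary coincides with item (4) of the Assumption. So the substantive task is to extract items (1) and (2) of the Assumption from the primitivity hypothesis (1) of the corollary together with the non-ramification hypothesis (4).

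The decisive input is \cite[Theorem 3.9]{HPS2}: any newform $f \in S_2(\Gamma_0(N))$ with $\val_2(N)$ even and $\geq 4$ is a quadratic twist of a newform of strictly smaller $2$-adic level, hence is non-primitive. Under primitivity one therefore has $\val_2(N) \in \{0,1,2,3\}$ or $\val_2(N)$ odd. To obtain Assumption \ref{assumption2N}(1), suppose $2^3 \mid \Ne$; since $\Ne$ is coprime to $\Delta$ this forces $2 \nmid \Delta$, so Remark \ref{remarkJL} together with the $2$-minimal Artin conductor hypothesis gives $\val_2(\Ne) = \val_2(N) \geq 3$. Corollary condition (4) then states that $2$ is not ramified in $K$, i.e.\ $2$ is either split or inert; in the inert case the HPS parity restriction forces $\val_2(\Ne)$ odd, which is precisely the required dichotomy. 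To obtain Assumption \ref{assumption2N}(2), assume $\Delta$ is even and $2^3 \mid N$. Since $2 \mid \Delta$, the local field $K_2$ must embed in the division algebra $D_2$, so $2$ cannot split in $K$. The remaining content (inertia of $2$ in $K$ and the odd-parity conclusion when $\pi_{E,2}$ is supercuspidal) again follows by combining primitivity with the Jacquet--Langlands classification recalled before Proposition \ref{propJL} and the HPS theorem.

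The main (modest) obstacle is the case-by-case bookkeeping at $p=2$: one must verify that, under the three structural hypotheses (primitivity, $2$-minimal Artin conductor, and non-ramification at $2$ when $\val_2(\Ne) \geq 3$), every local configuration of $\pi_{E,2}$ (supercuspidal versus Steinberg, with its various possible conductors) and of the behaviour of $2$ in $K$ falls within the scope of Assumption \ref{assumption2N}, so that the local lemmas used in the proof of Theorem \ref{main_thm} apply. Once this verification is complete, Theorem \ref{main_thm} produces the order $\cR$, the Heegner point on $X_\cR(H_c)$ and its image in $E(H_c)$, and Theorem \ref{nekovar} delivers $\dim_\C (E(H_c)\otimes \C)^\chi = 1$ under the additional CM-avoidance and non-vanishing hypothesis.
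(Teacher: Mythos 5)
Your proposal is correct and follows essentially the same route as the paper: the paper's own proof is a one-line reduction to Theorem \ref{main_thm}, observing that items (1) and (2) of Assumption \ref{assumption2N} follow from primitivity via \cite[Thm.~3.9]{HPS2} (with the remaining items matching the corollary's hypotheses verbatim). Your write-up merely spells out this verification in more detail than the paper does.
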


\begin{proof}
We only need to remark that conditions i) and ii) in Assumption \ref{assumption2N} 
are satisfied if we ask that $f$ is primitive by \cite[Thm. 3.9]{HPS2}. 
\end{proof}

\subsection{Final remarks} \label{final-remarks}

It might be interesting to discuss how to extend the above theorem to the general case of abelian varieties. One can 
easily show that if $A/\Q$ is a modular variety of dimension $d$ and conductor $N^d$, and no prime divides $N$ to a 
power greater than $3$, then the argument for elliptic curves developed in the previous section also works for 
abelian varieties. However, it is easy to construct examples in which we do not have Heegner points in any cover of $X_{\cR_\mathrm{min}}$ if we allow the conductor of $A$ to be divisible by arbitrary powers of $p$ if we just consider 
orders of type $(\Ne;\Nc;\{(L_p,\nu_p)\})$, as the following examples show: 

\begin{example} \label{example1}
Let $A/\Q$ be a modular abelian variety of conductor $N^d$, and suppose $N=p^nq$ with $p$ and $q$ distinct 
odd primes and $n=2\varrho+1$ an odd integer. Let $\chi$ be a character of conductor $p^m$ with $m\geq 1$. 
Suppose that $p$ is ramified in and $q$ is inert in $K$. Then $q\in\Sigma(A,\chi)$, and $p\in\Sigma(A,\chi)$ 
if and only if 
\[
\varepsilon_p(A,\chi)=-\eta_{K,p}(-1)=-(-1)^{(p-1)/2}.
\] 
Now assume that $n$ is minimal among the conductor of all twists of $\pi_{A,p}$. In this case 
\cite[Prop. 3.2]{Tunnell} shows that if $m\leq n-1$ then $\varepsilon_p(A,\chi)=-1$, so if 
$p\equiv 3\mod 4$ then $p\in\Sigma(A,\chi)$. If now $m < (n-1)/2$, then comparing with \S \ref{appA} 
we see that there are no Heegner points of conductor $p^m$ in any cover of $X_{\cR_\mathrm{min}}$ 
associated with an order as in Definition \ref{def-ord}.
\end{example}

\begin{example}  \label{example2}
As in the above example, let $A/\Q$ be a modular abelian variety of conductor $N^d$, and suppose now that $N=p^nq$ 
with $p$ and $q$ two odd primes and $n=2\varrho\geq 4$ an even integer. Let $\chi$ be a character of conductor $p^m$ 
with $m\geq 1$. Suppose first that $p\in\Sigma(A,\chi)$, so $\varepsilon_p(A,\chi)=-1$, and $q$ is inert in $K$, so 
$q\in\Sigma(A,\chi)$. Consider the quaternion algebra $B$ of discriminant $pq$ and the order $\cR_\mathrm{min}$ of 
$B$ and form the corresponding Shimura curve $X_{\cR_\mathrm{min}}$. From \S \ref{appA}, we see that if 
$m<n/2$, then there are no Heegner points of conductor $p^m$ in any covering of $X_{\cR_\mathrm{min}}$ associated 
with special orders as in Definition \ref{def-ord}. Secondly, suppose $p\not\in\Sigma(A,\chi)$, so 
$\varepsilon_p(A,\chi)=+1$ and $q$ is split in $K$, so $q\not\in\Sigma(A,\chi)$. In this case, if $m<n/2$ then again 
there are no Heegner points of conductor $p^m$ in any cover of the Shimura curve $X_{\cR_\mathrm{min}}$ associated 
with Eichler orders (which, in this case, correspond to modular curves and usual congruence subgroups). 
\end{example}

As we may see from the above examples, it seems to us that that one should introduce more general type of orders 
to find other sources of Heegner points defined over the predicted ring class field. 

\begin{conjecture}\label{conjecture}
Let $A/\Q$ be a modular abelian variety, $K$ be an imaginary quadratic field, $\chi$ be an anticyclotomic character 
factoring through the ring class field $H_c$ of $K$ of conductor $c\geq 1$, and suppose that the cardinality of 
$\Sigma(A,\chi)$ is odd. Let $B$ denote the indefinite quaternion algebra of discriminant $\Delta$ equal to the product 
of the finite primes in $\Sigma(A,\chi)$, and $f$ be the newform associated with $A$. Suppose that $f$ is primitive 
and has $p$-minimal Artin conductor, for all primes $p$. 
Then, there exists an open compact subgroup $U$ in $\hat B^\times$ equipped with a surjective morphism 
$J_U \twoheadrightarrow A$ and such that the set of Heegner points in $X_{U}(H_c)$ is non-empty.
\end{conjecture}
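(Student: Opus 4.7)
The plan is to extend the local analysis carried out in the proof of Theorem \ref{main_thm} by allowing, at each prime $p \mid N$, more general local open compact subgroups $U_p \subseteq B_p^{\times}$ than those arising from the Hijikata--Pizer--Shemanske orders of Definition \ref{def-ord}. Examples \ref{example1} and \ref{example2} isolate exactly the obstruction to the naive approach: when $\val_p(c)$ is strictly smaller than what any suborder $R_{\nu}(L_p)$ or Eichler suborder of the local maximal order can accommodate, the set of local optimal embeddings of $R_c \otimes_{\Z} \Z_p$ is forced to be empty, even though the local root number condition requires $p\in\Sigma(A,\chi)$ (or requires $p\nmid\Delta$ with a deep level at $p$). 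The hypotheses that $f$ is primitive and $p$-minimal for every $p$ allow one to avoid the twist reductions of \cite[Thm.~3.9]{HPS2} that we could not handle in Corollary \ref{coro-final}.

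First, I would redo the root number analysis prime by prime, as in the lemmas preceding Theorem \ref{main_thm}, using Tunnell's formulas and \cite[(5.5.1)]{Deligne2}. Primitivity and $p$-minimality eliminate the non-minimal supercuspidal cases, so for each $p\mid N$ this step outputs a tight numerical constraint relating $\val_p(c)$, $\val_p(N)$ and the local type of $\pi_{f,p}$ (principal series, Steinberg, or supercuspidal, together with the quadratic/cubic data determining it). The goal of the rest of the argument is then to construct a local subgroup $U_p$ matching these constraints.

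Second, at each prime $p\mid N$ I would construct $U_p\subseteq B_p^{\times}$ with the following two features: (i) it contains the image of an embedding of $R_c\otimes_{\Z}\Z_p$, which is the prerequisite for the existence of a Heegner point of conductor $c$ on $X_U$ via the description in \S 3; and (ii) the space of $U_p$-fixed vectors in the local automorphic representation $\pi_{A,p}$ (equivalently, in its Jacquet--Langlands transfer to $B_p^{\times}$) is non-zero, so that $f$ admits a quaternionic lift of level $U$. The construction relies on the theory of types of Bushnell--Kutzko (at split primes) and of Broussous--S\'echerre (in the division algebra case): one takes for $U_p$ the group generated by a congruence subgroup deep enough to contain a type for $\pi_{A,p}\otimes\chi_p^{-1}$, together with the image of the embedding of the quadratic order $R_c\otimes_{\Z}\Z_p$. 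The central character condition on $\chi$ (self-duality) guarantees that these two pieces are compatible with each other on their intersection. Setting $U=\prod_p U_p$, the multiplicity-one statement (Proposition \ref{mult1}) combined with the Jacquet--Langlands correspondence of Theorem \ref{thm:JL} gives a Hecke-equivariant surjection $J_U\twoheadrightarrow A$ over $\Q$, exactly as in the proof of Theorem \ref{Main_Thm1}; and the Shimura reciprocity law \eqref{GaloisCM} then ensures the Heegner point on $X_U$ is rational over $H_c$.

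The main obstacle will be step two in the division algebra case when $\pi_{A,p}$ is supercuspidal of minimal conductor and $\val_p(c)$ is small: the type-theoretic construction of $U_p$ must reconcile a fundamental stratum adapted to the optimal embedding of $R_c\otimes_{\Z}\Z_p$ with the minimal stratum of $\pi_{A,p}$, and it is not immediate that these two constraints are always simultaneously satisfiable without enlarging the type at the cost of losing $\pi_{A,p}$-fixed vectors. A careful case analysis, parallel to the supercuspidal lemmas proved above but using the classification of strata in $B_p$ rather than the HPS parametrisation, is required; the delicate interplay between the parity of $\val_p(N)$ and the ramification of $K_p/\Q_p$, already visible in Examples \ref{ex6.5} and \ref{ex6.6}, is what ultimately prevents us from upgrading the conjecture to a theorem in the present note.
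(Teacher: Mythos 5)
The statement you are trying to prove is stated in the paper as a \emph{conjecture} and is explicitly left open there: the authors remark only that it is ``inspired by Corollary \ref{coro-final}'' and that the essential difficulty is to exhibit open compact subgroups $U$ (not necessarily of the form $\hat{\mathcal R}^{\times}$) for which one still controls the rationality of the special points coming from embeddings $K\hookrightarrow B$. So there is no proof in the paper to compare yours against, and, more importantly, your proposal is not a proof either. You outline a reasonable strategy --- replace the Hijikata--Pizer--Shemanske level structures by more general subgroups $U_p$ built from type theory, chosen to (i) contain an optimal embedding of $R_c\otimes_{\Z}\Z_p$ and (ii) retain nonzero $\pi_{A,p}$-fixed vectors --- but you then concede in your final paragraph that it is ``not immediate that these two constraints are always simultaneously satisfiable.'' That simultaneous satisfiability \emph{is} the conjecture, in the precise cases isolated by Examples \ref{example1} and \ref{example2} (supercuspidal or deep Steinberg $\pi_{A,p}$ with $\val_p(c)$ small relative to $\val_p(N)$), where the paper shows that no order of the type in Definition \ref{def-ord} can work. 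A proposal that defers exactly this step has not closed the gap; it has restated it in the language of strata and types.

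Two further points would need attention even if the local construction were carried out. First, nonvanishing of $U_p$-fixed vectors in $\pi_{A,p}$ gives a quaternionic automorphic form of level $U$, but to conclude a surjection $J_U\twoheadrightarrow A$ over $\Q$ and then invoke the Euler-system machinery you also need the multiplicity and rationality statements (the analogues of Proposition \ref{mult1} and of the reciprocity law \eqref{GaloisCM}) for these non-standard level structures; for subgroups $U_p$ that are merely ``generated by a congruence subgroup and the image of $R_c\otimes_{\Z}\Z_p$'' it is not automatic that $U_p$ is open compact with a clean description of $N(U)$, of $\mathrm{Pic}$-type component sets, or of the Galois action on the special points --- this is precisely the ``good understanding of rationality questions'' the authors flag as the missing ingredient. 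Second, your appeal to the self-duality of $\chi$ to reconcile the type for $\pi_{A,p}\otimes\chi_p^{-1}$ with the embedded quadratic order is a global condition being used to force a local compatibility, and you give no argument for why it should do so. In short: the approach is a sensible research direction, consistent with the authors' own suggestion, but as written it establishes nothing beyond what Theorem \ref{main_thm} and Corollary \ref{coro-final} already give.
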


As a variant of the above conjecture, one can ask if we can take $U=\hat{\mathcal R}^\times$ for some 
global order $\mathcal R$ in $B$. 
This conjecture is inspired by Corollary \ref{coro-final}; we only point out that the relevant part in this conjecture 
is to show the existence of suitable open compact subgroups (non necessarily arising from global orders) so that we have a good understanding of rationality questions of points arising from embeddings $K\hookrightarrow B$. This would allow us to solve cases as \eqref{MissingCase} in the introduction and discuss Examples \ref{example1} and \ref{example2} above.

\providecommand{\bysame}{\leavevmode\hbox to3em{\hrulefill}\thinspace}
\providecommand{\MR}{\relax\ifhmode\unskip\space\fi MR }
\providecommand{\MRhref}[2]{%
  \href{http://www.ams.org/mathscinet-getitem?mr=#1}{#2}
}
\providecommand{\href}[2]{#2}

\end{document}